\newtheorem{lema}{Lemma}[section]
\newtheorem{theo}[lema]{Theorem}
\newtheorem{prop}[lema]{Proposition}
\newtheorem{coro}[lema]{Corollary}
\theoremstyle{definition}
\theoremstyle{remark}
\newtheorem{rema}[lema]{Remark}
\newcommand{\disc}{\operatorname{Disc}}
\newcommand{\sgn}{\operatorname{sgn}}
\definecolor{rojo}{rgb}{0.75,0,0}
\definecolor{oro}{rgb}{0.85, 0.65, 0.13}
\definecolor{azul}{rgb}{0.05, 0.05, 0.93}
\title[Stability of singular limit cycles]{Stability of singular limit cycles\\ for Abel equations revisited}
\author{J.L. Bravo, M. Fern\'{a}ndez, I. Ojeda}
\address{J.L. Bravo, Departamento de Matem\'{a}ticas,
Universidad de Extremadura, 06006 Badajoz, Spain}
\email{trinidad@unex.es}
\address{M. Fern\'{a}ndez, Departamento de Matem\'{a}ticas, Universidad de Extremadura, 06006 Badajoz, Spain}
\email{ghierro@unex.es}
\address{I. Ojeda, Departamento de Matem\'{a}ticas, Universidad de Extremadura, 06006 Badajoz, Spain }
\email{ojedamc@unex.es}
\subjclass[2010]{34C25}
\keywords{Periodic solution; Limit cycle; Abel equation}
\subjclass[2010]{Primary 34C25. Secondary: 34A34, 37C27, 37G15.}
\keywords{Abel equation, closed solution, periodic solution, limit cycle}
\begin{document}

\begin{abstract}
A criterion is obtained for the semi-stability of the isolated
singular positive closed solutions, i.e., singular positive limit
cycles, of the Abel equation $x'=A(t)x^3+B(t)x^2$, where $A,B$ are
smooth functions with two zeros in the interval $[0,T]$ and where
these singular positive limit cycles satisfy certain conditions, which
allows an upper bound on the number of limit cycles of the Abel
equation to be obtained.  The criterion is illustrated by obtaining
an upper bound of two positive limit cycles for the family
$A(t)=t(t-t_A)$, $B(t)=(t-t_B)(t-1)$, $t\in[0,1]$.  In the linear
trigonometric case, i.e., when $A(t)=a_0+a_1\sin t +a_2\cos t$,
$B(t)=b_0+b_1\sin t+b_2 \cos t$, an upper bound of two limit cycles is
also obtained for $a_0,b_0$ sufficiently small and in the
region where two positive limit cycles bifurcate from the origin.
\end{abstract}

\maketitle

%%%%%%%%%%%%%%%%%%%%%%%%%%%%%%%%%%%%%%%%%%%%%%%%%%%%%%%%%%%%%%%%%%
%%%%%%%%%%%%%%%%%%%%%%%%%%%%%%%%%%%%%%%%%%%%%%%%%%%%%%%%%%%%%%%%%%
\section{Introduction and Main Results}

We consider Abel equations
\begin{equation}\label{eq:Abel}
\frac{dx}{dt}=x'=A(t)x^3+B(t)x^2,
\end{equation}
with $A,B$ smooth functions defined on $[0,T]$. Let $u(t,x)$ denote
the solution of \eqref{eq:Abel} determined by $u(0,x)=x$. We say
$u(t,x)$ is closed or periodic if $u(T,x)=x$. Let $u(t,x)$ be
closed. It is singular or multiple if $u_x(T,x)=1$, otherwise it is
simple or hyperbolic.  Isolated closed solutions are also called limit
cycles. A singular closed solution such that $u_{xx}(T,x)\ne0$ is
called a double closed solution or a semistable limit cycle. The
problem of determining the maximum number of closed solutions of
\eqref{eq:Abel} is the ``Pugh problem'' mentioned by Smale~\cite{S}.

Notice that $x=0$ is always a closed solution of
\eqref{eq:Abel}. Therefore the number of closed solutions in regions
$x>0$ and $x<0$ can be studied separately. Since one region can be
translated to the other with the transformation $x\rightarrow -x$, we
shall restrict attention to the region $x>0$.

There are several results for uniqueness of closed solutions
of~\eqref{eq:Abel} on $x>0$. The best known impose that one of the
functions $A$ or $B$ does not change sign (see \cite{GG,GL,Ll,Panov,
  Pliss}). Other conditions, allowing $A$ and $B$ to change sign, are
considered for instance in \cite{AGG,BFG}. In all these results, the
condition of a definite sign is imposed on a certain derivative of the
return map  or on the initial conditions corresponding
to positive closed solutions. Applying these results
to~\eqref{eq:Abel}, one determines families for which there is at most
one positive closed solution.

A different approach is taken in \cite{BFG2} where, in order to obtain
 two positive closed solutions as upper bound, the Abel equation is
considered to be a member of a one-parameter family,
\begin{equation}\label{eq:Abelparametric}
x'=A(t,\lambda)x^3+B(t,\lambda)x^2, \quad \lambda\in\mathbb{R}
\end{equation}
where $F(t,x,\lambda):=A(t,\lambda)x^3+B(t,\lambda)x^2$ satisfies
$F_\lambda(t,x,\lambda)> 0$ for $x>0$. Thus, $\lambda \to
F(t,x,\lambda)$ is strictly increasing for all $t\in\mathbb{R}$,
$x>0$. This is termed monotonic with respect to $\lambda$.

Notice that the above definition of monotonic with respect to
$\lambda$ for families of Abel equations is an adaptation of the
setting of the so-called rotated families of planar vector fields
introduced by G.F.D. Duff, see \cite{Duff} or
\cite[Sec. 4.6]{Perko}. For these families of vector fields, the
control of bifurcations of double closed solutions is crucial to
understanding their global bifurcation diagram of closed solutions.
\medskip

We consider simple Abel equations for which there is no uniqueness of
positive closed solutions, and study their number by controlling the
nature of the double closed solutions. In~\cite{BFG2}, we studied the
case where $A$ has two simple zeros of which one is at $t=0$, and $B$ has
one simple zero in $[0,T]$. In the present work, we consider the case
where $B$ has two simple zeros in $[0,T]$.  Our main result provides
sufficient conditions to determine the stability of positive singular
closed solutions.

Throughout this communication we shall write
\begin{equation}\label{eq:upper}
	P(t)=4(B (t)A '(t)-B '(t)A (t)) - B ^3(t)
\end{equation}
and
\begin{equation}\label{ecu:v}
v(t,x)=B(t)(2A(t)x+B(t))^2+P(t).
\end{equation}

\begin{theo}\label{theo:bif}
If
	\begin{itemize}
		\item[$(C_1)$]  $A(0)=0$, $A(t)$  has a simple zero  $t_A \in (0,T)$ and $B(t)$ has two simple zeros  $ t_{B_1},t_{B_2}\in [0,T]$ with $0<t_{B_1}<t_A < t_{B_2}\leq T,$
	\end{itemize}
	and for any positive singular closed solution  $\tilde u(t):=u(t,\tilde x)$ of Abel equation \eqref{eq:Abel}
	\begin{itemize}
		\item[$(C_2)$] the function $2A(t)\tilde u(t)+B(t)$ has at most a simple zero in 
		each of the intervals $[0,t_A]$ and $[t_A,T]$, 
		\item[$(C_3)$] $\sgn\left(v(t,\tilde u(t))\right) = \sgn(A'(0)B(0)),$ for all $t\in[0,T],$
	\end{itemize}
	then $ u_{xx}(T,\tilde x)=\sgn(A'(0)B(0))$.
\end{theo}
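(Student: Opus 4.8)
The plan is to express $u_{xx}(T,\tilde x)$ as an integral of the data along $\tilde u$ and then to extract its sign from the zero-structure forced by $(C_1)$--$(C_3)$. Write $F(t,x)=A(t)x^3+B(t)x^2$, so that $F_x=3Ax^2+2Bx$ and $F_{xx}=6Ax+2B$. The first variational equation gives $u_x(t,\tilde x)=\exp\!\big(\int_0^t F_x(s,\tilde u(s))\,ds\big)>0$, and singularity, $u_x(T,\tilde x)=1$, is exactly $\int_0^T F_x(s,\tilde u)\,ds=0$. For the second variation, differentiating $u_{tx}=F_x u_x$ in $x$ yields $\frac{d}{dt}\big(u_{xx}/u_x\big)=F_{xx}(t,\tilde u)\,u_x$; integrating with $u_{xx}(0)=0$ and $u_x(T)=1$ gives
\[
u_{xx}(T,\tilde x)=\int_0^T\big(6A(t)\tilde u(t)+2B(t)\big)\,u_x(t)\,dt .
\]

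Next I would exploit two total-derivative identities that hold along $\tilde u$: a direct computation gives $\frac{d}{dt}\big(u_x/\tilde u\big)=(2A\tilde u+B)u_x$ and $\frac{d}{dt}\big(u_x/\tilde u^2\big)=A\,u_x$. Since $\tilde u$ and $u_x$ are $T$-periodic (closedness together with $u_x(0)=u_x(T)=1$), both right-hand sides integrate to zero, so $\int_0^T(2A\tilde u+B)u_x\,dt=0$ and $\int_0^T A\,u_x\,dt=0$. Feeding these into the formula above (write $6A\tilde u+2B=3(2A\tilde u+B)-B$) collapses it to
\[
u_{xx}(T,\tilde x)=-\int_0^T B(t)\,u_x(t)\,dt=2\int_0^T A(t)\,\tilde u(t)\,u_x(t)\,dt .
\]
Because $u_x>0$, the sign of $u_{xx}(T,\tilde x)$ is that of a signed integral; note that $(C_2)$ and $(C_3)$ have not yet entered, and the sign is genuinely undetermined at this stage since $A$ and $B$ both change sign on $[0,T]$.

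The bridge to the hypotheses is a local identity linking $v$ to $\psi(t):=2A(t)\tilde u(t)+B(t)$. At a zero $t_*$ of $\psi$ one has $B(t_*)=-2A(t_*)\tilde u(t_*)$, and substituting this into $v=B\psi^2+P$ and into $P=4(BA'-B'A)-B^3$ shows, after simplification using $\tilde u'=A\tilde u^3+B\tilde u^2$, that $v(t_*,\tilde u(t_*))=P(t_*)=-4A(t_*)\,\psi'(t_*)$. Hence, setting $\sigma:=\sgn(A'(0)B(0))$, condition $(C_3)$ forces $\sgn\psi'(t_*)=-\sigma\,\sgn A(t_*)$ at every zero of $\psi$; this is consistent because $v(0,\tilde x)=4A'(0)B(0)$, so $(C_3)$ indeed reproduces $\sigma$ at $t=0$. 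Combined with the endpoint values $\psi(0)=B(0)$ and $\psi(t_A)=B(t_A)$ (as $A(0)=A(t_A)=0$), with $(C_1)$ (whereby $t_A$ lies strictly between $t_{B_1}$ and $t_{B_2}$, so $B(0)$ and $B(t_A)$ have opposite signs and $A$ has opposite signs on $(0,t_A)$ and $(t_A,T)$), and with $(C_2)$ (at most one simple zero of $\psi$ in each of $[0,t_A]$ and $[t_A,T]$, each crossing in the direction dictated by the local identity), one pins down the sign of $\psi$, of $A$, and of $B$ on every subinterval of $[0,T]$.

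The final and hardest step is to convert this qualitative sign pattern into the sign of $u_{xx}(T,\tilde x)=2\int_0^T A\tilde u\,u_x\,dt$. The idea is to use $\int_0^T A\,u_x\,dt=0$ to recentre, $\int_0^T A\tilde u\,u_x\,dt=\int_0^T A(\tilde u-c)\,u_x\,dt$ for any constant $c$, and then to control the monotonicity of $\tilde u$ through $\tilde u'=\tilde u^2(A\tilde u+B)=\tfrac12\tilde u^2(\psi+B)$, whose sign is known from the previous step; an appropriate choice of $c$ (e.g.\ $\tilde u(t_A)$) makes $A(\tilde u-c)$ have favourable sign on the bulk of each subinterval. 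The obstacle is precisely the sign bookkeeping across the competing intervals $(0,t_A)$ and $(t_A,T)$: since $A$, $B$, and $\psi$ all change sign, one must use the exact location of the zeros of $\psi$ relative to those of $B$, together with the growth of the positive weight $u_x$, to show that the two contributions do not cancel but combine to sign $\sigma$. This is exactly where $(C_2)$ (simplicity, limiting sign changes) and $(C_3)$ (constant sign of $v$, hence controlled crossing directions of $\psi$) are indispensable, and carrying it through yields $\sgn u_{xx}(T,\tilde x)=\sigma=\sgn(A'(0)B(0))$.
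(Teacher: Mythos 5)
Your first two steps are correct and essentially reconstruct the paper's machinery: the identity $u_{xx}(T,\tilde x)=\int_0^T(6A\tilde u+2B)u_x\,dt$, the vanishing of $\int_0^T(2A\tilde u+B)u_x\,dt$ and $\int_0^T A\,u_x\,dt$ (these are exactly why the paper's integral $\int_0^T F(t,\alpha)G(t,\beta)\,dt$ is independent of $\alpha,\beta$ in Proposition~\ref{prop:bif0}), and the local identity $v(t_*,\tilde u(t_*))=P(t_*)=-4A(t_*)\psi'(t_*)$ at zeros of $\psi=2A\tilde u+B$, which together with $(C_2)$ pins down the sign pattern of $\psi$ (this is the content of Propositions~\ref{prop:alpha} and \ref{prop:beta}). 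But the proof stops exactly where the theorem actually lives: the last paragraph does not prove that $2\int_0^T A\tilde u\,u_x\,dt$ has sign $\sigma$; it only asserts that ``carrying it through'' works. Worse, the route you sketch cannot be carried through as stated. With the weight fixed as $u_x>0$ and the recentering $\int_0^T A(\tilde u-c)u_x\,dt$, the factor $A(t)(\tilde u(t)-c)$ would need a single sign on $(0,T)$, which forces $\tilde u\geq c$ on $(0,t_A)$ and $\tilde u\leq c$ on $(t_A,T)$; periodicity $\tilde u(0)=\tilde u(T)$ then forces $\tilde u(0)=c$, and $c=\tilde u(t_A)$ as well, which is false in general. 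So cancellation between the two subintervals is unavoidable in your ansatz, and ``favourable sign on the bulk'' plus ``the contributions do not cancel'' is not an argument without quantitative estimates, none of which are given. Note also that the sign of $\tilde u'=\tfrac12\tilde u^2(\psi+B)$ is \emph{not} determined by the sign pattern of $\psi$ on the subintervals where $\psi$ and $B$ have opposite signs, so even the monotonicity input you invoke is unavailable.

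The missing idea is to vary \emph{both} factors rather than estimate cancellation for a fixed one. The paper writes $u_{xx}(T,\tilde x)$ as $\int_0^T F(t,\alpha)G(t,\beta)\,dt$ with $F(t,\alpha)=(2-\alpha)B+2(3-\alpha)A\tilde u$ and $G(t,\beta)=u_x-\beta\tilde u$ (your identities show this integral is the same for all $\alpha,\beta$), observes that the zero set of $F$ is the graph of a strictly decreasing function $\alpha(t)$ with vertical asymptotes at the zeros $t_1,t_2$ of $\psi$ (monotonicity is exactly where $(C_3)$ enters, via your identity $\sgn\alpha'=\sgn v$), that the zero set of $G$ is the graph of $\beta(t)=u_x/\tilde u$ with a maximum at $t_1$ and a minimum at $t_2$, and then uses an intermediate-value argument on $d(\beta)=\alpha(T_1(\beta))-\alpha(T_2(\beta))$ to choose $\alpha,\beta$ so that the sign changes of $F(\cdot,\alpha)$ and $G(\cdot,\beta)$ \emph{coincide}. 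Then $FG$ is single-signed and no cancellation needs to be estimated. Without this (or an equivalent) tuning step, your argument has a genuine gap precisely at the conclusion.
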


\begin{rema}\label{rem:signos}
For the sake of simplicity of exposition, we assume $A'(0)<0$ and
$B(0)>0$. So condition $(C_3)$ becomes $v(t,\tilde u(t)) < 0$ for
all $t \in [0,T]$, and the conclusion is $ u_{xx}(T,\tilde
x)<0$. The other cases are proved similarly.
\end{rema}

A difficult point for the above result to be applicable is to verify
when hypotheses $(C_2)$ and $(C_3)$ hold since they include the
unknown singular closed solution. Nevertheless, in
Propositions~\ref{prop:upper} and \ref{PropC3} and
Corollary~\ref{CorC3} below, we shall give sufficient algebraic
conditions for them to be checked computationally.

As a motivating example, consider the family of Abel equations
\begin{equation} 	\label{eq:Abelquadquad}
	x'=t(t-t_A)x^3+ (t-t_{B})(t-1)x^2, 
	\quad t_A, t_{B} \in\mathbb{R},
\end{equation}
where $t\in[0,1]$. Upper bounds of the number of positive closed
solutions of \eqref{eq:Abelquadquad} have been obtained for some
cases, as will be detailed in Section~\ref{Sect4}. As a consequence of
Theorem \ref{theo:bif} above, we prove:
\begin{theo}\label{theo:example}
Abel equation \eqref{eq:Abelquadquad} has at most two positive closed solutions, 
taking into account their multiplicities, and this upper bound is sharp.
\end{theo}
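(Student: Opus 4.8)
The plan is to apply Theorem~\ref{theo:bif} to \eqref{eq:Abelquadquad} in order to fix the sign of $u_{xx}$ at every positive singular closed solution, and then to convert this local information into a global bound of two cycles by a monotone (rotated) family argument. First I would set $A(t)=t(t-t_A)$, $B(t)=(t-t_B)(t-1)$, $T=1$, and split the parameter plane. Hypothesis $(C_1)$ holds precisely when $0<t_B<t_A<1$, so that $t_{B_1}=t_B$, $t_{B_2}=1=T$, and the ordering $0<t_{B_1}<t_A<t_{B_2}$ is met; there one has $A'(0)=-t_A<0$ and $B(0)=t_B>0$, hence $\sgn(A'(0)B(0))<0$ and we are in the normalization of Remark~\ref{rem:signos}, with expected conclusion $u_{xx}(1,\tilde x)<0$. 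For parameters outside this region I would dispose of the count using the uniqueness and nonexistence results recalled in Section~\ref{Sect4}, where $A$ or $B$ has constant sign on $[0,1]$ (or a zero leaves $[0,1]$), forcing at most one positive closed solution.

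Next I would verify the remaining hypotheses $(C_2)$ and $(C_3)$ by reducing them to the algebraic criteria of Propositions~\ref{prop:upper} and~\ref{PropC3} and Corollary~\ref{CorC3}. Since $A,B$ are quadratic in $t$, the polynomial $P(t)=4(BA'-B'A)-B^3$ and the expression $v(t,x)=B(t)(2A(t)x+B(t))^2+P(t)$, quadratic in $x$, are explicit in $(t,x,t_A,t_B)$; I would check that $2A(t)x+B(t)$ has at most one zero in each of $[0,t_A]$ and $[t_A,1]$ and that $v<0$ along admissible trajectories, both recast as sign conditions on the resultants and discriminants $\res$, $\disc$ in the parameters $(t_A,t_B)$. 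This is the computational heart of the proof and, I expect, the main obstacle: the criteria must hold \emph{uniformly} over the whole admissible region $0<t_B<t_A<1$, not merely at isolated parameters. Once this is done, Theorem~\ref{theo:bif} gives $u_{xx}(1,\tilde x)<0$ for every positive singular closed solution $\tilde u$; in particular each is a genuine semistable cycle of multiplicity exactly two, and no positive closed solution has multiplicity three or more.

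To pass to the global count I would embed \eqref{eq:Abelquadquad} in a monotone one-parameter family in the sense of \eqref{eq:Abelparametric}, for instance $x'=A(t)x^3+(B(t)+\lambda)x^2$, which satisfies $F_\lambda=x^2>0$ for $x>0$, keeps $A(0)=0$, and returns the given equation at $\lambda=0$, while preserving $(C_1)$--$(C_3)$, and hence the sign $u_{xx}<0$, throughout the relevant $\lambda$-range. In the $(\lambda,x)$-plane the set of positive closed solutions is then a union of smooth curves whose only vertical tangencies are the semistable cycles; because every such fold has $u_{xx}<0$, all folds open in the same $\lambda$-direction. Consequently no component is a closed loop and each component carries at most one fold, so the vertical line $\lambda=0$ meets each component in at most two points counted with multiplicity. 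A boundary analysis then limits the components: each is non-compact and must terminate either on the blow-up locus or at the origin $x=0$ (the latter only at the isolated value $\lambda_0$ with $\int_0^1(B+\lambda_0)\,dt=0$, where $\int_0^1 B\,dt=t_B/2-1/6$), and this controls how many components cross $\lambda=0$, yielding at most two positive closed solutions counted with multiplicity. Controlling the number of components and their boundary behavior is the delicate part of this step.

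Finally, for sharpness it suffices to produce parameters $(t_A,t_B)$ in the admissible region carrying a positive double (semistable) cycle, since such a cycle already counts as two; alternatively, perturbing the fold, or choosing $t_B$ on the appropriate side of the origin-bifurcation locus $t_B=1/3$, produces two simple positive closed solutions. Either construction shows that the upper bound of two is attained, completing the proof.
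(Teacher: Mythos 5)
Your overall strategy coincides with the paper's: verify $(C_1)$--$(C_3)$ on the region $0<t_B<t_A<1$, apply Theorem~\ref{theo:bif} to get $u_{xx}(1,\tilde x)<0$ at every positive singular closed solution, and then convert this into a global bound via a monotone family. The cases outside that region are indeed dispatched by the results recalled at the start of Section~\ref{Sect4}, but note that the subcase $0<t_A<t_B<1$ is \emph{not} one where $A$ or $B$ has constant sign: it requires the linear-combination criterion of \cite{AGG} together with the computation $\disc(d)=-(1-t_A)t_B(t_A-t_B)$. The first genuine gap is that the step you defer as the ``computational heart'' is in fact most of the proof: the paper shows directly that $P<0$ on $(0,t_B)\cup(t_A,1)$ (Lemma~\ref{lema3}, Proposition~\ref{prop1}), and for $(C_3)$ it must locate, uniformly in $(t_A,t_B)$, the unique zero of the cubic $Q$ of Corollary~\ref{CorC3}, which is done via Sturm's theorem and the Budan--Fourier theorem (Propositions~\ref{prop2} and~\ref{prop3}) before checking that $v(\bar t,x)=0$ has no positive root. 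Asserting that this ``can be recast as sign conditions on resultants and discriminants'' is a plan, not a proof.

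The second, more structural, gap is your choice of deformation $x'=A(t)x^3+(B(t)+\lambda)x^2$. Theorem~\ref{theo:bifrotated} needs $u_{xx}(T,\tilde x,\lambda)<0$ for \emph{every} singular closed solution along the whole $\lambda$-range, hence $(C_1)$--$(C_3)$ must hold for the perturbed equation, whose second coefficient $(t-t_B)(t-1)+\lambda$ is no longer of the form you analyzed; you would have to redo the entire algebraic verification on an enlarged family, and $(C_1)$ itself fails once a zero of $B+\lambda$ crosses $t_A$, which happens before you reach the value of $\lambda$ at which $B+\lambda\ge 0$ and uniqueness is known. The paper avoids this by taking $\lambda=-t_A$, so that $F_\lambda=tx^3>0$ and the deformation stays inside the family for which $(C_1)$--$(C_3)$ were verified on all of $0<t_B<t_A<1$, with the endpoint $t_A=t_B$ handled by \cite{AGG}. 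Two further points are skated over in your last two steps: the bound coming out of Theorem~\ref{theo:bifrotated} is $N(\lambda_2)+2=3$, and one must separately exclude the Hopf bifurcation at the origin for $t_B\neq 1/3$ (where the stability of $x=0$ does not change) and then treat $t_B=1/3$ by perturbing in $t_B$; and for sharpness it does not suffice to cross the locus $t_B=1/3$ --- two positive cycles are produced only by unfolding the codimension-two degeneracy at $(t_A,t_B)=(2/3,1/3)$, where $u(1,x)-x=-x^4/540+\mathcal{O}(x^5)$.
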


As we shall see, the existence of two positive closed solutions is due
to the fact that for $t_A=2/3$ and $t_B=1/3$ the multiplicity of the
closed solution $x=0$ is four, while generically it is two. Hence a
Hopf-like codimension-two bifurcation appears, and two positive closed
solutions bifurcate from the origin.

\medskip

The main motivation for this paper was Problem~6 of \cite{G},
i.e., to obtain the maximum number of limit cycles of
the Abel equation
\begin{equation}\label{eq:Abel_trigonometric_linear}
x'= \left(a_1+a_2\sin\,t+a_3\cos\,t\right)x^3+\left(b_1+b_2\sin\,t+b_3\cos\,t\right)x^2.
\end{equation}
We address this problem in Section~\ref{Sect5}.  For this equation,
the functions $A,B$ have at most two simple zeros, and a Hopf-like
codimension-two bifurcation at $a_0=b_0=0$ proves the existence of at
least two positive limit cycles. If $A$ and $B$ have at most one
simple zero, or the simple zeros of $A$ and $B$ do not alternate, the
problem is solved in~\cite{AGG} and \cite{BFG}. We prove that
Theorem~\ref{theo:bif} explains the upper bound of two positive limit
cycles in a region where two positive limit cycles bifurcate from the
origin, giving a partial answer to Problem~6 of \cite{G}. We also
discuss the limitations of Theorem~\ref{theo:bif} in this case.

%%%%%%%%%%%%%%%%%%%%%%%%%%%%%%%%%%%%%%%%%%%%%%%%%%%%%%%%%%%%%%%%%%
%%%%%%%%%%%%%%%%%%%%%%%%%%%%%%%%%%%%%%%%%%%%%%%%%%%%%%%%%%%%%%%%%%
\section{Stability of Singular Closed Solutions}

In this section we prove the main result, but first we shall explain 
how the stability of the singular closed solutions determines the maximum 
number of limit cycles for \eqref{eq:Abelparametric} assuming
that this number is known for certain values of the parameter.

Assume that the family \eqref{eq:Abelparametric} satisfies $F_\lambda(t,x,\lambda)>0$
for all $t\in(0,T)$, $x>0$, and $\lambda\in (\lambda_1,\lambda_2)$. If $u(t,x,\lambda)$
denotes the solution of~\eqref{eq:Abelparametric} determined by $u(0,x,\lambda)=x$ then 
$u(t)$ is a closed solution if and only if $u(T,u(0),\lambda)=u(0)$.
As the monotonicity of $F(t,x,\lambda)$ with respect to $\lambda$ implies that of  $u(t,x,\lambda)$ when the latter is positive, the Implicit Function Theorem guarantees the existence of a $\mathcal{C}^1$ function $\Lambda$ defined by 
\[
u\left(T,x,\Lambda(x)\right)=x.
\]
Therefore, for every fixed $\lambda$, the number of positive
closed solutions, $N(\lambda)$, is the number of solutions of $\Lambda(x)=\lambda$. Note that \[\Lambda'(x)=\frac{1-u_{x}(T,x,\Lambda(x))}{u_{\lambda}(T,x,\Lambda(x))},\]
where $u_{\lambda}(T,x,\Lambda(x))>0$, and if $\Lambda'(x)=0$ then
\[
\Lambda''(x)=-\frac{u_{xx}(T,x,\Lambda(x))
%-2u_{x\lambda}(2\pi,x,\Lambda(x))\Lambda'(x)-u_{\lambda \lambda}(2\pi,x,\Lambda(x))\left(\Lambda'(x)\right)^2
}{u_{\lambda}(T,x,\Lambda(x))}.
\]

\medskip 

The following result is an adaptation of \cite[Theorem 1.3]{BFG2}. 
It states that if the number of closed solutions for a certain value of $\lambda$, e.g. $\lambda_2$, is known
and the graph of $\Lambda$ has only minima, then the number of closed solutions 
cannot increase for lower values of the parameter, except maybe for two closed solutions corresponding to a bifurcation of the origin and a bifurcation of infinity. A similar
conclusion holds when the graph of  $\Lambda$ has only maxima.

\begin{theo}\label{theo:bifrotated}
Assume that Abel equation \eqref{eq:Abel} satisfies $F_\lambda(t,x,\lambda)>0$ for every $\lambda \in (\lambda_1,\lambda_2)$, $t\in(0,T)$ and $x>0$, and that $u_{xx}(T,\tilde x ,\lambda)<0$ ($u_{xx}(T,\tilde x,\lambda)>0$), for every positive singular closed solution $u(t,\tilde x,\lambda)$ with $\lambda\in[\lambda_1,\lambda_2]$. 
Then 
\[
N(\lambda)\le N(\lambda_2) + 2 \ (N(\lambda)\le N(\lambda_1) + 2)\quad \text{for every }  \lambda \in(\lambda_1,\lambda_2 ).
\]
Moreover, the two possible additional closed solutions correspond to 
a Hopf bifurcation of the origin or a Hopf bifurcation of infinity.

\end{theo}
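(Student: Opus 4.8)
The plan is to convert the hypotheses into information about the shape of the graph of the $\mathcal{C}^1$ function $\Lambda$ introduced above, and then run a counting argument on the level sets $\Lambda^{-1}(\lambda)$. First I would note that the positive singular closed solutions are exactly the critical points of $\Lambda$: by the displayed formula $\Lambda'(x)=0$ holds precisely when $u_x(T,x,\Lambda(x))=1$, which is the defining condition for $u(t,x,\Lambda(x))$ to be singular. At such a point the second displayed identity gives $\Lambda''(x)=-u_{xx}(T,x,\Lambda(x))/u_\lambda(T,x,\Lambda(x))$. Since $u_\lambda>0$ by monotonicity and $u_{xx}(T,\tilde x,\lambda)<0$ by assumption, I obtain $\Lambda''(x)>0$ at every critical point, so \emph{every} critical point of $\Lambda$ is a strict local minimum.

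Next I would invoke the elementary fact that a $\mathcal{C}^1$ function all of whose critical points are strict local minima has at most one critical point on each interval of its domain (between two strict minima there would have to lie an interior maximum, i.e.\ a critical point that is not a minimum). Hence on each connected component of its domain $\Lambda$ is either monotone or strictly U-shaped, and at each minimum critical value the two nearby solutions of $\Lambda(x)=\lambda$ exist only for $\lambda$ \emph{above} that value.

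For the counting step I would track $N(\lambda)=\#\{x>0:\Lambda(x)=\lambda\}$ as $\lambda$ decreases from $\lambda_2$. Because $u_\lambda>0$, the set $\{u(T,x,\lambda)=x\}$ is exactly the graph of $\Lambda$, so $N(\lambda)$ is the number of preimages of $\lambda$ under $\Lambda$. As $\lambda$ decreases, the integer $N$ can jump only at a critical value of $\Lambda$ or when a solution branch enters or leaves through the $x$-boundary of the domain, namely at the origin $x\to 0^+$ or at infinity $x\to+\infty$ (or a finite blow-up value). At an interior critical value the two merging solutions exist only for larger $\lambda$, so crossing it downward removes two solutions; interior folds therefore never increase $N$ as $\lambda$ decreases. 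Since finitely many minima cannot accumulate, $\Lambda$ is monotone near each end, so each of the origin and infinity carries a single branch contributing at most one new solution as $\lambda$ decreases. Consequently, going from $\lambda_2$ down to $\lambda$ the count increases by at most two, giving $N(\lambda)\le N(\lambda_2)+2$, the two possible extra solutions being the branches emanating from the origin and from infinity, i.e.\ a Hopf bifurcation of the origin or of infinity. The case $u_{xx}>0$ is symmetric: every critical point is then a strict local maximum, the two nearby solutions exist only for $\lambda$ below the critical value, and the same argument run with $\lambda$ increasing from $\lambda_1$ yields $N(\lambda)\le N(\lambda_1)+2$.

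The main obstacle I anticipate is the rigorous treatment of the boundary behavior rather than the interior part, which is forced by the sign of $\Lambda''$ established in the first step. One must justify that at most one branch enters from the origin and one from infinity, control the limits $\lim_{x\to 0^+}\Lambda(x)$ and $\lim_{x\to+\infty}\Lambda(x)$ (allowing finite-time blow-up to play the role of $x=+\infty$), rule out new solutions appearing where $\Lambda$ is undefined, handle the closed endpoint $\lambda=\lambda_2$ (where monotonicity is only assumed on the open interval) by a limiting argument, and confirm that the near-origin and near-infinity branches are genuinely the Hopf bifurcations named in the statement.
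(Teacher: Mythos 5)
Your interior argument coincides with the paper's: the critical points of $\Lambda$ are exactly the singular closed solutions, the sign hypothesis on $u_{xx}$ forces $\Lambda''>0$ there, so every critical point is a strict local minimum and each component of the domain of $\Lambda$ carries at most one fold, which can only destroy (never create) solutions as $\lambda$ decreases. That part is correct and is essentially the first paragraph of the paper's proof.

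The genuine gap is that everything you list under ``the main obstacle I anticipate'' is not a residual technicality but the actual content of the paper's proof. Since $\Lambda$ is only defined implicitly and its domain may a priori be a complicated union of intervals, your counting argument breaks down unless you can show that no new solution branch of $\Lambda(x)=\lambda$ can materialize at an interior endpoint of the domain for $\lambda\in(\lambda_1,\lambda_2)$; otherwise the ``only two extra branches, at the origin and at infinity'' conclusion is unjustified. The paper closes this with three claims. Claim~1 is the key missing idea: for $0<x_1<x_2$ one has $u(t,x_1,\Lambda(x_1))<u(t,x_2,\Lambda(x_2))$ for all $t$, proved by observing that when $\Lambda(x_1)\neq\Lambda(x_2)$ one closed solution is a lower or upper solution of the equation satisfied by the other (this is where $F_\lambda>0$ is used beyond the formula for $\Lambda'$). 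This monotone ordering lets one pass to the limit $u(t,\bar x,\bar\lambda)=\lim_{x\to\bar x}u(t,x,\Lambda(x))$ at any endpoint $\bar x$ of a domain component and conclude (Claims~2 and~3) that each component is a closed interval except possibly at $0$ and at the far right end, that $\Lambda$ takes the value $\lambda_1$ or $\lambda_2$ at every interior endpoint, and that an open right end corresponds to an unbounded solution. Only with this structure of $D=(0,x_1]\cup[x_2,x_3]\cup\cdots$ in hand does the level-set count reduce to: at most $N(\lambda_2)$ solutions on the interior components, plus at most one extra on $(0,x_1)$ (Hopf at the origin) and one on the last component (Hopf at infinity). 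Without an argument of the type of Claim~1 your proof does not go through.
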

\begin{proof}
In each of the intervals of the domain of definition of $\Lambda$ there is at most one extremum point, which is a minimum, since otherwise there are two consecutive zeros $x_1<x_2$ of $\Lambda'$ which satisfy $\Lambda''(x_1) \Lambda''(x_2)\leq0$ in contradiction  with the hypothesis. Hence, $\Lambda$ is monotonic or it has a unique  minimum, being alternate monotonic in the latter case, since two consecutive hyperbolic closed solutions have opposite stability.

\medskip
{\em Claim 1.} If $0<x_1<x_2$ then $u(t,x_1,\Lambda(x_1))<u(t,x_2,\Lambda(x_2))$.

If $\Lambda(x_1)=\Lambda(x_2)$, the conclusion follows by the uniqueness of solutions of the initial value problem. If  $\Lambda(x_1)<\Lambda(x_2)$ (resp.~$\Lambda(x_1)>\Lambda(x_2)$) then $u(t,x_1,\Lambda(x_1))$ is a lower (resp.~upper) solution of $x'=F(t,x,\Lambda(x_2))$. The conclusion holds since closed solutions cannot cross lower or upper closed solutions.

\medskip 
{\em Claim 2.} If $\Lambda(x)$ is defined in the interval $(\bar{x}, \tilde{x}]$ with $\bar{x}>0$ then it is also defined at $\bar{x}$.

By claim 1, $x\to u(t,x,\Lambda(x))$ is strictly increasing. Also, there exists $\epsilon>0$ such that $\Lambda$ is 
monotonous continuous in $(\bar x,\bar x+\epsilon)$, so that, 
denoting $\bar{\lambda}= \lim_{x\to \bar{x}}\Lambda(x)$,
\[
u(t,\bar x,\bar \lambda)=\lim_{x\to \bar{x}} u(t,x,\Lambda(x)).
\]
Note that the limit exists since $x\to u(t,x,\Lambda(x))$ is an
increasing function bounded below by $0$. Moreover, $u(t,\bar x,\bar
\lambda)$ is periodic in $t$ since the functions $t\to u(t,x,\Lambda(x))$ are. Thus
$\Lambda(\bar x)=\bar \lambda$.

\medskip 
{\em Claim 3.} If $\Lambda$ is defined in a set $[x_1,x_2) \cup \{x_3\}$, where $x_2<x_3$, then it is also defined at $x_2$.

 By claim 1, $u(t,x,\Lambda(x))<u(t,x_3,\Lambda(x_3))$ for all $x<x_3$, so that if we 
denote $\lambda_2= \lim_{x\to x_2}\Lambda(x)$ then 
\[
u(t,x_2,\lambda_2)=\lim_{x\to x_2} u(t,x,\Lambda(x)),
\]
and we conclude analogously.

\medskip 

As a consequence of claims 1, 2, and 3, if  $D$ is the domain of definition of $\Lambda$ then 
$$
D= (0,x_1] \cup [x_2,x_3] \cup \cdots \cup [x_{n-1},x_{n}],
$$
or
$$
D= (0,x_1] \cup [x_2,x_3] \cup \cdots \cup [x_{n},x_{n+1}),
$$
or 	
$$
D=  [x_1,x_2] \cup [x_3,x_4]\cup \cdots \cup [x_{n-1},x_{n}],
$$
or
$$
D=  [x_1,x_2] \cup [x_3,x_4]\cup \cdots \cup [x_{n},x_{n+1}),
$$
where $x_{n+1}\leq \infty$,  $\Lambda(x_i) \in\{ \lambda_1, \lambda_2\}$, for $i=1,\ldots,n$,
and if $x_{n+1}<\infty$ and 
\[
\bar \lambda = \lim_{x\to x_{n+1}} \Lambda(x) <+\infty
\]
then the solution $u(t,x_{n+1},\bar \lambda)$ is unbounded.

As $\Lambda$ has only local minima,  for every $\lambda\in[\lambda_1,\lambda_2]$  the number of 
solutions of $\Lambda(x) = \lambda$ with $x\in (x_i,x_{i+1})$ is less than or 
equal to the number of solutions of $\Lambda(x) = \lambda_2$ with $x\in (x_i,x_{i+1})$, while 
the number of 
solutions of $\Lambda(x) = \lambda$ with $x\in (0,x_1)$ or $x\in (x_n,x_{n+1})$ is less than or 
equal to the number of solutions of $\Lambda(x) = \lambda_2$  with $x\in (0,x_1)$ or $x\in (x_n,x_{n+1})$
plus one, where the extra solution corresponds to a Hopf bifurcation of the origin or infinity, respectively.

\end{proof}	

By the change $\lambda\to-\lambda$, a similar result holds if $F_\lambda(t,x,\lambda)<0$.

\medskip 

Now we shall prove Theorem~\ref{theo:bif}, which determines the stability of 
the singular positive closed solutions. 
We divide the proof into various propositions.
In the following, we assume that $\tilde u(t):=u(t,\tilde x)$ is a singular positive closed solution, and that $(C_1),(C_2)$, and $(C_3)$ hold.

\begin{prop}[\cite{BFG2}]\label{prop:bif0}
For any $\alpha,\beta\in\mathbb{R}$,
\begin{equation}\label{exp}
\sgn\left( u_{xx}(T,\tilde x)\right)= \sgn\left(\int_{0}^{T} F(t,\alpha)G(t,\beta)\,dt \right),
\end{equation}
where
\begin{align*}
F(t,\alpha)&:=(2-\alpha)B (t)+ 2(3-\alpha) A (t) \tilde u(t),\\
G(t,\beta)&:= u_x(t,\tilde x) - \beta \tilde u(t).
\end{align*}
\end{prop}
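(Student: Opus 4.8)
The plan is to reduce the statement to the case $\alpha=\beta=0$ and then to show that the two free parameters contribute nothing at all. Writing the right-hand side of \eqref{eq:Abel} as $f(t,x)=A(t)x^3+B(t)x^2$, the first and second variational equations along $\tilde u$ read $y'=f_x(t,\tilde u)\,y$ (with $y=u_x(t,\tilde x)$, $y(0)=1$) and $z'=f_x(t,\tilde u)\,z+f_{xx}(t,\tilde u)\,u_x^2$ (with $z=u_{xx}(t,\tilde x)$, $z(0)=0$). First I would solve the second equation by variation of parameters, the homogeneous solution being $u_x$ itself, obtaining
\[
u_{xx}(T,\tilde x)=u_x(T,\tilde x)\int_0^T f_{xx}(t,\tilde u)\,u_x(t,\tilde x)\,dt.
\]
Since $\tilde u$ is singular, $u_x(T,\tilde x)=1$; and because $f_{xx}(t,\tilde u)=6A\tilde u+2B=F(t,0)$ and $u_x=G(t,0)$, this is precisely the asserted identity for $\alpha=\beta=0$, and in fact as an \emph{equality}, not merely of signs.

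Next I would expand the general integrand about this base case. Setting $w(t):=2A(t)\tilde u(t)+B(t)$, one has $F(t,\alpha)=F(t,0)-\alpha\,w$ and $G(t,\beta)=G(t,0)-\beta\,\tilde u$, so
\[
\int_0^T F(t,\alpha)G(t,\beta)\,dt
=u_{xx}(T,\tilde x)-\beta\int_0^T F(t,0)\tilde u\,dt
-\alpha\int_0^T w\,u_x\,dt+\alpha\beta\int_0^T w\,\tilde u\,dt.
\]
The heart of the matter is to prove that the three correction integrals all vanish, which yields the stronger conclusion that the integral equals $u_{xx}(T,\tilde x)$ for every $\alpha,\beta$, hence a fortiori the claimed equality of signs.

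The key step, and the one I expect to be the main obstacle, is to produce the exact differentials whose periodicity forces these integrals to zero. Because $\tilde u$ is closed and singular, both $\tilde u$ and $u_x$ are positive and $T$-periodic, so $\log\tilde u$ and $\log u_x$ are $T$-periodic; integrating $(\log\tilde u)'=A\tilde u^2+B\tilde u$ and $(\log u_x)'=3A\tilde u^2+2B\tilde u$ over $[0,T]$ gives two relations whose coefficient matrix is invertible, whence $\int_0^T A\tilde u^2\,dt=\int_0^T B\tilde u\,dt=0$. These at once annihilate the $\beta$-term, since $\int_0^T F(t,0)\tilde u\,dt=6\int A\tilde u^2+2\int B\tilde u$, and the $\alpha\beta$-term, since $\int_0^T w\,\tilde u\,dt=2\int A\tilde u^2+\int B\tilde u$. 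For the $\alpha$-term I would verify by direct computation that $\frac{d}{dt}\bigl(u_x/\tilde u\bigr)=w\,u_x$; as $u_x/\tilde u$ is $T$-periodic, its integral over a period vanishes, so $\int_0^T w\,u_x\,dt=0$. Assembling the three facts gives $\int_0^T F(t,\alpha)G(t,\beta)\,dt=u_{xx}(T,\tilde x)$ for all $\alpha,\beta$. Spotting the combination $u_x/\tilde u$, and realising that the two logarithmic identities must be used \emph{together} rather than singly, is the only non-mechanical ingredient; the remainder is the routine variational computation.
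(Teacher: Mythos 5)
Your proof is correct, and in fact establishes the stronger statement that $\int_0^T F(t,\alpha)G(t,\beta)\,dt=u_{xx}(T,\tilde x)$ for all $\alpha,\beta$: the variation-of-parameters formula for the second variational equation, the two moment identities $\int_0^T A\tilde u^2\,dt=\int_0^T B\tilde u\,dt=0$ coming from closedness and singularity, and the exact differential $\bigl(u_x/\tilde u\bigr)'=(2A\tilde u+B)u_x$ are all verified correctly. The paper itself gives no proof (it imports the result from \cite{BFG2}), and your argument is essentially the standard one used there, so there is nothing to add.
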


With equation~\eqref{exp} in mind, the idea for proving Theorem~\ref{theo:bif} is to choose
$\alpha$ and $\beta$ such that the  changes of sign of the corresponding $F(t,\alpha)$ and $G(t,\beta)$ coincide, and consequently $F(t,\alpha)G(t,\beta)$ does not change sign. 

The first step is to determine the changes of sign of $F(t,\alpha)$ and $G(t,\beta)$, which is done in the following two propositions, where we have taken into account Remark \ref{rem:signos}.

By $(C_2)$, $2A(t)\tilde u(t)+B(t)$ has at most one simple zero in each of the intervals $[0,t_A]$, $[t_A,T]$. 
In view of  the signs of $A$ and $B$, it can be proved that there are at least two simple zeros,  denoted  by $t_1$ and $t_2$, such
that
\[0<t_1<t_{B_1}<t_A<t_2<t_{B_2}\leq T.\]
Moreover, regarding $(C_1)$ and Remark \ref{rem:signos}, $2A(t)\tilde u(t)+B(t)$ is positive in $[0,t_1)\cup(t_2,T]$ and negative in $(t_1,t_2)$.

\begin{prop}\label{prop:alpha}
for all $\alpha \in \mathbb{R}$, $F(t,\alpha)$ has at most two changes of sign in $(0,T)$.
More precisely,
\begin{enumerate}
 \item $F(t,\alpha)=0$ is the graph of a smooth function $\alpha(t)$ defined for
  every $t\neq t_1,t_2$,
 \item $F(t,\alpha) > 0$ for $\alpha < \alpha(t),\ t \in [0,t_1) \cup (t_2,T]$, and for $\alpha > \alpha(t),\ t \in [t_1,t_2]$,
 \item  $F(t,\alpha) < 0$ for $\alpha > \alpha(t),\ t \in [0,t_1) \cup (t_2,T]$, and for $\alpha < \alpha(t),\ t \in [t_1,t_2]$,
\item $\alpha(t)$ is strictly decreasing in its domain of definition,
 \item for every $i=1,2$,
 \[
 \lim_{t\to t_i^{\pm}} \alpha(t)=\pm\infty,
 \]
 \item $\alpha(T)\geq \alpha(0)=2$.
 \end{enumerate}
\end{prop}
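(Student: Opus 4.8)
The plan is to reduce everything to the single auxiliary function $w(t):=2A(t)\tilde u(t)+B(t)$, whose zeros $t_1,t_2$ and sign pattern are already recorded. A direct manipulation rewrites
\[
F(t,\alpha)=(2-\alpha)B(t)+2(3-\alpha)A(t)\tilde u(t)=(3-\alpha)\,w(t)-B(t).
\]
Since this is affine in $\alpha$ with leading coefficient $-w(t)$, for every $t\neq t_1,t_2$ (that is, $w(t)\neq0$) the equation $F(t,\alpha)=0$ has the unique, smooth root
\[
\alpha(t)=3-\frac{B(t)}{w(t)},
\]
which establishes $(1)$. Reading $F(t,\cdot)$ as an affine function of slope $-w(t)$, its sign on either side of $\alpha(t)$ is governed solely by the sign of $w(t)$: where $w>0$ (on $[0,t_1)\cup(t_2,T]$) it decreases in $\alpha$, and where $w<0$ (on $(t_1,t_2)$) it increases. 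This yields $(2)$ and $(3)$ simultaneously.

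The heart of the matter is the monotonicity $(4)$, and this is where $(C_3)$ must enter. Differentiating gives $\alpha'(t)=-(B'w-Bw')/w^2$, so everything hinges on the sign of $B'w-Bw'$. The plan is to expand $w'=2A'\tilde u+2A\tilde u'+B'$, substitute the Abel equation $\tilde u'=A\tilde u^3+B\tilde u^2$, and eliminate $A'B-AB'$ using $P=4(BA'-B'A)-B^3$. After collecting terms I expect the cubic and quadratic contributions in $\tilde u$ to recombine, via $w^2=(2A\tilde u+B)^2$, precisely into $Bw^2$, leaving the clean identity
\[
B'(t)w(t)-B(t)w'(t)=-\tfrac12\,\tilde u(t)\,v(t,\tilde u(t)),
\]
with $v(t,x)=B(t)(2A(t)x+B(t))^2+P(t)$. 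Hence $\alpha'(t)=\tilde u(t)\,v(t,\tilde u(t))/(2w(t)^2)$, and since $\tilde u>0$ and $(C_3)$ gives $v(t,\tilde u(t))<0$ by Remark~\ref{rem:signos}, we get $\alpha'(t)<0$ throughout. Verifying this cancellation is the one genuinely computational step and is the main obstacle; the fact that exactly $v$ appears is the reason $v$ was defined.

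The remaining items are sign bookkeeping. For $(5)$, at each $t_i$ one has $w(t_i)=0$ while $B(t_i)\neq0$ (indeed $B(t_1)>0$ since $t_1<t_{B_1}$, and $B(t_2)<0$ since $t_{B_1}<t_2<t_{B_2}$); as $t$ crosses $t_i$ the simple zero $w$ changes sign according to the recorded pattern, and the two effects compensate so that $B/w\to\mp\infty$ and hence $\alpha=3-B/w\to\pm\infty$, matching the claimed one-sided limits for both $i$. For $(6)$, $A(0)=0$ gives $w(0)=B(0)$, whence $\alpha(0)=2$; and at $t=T$ one has $w(T)>0$ with
\[
\alpha(T)-2=1-\frac{B(T)}{w(T)}=\frac{2A(T)\tilde u(T)}{w(T)}>0,
\]
using $A(T)>0$ and $\tilde u(T)>0$ (the case $t_{B_2}=T$, where $B(T)=0$, being immediate).

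Finally the headline bound follows formally: by $(4)$ and the limits just computed, $\alpha$ is strictly decreasing on each of $(0,t_1)$, $(t_1,t_2)$, $(t_2,T)$, so its graph meets any horizontal line $\alpha=c$ in at most two points; since the zeros of $F(\cdot,\alpha)$ in $(0,T)$ are exactly these intersection points, $F(\cdot,\alpha)$ has at most two changes of sign.
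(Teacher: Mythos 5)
Your proof is correct and follows essentially the same route as the paper's: the paper writes $\alpha(t)=2+2A(t)\tilde u(t)/(2A(t)\tilde u(t)+B(t))$, which is exactly your $3-B/w$, anchors the sign pattern of $F$ by evaluating at $t_A$ instead of reading off the slope $-w(t)$, and likewise reduces $(4)$ to $\sgn(\alpha'(t))=\sgn(v(t,\tilde u(t)))$; the identity you flag as the main obstacle, $B'w-Bw'=-\tfrac12\,\tilde u\,v(t,\tilde u)$, does check out (substituting $\tilde u'=A\tilde u^3+B\tilde u^2$ gives $B'w-Bw'=-2\tilde u(A'B-AB')-2A^2B\tilde u^3-2AB^2\tilde u^2$, which is $-\tfrac12\tilde u\, v$), so you are in fact supplying a computation the paper only asserts. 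The one small slip is in $(6)$: condition $(C_1)$ permits $A(T)=0$ (e.g.\ the $T$-periodic trigonometric case), so you may only conclude $2A(T)\tilde u(T)/w(T)\geq 0$ and hence $\alpha(T)\geq 2$ rather than a strict inequality — which is all the statement requires.
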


\begin{proof}

First, we observe that $F(t,\alpha)=0$ if and only if $\alpha=\alpha(t)$ where
\[ \alpha(t)= 2+\dfrac{2A(t)\tilde u(t)}{2A(t)\tilde u(t)+B(t)}.\]
Then, by $(C_2)$, the function $\alpha(t)$ is smooth, so that $(1)$ holds. Moreover, since $2A(t)\tilde u(t)+B(t)$ 
changes sign at $t_1,t_2$, and $F(t_A,\alpha)=-B(t_A)>0$ then $(2)$ readily follows (see Fig.~\ref{fig:1}).

To prove $(3)$, it suffices to observe that $\sgn (\alpha'(t))=\sgn\left(v(t,\tilde u(t))\right)
$ and that $\sgn\left(v(t,\tilde u(t))\right) < 0$ by $(C_3)$. Moreover, as $A(t_i)\neq 0,\ i = 1,2$, we have $(5)$. 

By $(C_1)$ and Remark \ref{rem:signos}, $A(T)$ and $B(T)$ are non-negative and not simultaneously zero, so that
\[
\dfrac{2A(T)u(T,x)}{2A(T)u(T,x)+B(T)}\geq 0.
\]
Hence, $\alpha(T)\geq 2=\alpha(0)$, and $(6)$ follows.

Finally, as $F(t,\alpha)=0$ is the graph of $\alpha(t)$,
then $(1)-(6)$ imply that, for every fixed $\alpha$, the function $t\to F(t,\alpha)$ has at most two changes of sign in $(0,T)$.

\begin{figure}[h] 
\begin{center}
\begin{tikzpicture}[scale=1]
\clip (-4.5,-2) rectangle (4.5,2);
\draw[-] (-3.5,0) -- (3.5,0);
\draw[-] (-3.5,-2) -- (-3.5,2);
\draw[-] (3.5,-2) -- (3.5,2);
\draw[dashed] (-1,-2) -- (-1,2);
\draw[dashed] (1,-2) -- (1,2);
\draw[dashed] (-3.5,.19) -- (3.5,.19);
\draw[dashed] (-3.5,.81) -- (3.5,.81);
%\foreach \i in {-3,-2,...,3} {
\filldraw (-1,0) circle (2pt);\node[fill=white] at (-1-0.10,-0.55) {$t_1$};
\filldraw ( 1,0) circle (2pt);\node[fill=white] at ( 1+0.10,-0.55) {$t_2$};
\filldraw ( 0.2,0) circle (2pt);\node[fill=white] at ( 0+0.10,-0.55) {$t_A$};
\node[fill=white] at ( -2.3+0.10,1.55) {${\scriptstyle F(t,\alpha)>0}$};
\node[fill=white] at ( 2.3+0.10,-1.55) {${\scriptstyle F(t,\alpha)<0}$};
\node[fill=white] at ( -2.6+0.10,-1.55) {${\scriptstyle F(t,\alpha)<0}$};
%} 
\draw[smooth=0.7,ultra thick,oro!50!rojo] plot[domain=-3.5:-1.05,samples=200] ({\x},{0.5+\x/(\x*\x-1)});
\draw[smooth=0.7,ultra thick,oro!50!rojo] plot[domain=-0.95:0.95,samples=200] ({\x},{0.5+\x/(\x*\x-1)});
\draw[smooth=0.7,ultra thick,oro!50!rojo] plot[domain=1.05:3.5,samples=200] ({\x},{0.5+\x/(\x*\x-1)});
\node at (2.5,1.5) {$\alpha(t)$};
\end{tikzpicture}
\end{center}
\caption{Sketch of $\alpha(t)$.}\label{fig:1}
\end{figure}
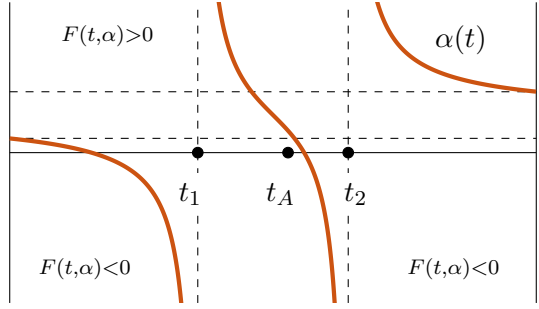

\end{proof}

Note that  $F(t,\alpha)=0$ always defines
the graph of a function, so that hypotheses of Theorem~\ref{theo:bif} 
are imposed to determine its properties. In 
particular, $(C_2)$ implies that it has two asymptotes,
and $(C_3)$ implies the monotonicity of $\alpha(t)$. 

A similar result holds for the zeros of $G(t,\beta)$, which
are determined by the zeros of a given function whose number of extrema and their nature are determined by $(C_2)$.

\begin{prop}\label{prop:beta}
There exist $\beta_0,\beta_1,\beta_2$ such that $G(t,\beta)$ has two changes of sign in $(0,T)$ for
every $\beta\in(\beta_1,\beta_2)$, $\beta\neq \beta_0$, and no zeros for
$\beta\not\in[\beta_1,\beta_2]$. More precisely,
\begin{enumerate}
\item $G(t,\beta)=0$ is the graph of a positive closed smooth function $\beta(t)$ defined for $t\in[0,T]$,
\item $G(t,\beta)>0$ for $\beta<\beta(t)$ and $G(t,\beta)<0$ for $\beta>\beta(t)$,
\item $\beta(t)$ has exactly two extrema: a maximum at $t_1$ and a minimum at $t_2$.
\end{enumerate}
\end{prop}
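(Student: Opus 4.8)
The plan is to observe that since $\tilde u(t)>0$, the whole zero set of $G(\cdot,\beta)$ is governed by the single quotient $\beta(t):=u_x(t,\tilde x)/\tilde u(t)$, and to read off all three assertions from the properties of $\beta$. First I would write $G(t,\beta)=\tilde u(t)\big(\beta(t)-\beta\big)$, so that $G(t,\beta)=0$ exactly when $\beta=\beta(t)$ and, because $\tilde u(t)>0$, the sign of $G(t,\beta)$ equals the sign of $\beta(t)-\beta$; this immediately yields $(2)$ once $\beta(t)$ is shown to be well defined. To establish $(1)$, set $w(t):=u_x(t,\tilde x)$ and note that $w$ solves the variational equation $w'=(3A(t)\tilde u^2+2B(t)\tilde u)\,w$ with $w(0)=1$, hence $w(t)=\exp\!\big(\int_0^t(3A\tilde u^2+2B\tilde u)\,ds\big)>0$ for all $t$. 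Thus $\beta=w/\tilde u$ is smooth and positive; it is closed because $\tilde u(T)=\tilde u(0)=\tilde x$ and, $\tilde u$ being singular, $w(T)=u_x(T,\tilde x)=1=w(0)$, so that $\beta(T)=\beta(0)$.

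The core computation is for $(3)$: differentiating the quotient and substituting $w'$ together with $\tilde u'=A\tilde u^3+B\tilde u^2$, the cubic-in-$\tilde u$ contributions cancel and one is left with the clean identity
\[
\beta'(t)=w(t)\,\big(2A(t)\tilde u(t)+B(t)\big).
\]
Since $w(t)>0$, the sign of $\beta'$ coincides with that of $2A(t)\tilde u(t)+B(t)$, whose sign pattern was already fixed in the paragraph preceding the statement: positive on $[0,t_1)$, negative on $(t_1,t_2)$, and positive on $(t_2,T]$, with simple zeros at $t_1,t_2$. Hence $\beta$ increases, then decreases, then increases, so its only critical points are a nondegenerate maximum at $t_1$ and a nondegenerate minimum at $t_2$, which is precisely $(3)$.

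Finally I would extract the opening count by setting $\beta_2:=\beta(t_1)$, $\beta_1:=\beta(t_2)$ and $\beta_0:=\beta(0)=\beta(T)$. Because the only extrema are the global maximum $\beta_2$ at $t_1$ and the global minimum $\beta_1$ at $t_2$, a horizontal level $\beta$ meets the graph of $\beta(t)$ not at all when $\beta\notin[\beta_1,\beta_2]$, giving no zeros of $G$, and transversally twice when $\beta\in(\beta_1,\beta_2)\setminus\{\beta_0\}$, with one crossing on each of the two monotone arcs spanning the level, hence two changes of sign. The value $\beta=\beta_0$ is excluded because the level then passes through the coincident endpoints $0\equiv T$; counting strictly inside $(0,T)$ leaves a single interior crossing on the decreasing arc $(t_1,t_2)$, hence only one change of sign. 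The main obstacle is not any one computation but the bookkeeping in this last step: one must recognize that $\beta(0)=\beta(T)=\beta_0$ is an interior value strictly between $\beta_1$ and $\beta_2$, which is exactly what forces the exclusion of $\beta_0$ from the two-sign-change regime.
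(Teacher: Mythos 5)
Your proposal is correct and follows essentially the same route as the paper: define $\beta(t)=u_x(t,\tilde x)/\tilde u(t)$, establish positivity and closedness from the variational equation and the singularity of $\tilde u$, and read the extrema off the sign of $2A(t)\tilde u(t)+B(t)$ via the identity $\beta'(t)=u_x(t,\tilde x)\bigl(2A(t)\tilde u(t)+B(t)\bigr)$, which is the same as the paper's $\beta'=(2A\tilde u^2+B\tilde u)\beta$. Your only deviations are cosmetic (swapping the labels of $\beta_1,\beta_2$ and spelling out the level-set counting, including why $\beta_0$ is excluded, which the paper leaves implicit).
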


\begin{proof}
First, we notice that $G(t,\beta) = 0$ if and only if $\beta = \beta(t)$, where 
\begin{equation}\label{eq:beta}
\beta(t)=\frac{ u_x(t,\tilde x)}{\tilde u(t)}.
\end{equation}
As $\tilde u(t) > 0$, we have that $\beta(t)$ is defined for all $t\in\mathbb{R}$. Furthermore, deriving in \eqref{eq:Abel} with respect to $x$ and using that $ u_x(0,x)=1$, we obtain that \[ u_x(t,\tilde x) = \operatorname{exp}\left(\int_0^t \left(3 A(t) \tilde u^2(t) + 2 B(t) \tilde u(t)\right) dt\right),\] and therefore $ u_x(t,\tilde x) > 0$. Thus, $\beta(t) > 0$. Since $\tilde u(t)$ is singular, both $\tilde u(t)$ and $ u_x(t,\tilde x)$ are closed, and then $\beta(0)=\beta(T)=:\beta_0$. Hence we conclude $(1)$. 

Since $G_{\beta}(t,\beta)=-u(t,x)<0$, then $(2)$ follows. 

To prove $(3)$, we first note that
\begin{equation}\label{ecu:beta_prima}
\beta'(t)=\left(2A(t)\tilde u^2(t)+B(t)\tilde u(t) \right)\beta(t).
\end{equation}
Since $\beta(t) > 0$, we conclude that $\beta'(t) = 0$ if and only if $t = t_1$ or $t = t_2$. Moreover, as $2A(t)\tilde u^2(t)+B(t)\tilde u(t) > 0$ for $t \in [0,t_1) \cup (t_2,T]$, we have that $\beta(t)$ has a maximum at $t_1$ and a minimum at $t_2$.

Finally, writing $\beta_1 = \beta(t_1)$ and $\beta_2 = \beta(t_2)$, we obtain that $G(t,\beta)$ has two changes of sign in $(0,T)$ for every $\beta\in(\beta_1,\beta_2),\ \beta\neq \beta_0$, and no zeros for $\beta\not\in[\beta_1,\beta_2]$. 

\begin{figure}[h] 
\begin{center}
\begin{tikzpicture}[scale=1]
\clip (-4.5,-1) rectangle (4.5,2);
\draw[-] (-3.5,0) -- (3.5,0);
\draw[-] (-3.5,-1) -- (-3.5,2);
\draw[-] (3.5,-1) -- (3.5,2);
\draw[dashed] (-1,-1) -- (-1,2);
\draw[dashed] (1,-1) -- (1,2);
\draw[dashed] (-3.5,1) -- (3.5,1);
%\foreach \i in {-3,-2,...,3} {
\filldraw (-1,0) circle (2pt);\node[fill=white] at (-1-0.10,-0.55) {$t_1$};
\filldraw ( 1,0) circle (2pt);\node[fill=white] at ( 1+0.10,-0.55) {$t_2$};
%} 
\draw[smooth=0.7,ultra thick,oro!50!rojo] plot[domain=-3.5:-1,samples=200] ({\x},{-2*(\x+1)*(\x+1)/2.5/2.5/3+2/3+1});
\draw[smooth=0.7,ultra thick,oro!50!rojo] plot[domain=-1:1,samples=200] ({\x},{(\x*\x*\x/3-\x)+1});
\draw[smooth=0.7,ultra thick,oro!50!rojo] plot[domain=1:3.5,samples=200] ({\x},{2*(\x-1)*(\x-1)/2.5/2.5/3-2/3+1});
\node at (0.2,1.5) {$\beta(t)$};
\node at (-3.5-0.4,1) {$\beta_0$};
\end{tikzpicture}
\end{center}
\caption{Sketch of $\beta(t)$.}\label{fig:beta}
\end{figure}
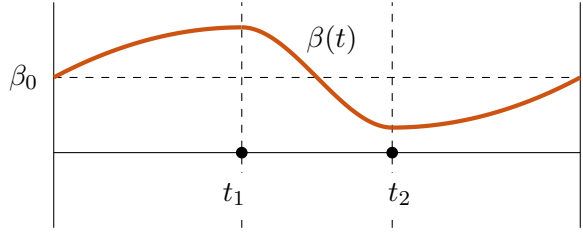

\end{proof}

Recall that, by Proposition~\ref{prop:bif0},
\[\sgn( u_{xx}(T,\tilde x))= \sgn\left(\int_{0}^{T} F(t,\alpha)G(t,\beta)\,dt \right).\]
Therefore, in order to complete the proof of Theorem~\ref{theo:bif}, it only remains to prove that there exist $\alpha,\beta$ such that the 
changes of sign of $F(t,\alpha)$ and $G(t,\beta)$ coincide. 

\begin{prop}\label{prop:bif3}
There exist $\alpha,\beta$ such that the changes of sign of $F(t,\alpha)$ and $G(t,\beta)$ coincide. Moreover, for these $\alpha$ and $\beta$,
\[ \int_0^T F(t,\alpha)G(t,\beta)\,dt  < 0.\]
\end{prop}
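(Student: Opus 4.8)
The plan rests on Proposition~\ref{prop:bif0}: since $\sgn(u_{xx}(T,\tilde x))=\sgn\big(\int_0^T F(t,\alpha)G(t,\beta)\,dt\big)$ for \emph{every} $\alpha,\beta$, it is enough to produce one pair $(\alpha,\beta)$ for which $F(\cdot,\alpha)$ and $G(\cdot,\beta)$ vanish at exactly the same points of $(0,T)$ and carry opposite signs on each subinterval they determine; then $F(\cdot,\alpha)\,G(\cdot,\beta)\le 0$ on $[0,T]$ and, not being identically zero, the integral is strictly negative, which by Remark~\ref{rem:signos} gives $u_{xx}(T,\tilde x)<0$. By Propositions~\ref{prop:alpha} and~\ref{prop:beta}, the zeros of $F(\cdot,\alpha)$ form the level set $\{\alpha(t)=\alpha\}$ and those of $G(\cdot,\beta)$ the level set $\{\beta(t)=\beta\}$, so the problem becomes: find two points of $(0,T)$ at which $\alpha(\cdot)$ and $\beta(\cdot)$ take equal values simultaneously.

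I would single out the point $n_0\in(t_1,t_2)$ with $\beta(n_0)=\beta_0$, which exists and is unique because $\beta$ decreases strictly from $\beta_1>\beta_0$ to $\beta_2<\beta_0$ on $[t_1,t_2]$; it is the unique interior zero of $G(\cdot,\beta_0)$. The argument then splits according to the position of $\alpha(n_0)$ relative to $[2,\alpha(T)]$, recalling $\alpha(0)=\alpha(t_A)=2\le\alpha(T)$.

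In the middle case $\alpha(n_0)\in[2,\alpha(T)]$ I would take $\beta=\beta_0$ and $\alpha=\alpha(n_0)$. Since $\beta>\beta_0$ on $(0,n_0)$ and $\beta<\beta_0$ on $(n_0,T)$, the function $G(\cdot,\beta_0)=\tilde u\,(\beta(\cdot)-\beta_0)$ is positive on $(0,n_0)$ and negative on $(n_0,T)$; and the per-branch monotonicity of $\alpha$ together with $2\le\alpha(n_0)\le\alpha(T)$ forces $F(\cdot,\alpha(n_0))$ to be negative on $(0,n_0)$ and positive on $(n_0,T)$. Both thus have a single sign change, at $n_0$, with opposite signs, so their product is $\le 0$. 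In the extreme case $\alpha(n_0)<2$ (equivalently $n_0>t_A$) I would, for $s\in(t_A,t_2)$, let $P(s)\in(0,t_1)$ be the unique point with $\alpha(P(s))=\alpha(s)$ and set $g(s)=\beta(P(s))-\beta(s)$; its limits are $\beta_0-\beta(t_A)<0$ as $s\to t_A^+$ (here $\beta(t_A)>\beta_0$ because $t_A<n_0$) and $\beta_1-\beta_2>0$ as $s\to t_2^-$, so the intermediate value theorem yields $s^*$ with $\beta(P(s^*))=\beta(s^*)=:\beta^*>\beta_0$. Then $\{P(s^*),s^*\}$ is simultaneously the zero set of $F(\cdot,\alpha(s^*))$ and of $G(\cdot,\beta^*)$, and the two functions alternate in the opposite patterns $+,-,+$ and $-,+,-$. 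The remaining case $\alpha(n_0)>\alpha(T)$ is symmetric: one pairs $s\in(t_1,R_*)$, where $\alpha(R_*)=\alpha(T)$, with the partner zero $P'(s)\in(t_2,T)$ on the third branch, the relevant endpoint sign being secured by $\beta(R_*)<\beta_0$.

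The delicate point is not the continuity/IVT step, which is routine, but the sign bookkeeping: one must check that in each case the common zeros split $(0,T)$ into subintervals on which $F$ and $G$ carry \emph{opposite} signs, and that the three cases above are genuinely exhaustive. Both hinge on the sharp qualitative data already secured in Propositions~\ref{prop:alpha} and~\ref{prop:beta}—strict per-branch monotonicity of $\alpha$ with the anchors $\alpha(t_A)=2$, $\alpha(0)=2\le\alpha(T)$, and the extremal values $\beta(t_1)=\beta_1>\beta_0>\beta_2=\beta(t_2)$—and on keeping careful track of which of the three branches $[0,t_1)$, $(t_1,t_2)$, $(t_2,T]$ each zero inhabits.
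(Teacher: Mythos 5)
Your proof is correct and follows essentially the same route as the paper: the same reduction via Proposition~\ref{prop:bif0}, the same distinguished point $t_0$ (your $n_0$) with $\beta(t_0)=\beta_0$, the same split into three cases according to the position of $\alpha(t_0)$ relative to $[\alpha(0),\alpha(T)]$, and the same intermediate-value argument in the two extreme cases. The only (inessential) difference is that you pair the two branches by equal $\alpha$-values and look for a coincidence of $\beta$-values, whereas the paper pairs by equal $\beta$-values via the functions $T_1,T_2$ and looks for a coincidence of $\alpha$-values; your closed middle interval also covers the boundary subcases $\alpha(t_0)\in\{2,\alpha(T)\}$ that the paper's strict inequalities formally leave out.
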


\begin{proof}

Let $\beta(t)$ be the closed smooth function defined
by~\eqref{eq:beta} and write $\beta_0 = \beta(0)$.  By
Proposition~\ref{prop:beta}, $\beta(t)$ has a maximum at $t_1$ and a
minimum at $t_2$. Set $\beta_1 = \beta(t_1)$ and $\beta_2 =
\beta(t_2)$, and let $t_0$ be the unique value in $(0,T)$ with
$\beta(t_0) = \beta_0$.

\medskip 

We distinguish three cases in accordance with the relative
position of $\alpha(0)=2$, $\alpha(t_0)$, and $\alpha(T) \geq 2$.

\begin{itemize}
\item If $\alpha(t_0) \in (\alpha(0),\alpha(T))$, then take $\alpha =\alpha(t_0)$, $\beta=\beta_0$. Then $F(t,\alpha)$ and $G(t,\beta)$ only change sign in $(0,T)$ at $t_0$. 
By Proposition~\ref{prop:alpha}, $F(t,\alpha) < 0$ for $t \in [0,t_0)$ and $F(t,\alpha) > 0$ for $t \in (t_0,T]$. Moreover,
by Proposition~\ref{prop:beta}, $G(t,\alpha) > 0$ for $t \in [0,t_0)$ and $G(t,\alpha) < 0$ for $t \in (t_0,T]$. Consequently, $F(t,\alpha)G(t,\beta) < 0$
for all $t \in [0,t_0) \cup (t_0,T]$, and the result holds. 
 
\item $\alpha(t_0)<\alpha(0)$. Since $\beta(t)$ has a maximum at $t_1$, and
 is strictly monotonic for $t\in(0,t_1) \cup (t_1, t_0)$, there exist two continuous monotonic functions
$T_1,T_2$, defined in $(\beta_0,\beta_1)$ such that $T_1(\beta(t))=t,\ t \in (0,t_1)$ and $T_2(\beta(t))=t,\ t \in (t_1,t_0)$. Notice that 
$0 < T_1(\beta)<t_1<T_2(\beta)<t_0,\ \beta \in (\beta_0, \beta_1)$, and that $T_1, T_2$ have opposite monotonicity. 

Now, let us define the
continuous function
\[
d(\beta)=\alpha(T_1(\beta))-\alpha(T_2(\beta)),\quad \beta\in (\beta_0,\beta_1).
\]
Since $\alpha(t)\to\pm\infty$ as $t\to t_1^{\pm}$, then $\lim_{\beta\to\beta_1}d(\beta)=-\infty$. 
On the other hand, $d(\beta_0)=\alpha(0)-\alpha(t_0)>0$. By continuity, there exists $\bar \beta$ such that $d(\bar \beta)=0$. For $\alpha=\alpha(T_1(\bar \beta))$ and $\beta=\bar \beta$, $F,G$
have the same changes of sign: exactly two and both in $(0,t_0)$. 

On the one hand, $\alpha=\alpha(T_1(\bar \beta))<\alpha(0)=2$, so that $F(t,\alpha)>0$ for $t$ close to zero by Proposition \ref{prop:alpha}. On the other hand, for $t$ close to zero, $\beta(t)<\beta=\bar \beta$, so that $G(t,\alpha)<0$  by Proposition \ref{prop:beta}. Consequently, $F(t,\alpha)G(t,\beta)\leq 0$, and
\[ \int_0^T F(t,\alpha)G(t,\beta)\,dt <0.\]

\item $\alpha(t_0)>\alpha(T)$. Since $\beta(t)$ has a minimum at $t_2$, and
 is strictly monotonic for $t \in(t_0,t_2) \cup (t_2,T)$, then there exist two continuous monotonic functions $T_1,T_2$, defined in $(\beta_2,\beta_0)$,
such that $T_1(\beta(t))=t,\ t \in (t_0,t_2)$ and $T_2(\beta(t))=t,\ t \in (t_2,T)$. Notice that
$t_0 < T_1(\beta)<t_2<T_2(\beta)<T,\ \beta \in (\beta_2, \beta_0)$, and that $T_1,T_2$ have opposite monotonicity. Now, if $d(\beta) = \alpha(T_1(\beta))-\alpha(T_2(\beta)),\ \beta \in (\beta_2,\beta_0)$, then $\lim_{\beta\to\beta_2} d(\beta)=-\infty$ and $d(\beta_0)=\alpha(t_0)-\alpha(T)>0$, and we conclude as in the previous case.
 \end{itemize}
\end{proof}

%%%%%%%%%%%%%%%%%%%%%%%%%%%%%%%%%%%%%%%%%%%%%%%%%%%%%%%%%%%%%%%%%%
%%%%%%%%%%%%%%%%%%%%%%%%%%%%%%%%%%%%%%%%%%%%%%%%%%%%%%%%%%%%%%%%%%
\section{Sufficient Criteria}

The following results establish sufficient conditions for $(C_2)$ and $(C_3)$ to be satisfied without assuming knowledge of the positive singular closed solutions of \eqref{eq:Abel}. 

The first result is an adaptation of Proposition 5 of \cite{BFG2} to the case $\gamma=1$. In order to obtain $(C_2)$, we define 
\[
\phi(t)=-B(t)/(2A(t)).
\]
By condition $(C_1)$ and Remark \ref{rem:signos}, $\phi(t)\geq 0$ if and only if $t\in [0,t_{B_1}]\cup [t_{A},t_{B_2}]$.

\begin{figure}[h] 
\begin{center}
\begin{tikzpicture}[scale=3]
\clip (-0.5,-1) rectangle (2.5,1);
\draw[-] (0,0) -- (2,0);
\draw[-] (0,-1) -- (0,1);
\draw[-] (2,-1) -- (2,1);
\draw[dashed] (.3,-1) -- (.3,1); \node[fill=white] at (.3+0.05,-0.2) {\small $t_{B_1}$};
\draw[dashed] (.8,-1) -- (.8,1); \node[fill=white] at (0.8+0.05,-0.2) {\small $t_{A}$};
\draw[dashed] (1.4,-1) -- (1.4,1); \node[fill=white] at (1.4+0.05,-0.2) {\small $t_{B_2}$};
\draw[smooth=0.7,ultra thick,oro!50!rojo] plot[domain=0.01:0.79,samples=200] ({\x},{-((\x-1.4)*(\x-0.3))/(20*(\x-0.8)*\x)});
\draw[smooth=0.7,ultra thick,oro!50!rojo] plot[domain=0.81:2,samples=200] ({\x},{-((\x-1.4)*(\x-0.3))/(20*(\x-0.8)*\x)});
\node at (1.75,0.75) {\small $\phi(t)$};
\filldraw (.3,0) circle (1pt);
\filldraw (.8,0) circle (1pt);
\filldraw (1.4,0) circle (1pt);
\end{tikzpicture}
\end{center}
\caption{Sketch of $\phi(t)$.}\label{fig:2}
\end{figure}
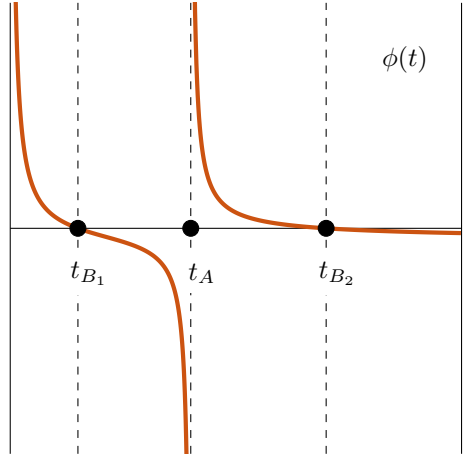

Now, we impose some sufficient conditions in order that any positive bounded solution $u(t,x)$ crosses the graph of $\phi$ in at most two points. These conditions are quite restrictive, but can be verified computationally.

\begin{prop}[\cite{BFG2}]\label{prop:upper}
Let $u(t,x)$ be a positive singular closed solution of \eqref{eq:Abel} and suppose that $(C_1)$ holds.
	Let $J_1=(0,t_{B_1})$ and $J_2=(t_{A},t_{B_2})$. If the function $P$, defined in \eqref{eq:upper}, has at most one zero in each $J_i,\ i = 1,2$, then $u(t,x)-\phi(t)$ has  a unique simple zero in each $J_i, i=1,2$. I.e., condition $(C_2)$ holds.
\end{prop}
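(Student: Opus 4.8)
The plan is to convert condition $(C_2)$ into a zero‑counting statement for the difference
\[
w(t):=u(t,x)-\phi(t),\qquad \phi(t)=-\frac{B(t)}{2A(t)},
\]
and then to control the zeros of $w$ through the sign of $P$. The first step is the algebraic reduction: since $2A(t)u(t,x)+B(t)=2A(t)\,w(t)$, on the open set where $A\neq0$ the zeros of $2Au+B$ are exactly the zeros of $w$. Because $u>0$ while $\phi<0$ on $(t_{B_1},t_A)\cup(t_{B_2},T]$ and $\phi=0$ at $t_{B_1},t_{B_2}$, a zero of $w$ can only occur where $\phi>0$, i.e.\ in $J_1\cup J_2$; and at the points $0,t_A,T$ one has $A=0$, where $2Au+B=B\neq0$ by $(C_1)$ and Remark~\ref{rem:signos}. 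Hence every zero of $2Au+B$ in $[0,t_A]$ lies in $J_1$ and every zero in $[t_A,T]$ lies in $J_2$, so $(C_2)$ is equivalent to the assertion that $w$ has a unique simple zero in each $J_i$.

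Next I would settle existence and produce the key derivative identity. Using $A(0)=0$, $A'(0)<0$, $B(0)>0$ one checks $\phi(t)\to+\infty$ as $t\to0^+$ and as $t\to t_A^+$, while $\phi(t_{B_1})=\phi(t_{B_2})=0$; since $u$ is continuous and positive on the compact interval $[0,T]$, hence bounded, $w$ is negative near the left end of each $J_i$ and positive at its right end, and the Intermediate Value Theorem gives at least one zero in each $J_i$. The engine of the proof is obtained by differentiating $w$ along \eqref{eq:Abel}: at any zero $t^\ast$ of $w$ (so that $u(t^\ast)=\phi(t^\ast)=-B/(2A)$), substituting $u'=Au^3+Bu^2$ and simplifying with \eqref{eq:upper} yields
\[
w'(t^\ast)=-\frac{P(t^\ast)}{8\,A(t^\ast)^2}.
\]
In particular $\sgn\big(w'(t^\ast)\big)=-\sgn\big(P(t^\ast)\big)$, so a zero of $w$ is simple exactly when $P\neq0$ there, and an upward (resp.\ downward) crossing forces $P<0$ (resp.\ $P>0$) at that point.

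The uniqueness step is where the hypothesis on $P$ enters. Assuming first that all zeros of $w$ are simple, two consecutive zeros must be of opposite crossing type (the sign of $w$ between them is constant), so by the identity above $P$ must change sign between them; as $P$ has at most one zero in $J_i$, this gives at most two simple zeros, and since $w$ passes from negative to positive across $J_i$ the number of sign changes is odd, leaving exactly one simple zero. This is the whole content of the statement once degenerate zeros are excluded.

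The delicate point, and the step I expect to be the main obstacle, is precisely to rule out a non‑simple zero $t^\ast$, where $w(t^\ast)=w'(t^\ast)=0$ and hence $P(t^\ast)=0$, so that the single admissible zero of $P$ is consumed. I would resolve this by iterating the identity: if $w$ has a zero of order $k$ at $t^\ast$, then differentiating $8A^2w'=2Aw\,R(t,w)-P$ (where $R=4A^2w^2-2ABw-B^2$) and using that all lower derivatives of $w$ vanish gives
\[
w^{(k)}(t^\ast)=-\frac{P^{(k-1)}(t^\ast)}{8\,A(t^\ast)^2},
\]
so $P$ has a zero of order exactly $k-1$ at $t^\ast$. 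A genuine tangency (a local extremum of value $0$) forces $k$ even, whence $P$ has an odd‑order zero and therefore changes sign at $t^\ast$; comparing the resulting one‑sided sign of $P$ near $t^\ast$ with the sign of $P$ dictated at the neighbouring simple crossing of $w$ produces a second sign change, i.e.\ a second zero of $P$ in $J_i$, contradicting the hypothesis. This eliminates the two‑zero configuration together with any higher count, leaving a single zero, which is simple because $P\neq0$ there. This establishes $(C_2)$.
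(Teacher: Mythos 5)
Your proof is correct and follows essentially the same route as the paper: the identity $w'(t^\ast)=-P(t^\ast)/(8A(t^\ast)^2)$ at a crossing is exactly the paper's computation $\phi'-B\phi^2-A\phi^3=P/(8A^2)$, your existence step (boundedness of $u$ plus $\phi$ sweeping $(0,\infty)$ on each $J_i$, then the Intermediate Value Theorem) is the paper's, and your ``alternating crossing directions force a sign change of $P$'' argument is the paper's upper/lower-solution argument restated pointwise. The only substantive difference is your explicit treatment of non-simple zeros via $w^{(k)}(t^\ast)=-P^{(k-1)}(t^\ast)/(8A(t^\ast)^2)$, which the paper omits entirely and which is a welcome refinement (indeed the paper never actually argues that the unique intersection is simple).
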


\begin{proof}
Firstly, we observe that
\begin{align*}
\phi' - B \phi^2 - A  \phi^3 & = \frac{B A '-B 'A }{2A ^2} - \frac{ B ^3}{4A ^2} + 	\frac{ B ^3}{8A ^2} \\  & = \frac{4(B A '-B 'A ) - B ^3 }{8A ^2} = \frac{P}{8 A^2}.
\end{align*}
If $\phi' - B \phi^2 - A  \phi^3$ has no zeros in $J_i$, then $\phi$ is an upper or lower solution of \eqref{eq:Abel} and therefore the graphs of $u(t,x)$ and $\phi$ coincide in at most one point.

If $\phi' - B \phi^2 - A  \phi^3$ has one zero in $J_i$, then $ \phi$ changes from an upper (resp.~lower) solution to a lower (resp.~upper) solution of \eqref{eq:Abel} in that interval. In any case, since $u(t,x)$ is bounded and the graph of $\phi$ in $J_i$ goes from zero to infinity, $u(t,x)$ intersects $\phi$ at one point in $J_i$.

Therefore,  $2 A(t) u(t,x) + B(t)$ has at most one zero in $(0,t_{B_1})$ and at most one zero in $(t_A,t_{B_2})$.

Finally, since \[2 A(0) u(0,x) + B(0) = B(0) > 0,\] \[2 A(t_{B_1}) u(t_{B_1},x) + B(t_{B_1}) = A(t_{B_1}) u(t_{B_1},x) < 0,\] we have that $2 A(t) u(t,x) + B(t)$ has at least one zero in $(0,t_{B_1}),$ and since 
\[2 A(t_A) u(t_A,x) + B(t_A) = B(t_A) < 0,\] \[2 A(t_{B_2}) u(t_{B_2},x) + B(t_{B_2}) = A(t_{B_2}) u(t_{B_2},x) > 0,\] we have that $2 A(t) u(t,x) + B(t)$ has at least one zero in $(t_A,t_{B_2}).$
\end{proof}

We now obtain a sufficient condition for $(C_3)$ to hold, which can be
computed.  Note that $(C_3)$ is equivalent to imposing that the graph
of every singular positive closed solution $\tilde u$ is
contained in the region $v(t,x)< 0$, where $v(t,x)$ is the function
defined in \eqref{ecu:v}. In order to control the intersections of the
solutions with the complementary region $v(t,x)\geq 0$, consider the
derivative of the solutions with respect to the vector field
\eqref{eq:Abel}, i.e.,
\[
\dot v(t,x)= v_t(t,x)+v_x(t,x)(A(t)x^3+B(t)x^2). 
\]
Controlling the common zeros of $v$ and $\dot v$, we obtain a sufficient condition for $(C_3)$ to hold.

Let
\[
\begin{split}
v^{-1}(0)&=\{(t,x): 0<t<T, x>0, v(t,x)=0 \},\\
\dot v^{-1}(0)&=  \{(t,x): 0<t<T, x>0, \dot v(t,x)=0 \},
\end{split}\]
and denote $S = [0,T]\times [0,\infty)$.
 
\begin{prop}\label{PropC3}
If $v(t,0)=A'(t)B(t)-A(t)B'(t)<0$ for all $t\in[0,T]$, $v(0,x)<0$ and $v(T,x)<0$ for all $x\geq 0$, and $v^{-1}(0) \cap \dot v^{-1}(0)=\emptyset$, then condition $(C_3)$ holds.	
\end{prop}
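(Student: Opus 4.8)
The plan is to show, adopting the normalization of Remark~\ref{rem:signos}, that the scalar function $g(t):=v(t,\tilde u(t))$ is strictly negative on all of $[0,T]$, since this is exactly condition $(C_3)$. The whole argument is a confinement argument for the graph of $\tilde u$ inside the region $\{v<0\}$, and the bridge between the pointwise hypotheses on $v$ and $\dot v$ and the behaviour of $g$ is the elementary identity
\[
g'(t)=v_t(t,\tilde u(t))+v_x(t,\tilde u(t))\,\tilde u'(t)=\dot v(t,\tilde u(t)),
\]
valid because $\tilde u$ solves \eqref{eq:Abel}. First I would record the boundary data: by hypothesis $g(0)=v(0,\tilde x)<0$ and $g(T)=v(T,\tilde x)<0$, and since $\tilde u(t)>0$ while $v(t,0)<0$, the graph of $\tilde u$ starts and ends strictly inside $\{v<0\}$ and never reaches the lower edge $x=0$.

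The key qualitative input is the transversality hypothesis $v^{-1}(0)\cap\dot v^{-1}(0)=\emptyset$. I would first use it to rule out tangential contact: if $g$ had an extremum $t_\ast\in(0,T)$ with $g(t_\ast)=0$, then $(t_\ast,\tilde u(t_\ast))$ would lie in $v^{-1}(0)$ and, as $g'(t_\ast)=\dot v(t_\ast,\tilde u(t_\ast))=0$, also in $\dot v^{-1}(0)$, a contradiction; hence every zero of $g$ is a transversal crossing. More globally, since $\dot v\neq0$ forces $\nabla v\neq0$ on $v^{-1}(0)$, the level set $v^{-1}(0)\cap\operatorname{int}(S)$ is a smooth embedded $1$-manifold to which the suspended field $\partial_t+(A x^3+B x^2)\partial_x$ is everywhere transverse. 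Consequently $\dot v$ is continuous and nonvanishing, hence of \emph{constant sign}, along each connected component of $v^{-1}(0)$, so any solution may cross a given component only in one fixed direction.

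Next I would argue by contradiction: suppose $g\ge0$ somewhere; by the previous paragraph $g$ is then strictly positive on some subinterval, so the graph of $\tilde u$ enters the open region $\{v>0\}$. The decisive step is a topological description of $\{v\ge0\}\cap S$. Rewriting \eqref{ecu:v} as $v(t,x)=4A(t)^2B(t)\,(x-\phi(t))^2+P(t)$ with $\phi=-B/(2A)$, and using $(C_1)$, Remark~\ref{rem:signos}, the identity $P=v(\cdot,0)-B^3$, and the hypotheses $v(t,0)<0$, $v(0,\cdot)<0$, $v(T,\cdot)<0$, I would show that for each fixed $t$ the slice $\{x\ge0:\,v(t,x)\ge0\}$ is either empty, or a single unbounded ray (over the range where $B>0$, which in fact forces $t_{B_2}=T$), or a single bounded interval (over part of $(t_A,t_{B_2})$ where $B<0$ and $P>0$). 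Assembling the slices, each connected component $W$ of $\{v\ge0\}\cap\{x\ge0\}$ is bounded by \emph{exactly one} connected component $C$ of $v^{-1}(0)$: either an arc escaping to $x=\infty$ at both ends, or a closed loop that pinches off where $P=0$. Given this, the confinement is immediate. If the orbit enters $W$ and later leaves it, it crosses $C$ inward and then outward, i.e. in opposite directions, contradicting the constant sign of $\dot v$ on $C$; if instead it enters $W$ and never leaves within $[0,T]$, then $(T,\tilde x)\in\overline W$, so $v(T,\tilde x)\ge0$, contradicting $v(T,\cdot)<0$. Since the orbit cannot escape to infinity (being bounded) and starts outside $W$, it can never enter $W$. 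Hence $g<0$ on $[0,T]$ and $(C_3)$ holds.

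I expect the main obstacle to be precisely this last topological classification: translating the sign hypotheses into the statement that each component of $\{v\ge0\}$ in the positive quadrant has a single boundary component of $v^{-1}(0)$, so that the constant-sign-of-$\dot v$ mechanism applies uniformly. The transversality reductions in the second paragraph are routine; the genuine care lies in excluding "channel''-type components whose boundary consists of two distinct arcs, through which a bounded orbit could enter through one arc and exit through the other in mutually consistent directions. Ruling this out is exactly where the explicit quadratic-in-$x$ form of $v$, together with $v(t,0)<0$, $v(0,\cdot)<0$, and $v(T,\cdot)<0$, is essential.
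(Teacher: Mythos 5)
Your core mechanism is the same as the paper's: the hypothesis $v^{-1}(0)\cap\dot v^{-1}(0)=\emptyset$ makes $\dot v$ nonvanishing, hence of constant sign, on each connected component of $v^{-1}(0)$, so the flow of \eqref{eq:Abel} can cross each such component in only one direction, while the sign conditions $v(t,0)<0$, $v(0,x)<0$, $v(T,x)<0$ place both endpoints $(0,\tilde x)$ and $(T,\tilde x)$ of the orbit in a single connected component $W$ of $\{v<0\}$. The identity $\tfrac{d}{dt}v(t,\tilde u(t))=\dot v(t,\tilde u(t))$ and the transversality reductions in your second paragraph are correct.

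The gap is exactly where you locate it. Your confinement step is routed through the claim that every connected component of $\{v\ge 0\}\cap S$ has a \emph{single} boundary component of $v^{-1}(0)$; this is only asserted via a slice-by-slice description that is not assembled into a global statement, and at least one parenthetical is unjustified (for $t\in(0,t_{B_1})$ one has $B(t)>0$ and $A(t)\neq 0$, so $v(t,\cdot)$ is an upward parabola and the slice $\{x\ge0:v(t,x)\ge0\}$ is an unbounded ray irrespective of whether $t_{B_2}=T$). Since you yourself flag this classification as the main obstacle, the argument is incomplete as written. More to the point, the classification is unnecessary: the paper applies the separation argument to each connected component $C$ of $v^{-1}(0)$ \emph{individually}. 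After one-point compactification of $[0,T]\times[0,\infty)$, $C$ is a Jordan curve splitting the space into two regions, one of which contains the connected set $W$ and hence both endpoints of the bounded orbit; since all crossings of $C$ go the same way, once the orbit crossed $C$ it could never return to the region containing $(T,\tilde x)$, so it never crosses $C$ at all. This dissolves the ``channel'' worry without describing $\{v\ge0\}$: even if a component of $\{v\ge0\}$ had two boundary curves $C_1,C_2$, entering through $C_1$ already traps the orbit on the wrong side of the Jordan curve $C_1$, and exiting through $C_2$ cannot bring it back across $C_1$. Replacing your slice classification by this per-curve separation argument closes the gap.
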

\begin{proof}
	 $v^{-1}(0) \cap \dot v^{-1}(0)=\emptyset$, the set $v^{-1}(0)$ has no 
	singular points, so it consists of regular curves. By compactification
		of the region $[0,T]\times [0,+\infty)$ into a point, we may assume that they are closed, so that, by the Jordan curve theorem, each of these regular curves
	divides the space into two regions.
	Since $v(t,0),v(0,x),v(T,x)<0$ for all $t\in[0,T]$ and $x\geq 0$, then there
		is a connected region $W$ in $v(t,x)<0$ containing the points of the form
		$(t,x),(0,x),(T,x)$, for all $t\in[0,T]$ and $x\geq 0$.

	From the hypothesis, we have that
	$$
	\dot v(t,x)=\langle (v_t(t,x),v_x(t,x)), (1,A(t)x^3+B(t)x^2) \rangle
	$$
	has definite sign on $v^{-1}(0)$,
	where $\langle\cdot.\cdot\rangle$ is the ordinary scalar product in $\mathbb{R}^2$ and $(1,A(t)x^3+B(t)x^2)$ is the vector field defined by \eqref{eq:Abel}. By the Jordan curve theorem, we can fix an orientation for any given regular curve contained in the set $v^{-1}(0)$, and the field has either that same orientation at each point of the curve or the opposite orientation at each point of the curve. Hence, one of the regions into which the curve divides the space is positively invariant and the other negatively invariant.

	In any case, since any bounded solution $u(t,x)$ of \eqref{eq:Abel} starts and ends in the connected region $W$ 
	the graph of $u(t,x)$  does not intersect $v^{-1}(0)$, and consequently condition $(C_3)$ holds.	
\end{proof}	

The following result provides a simple sufficient condition that implies $v^{-1}(0) \cap \dot v^{-1}(0)=\emptyset$, which will be used in the examples. 

\begin{coro}\label{CorC3}
Let
	\begin{equation}\label{eq:elim}
		Q(t)=B(t) (A(t) B''(t)-B(t)A''(t)) + 3 B'(t)(B(t)A'(t) - A(t)B'(t)).
	\end{equation}
If $Q(t)$ has no zeros in $(0,T)$ or $v(\bar t,x)=0$ does not have positive solutions for each zero $\bar t$ of $Q(t)$ in $(0,T)$
then $v^{-1}(0) \cap \dot v^{-1}(0)=\emptyset$.
\end{coro}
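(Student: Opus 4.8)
The plan is to eliminate the variable $x$ between the two equations $v(t,x)=0$ and $\dot v(t,x)=0$, and to recognize $Q(t)$ as the resulting condition on $t$ alone. First I would rewrite $v$ as an explicit polynomial in $x$: expanding the square in \eqref{ecu:v} and using that $B^3+P=4(BA'-AB')$ by \eqref{eq:upper}, one obtains
\[
v(t,x)=4\bigl(A^2B\,x^2+AB^2\,x+(BA'-AB')\bigr),
\]
a quadratic in $x$ with leading coefficient $4A^2B$. Differentiating along the field \eqref{eq:Abel}, the function $\dot v=v_t+v_x(Ax^3+Bx^2)$ is a quartic polynomial in $x$ whose coefficients are expressions in $A,B$ and their first two derivatives.

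The heart of the argument is a single polynomial identity obtained by reducing $\dot v$ modulo $v$ in the variable $x$. Carrying out the division (three elimination steps, clearing the leading coefficient by the factor $B$ at the last step so as to remain polynomial), I expect to arrive at
\[
B(t)\,\dot v(t,x)=\bigl(2A(t)B(t)\,x^2+B(t)^2\,x+3B'(t)\bigr)\,v(t,x)-4\,Q(t),
\]
an identity valid for all $(t,x)$. The key structural fact — and the step I expect to require the most care — is that the remainder of this reduction is independent of $x$: a priori a quartic reduced modulo a quadratic leaves a linear remainder, so the vanishing of the $x$-coefficient of the remainder is precisely what forces the eliminant to collapse to the clean expression $-4Q(t)$ rather than a genuinely bivariate obstruction. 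Checking that cancellation is the main computational obstacle, although it is entirely routine.

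Granting the identity, the conclusion is immediate. Suppose, for contradiction, that there is a point $(\bar t,\bar x)\in v^{-1}(0)\cap\dot v^{-1}(0)$, so that $0<\bar t<T$, $\bar x>0$, $v(\bar t,\bar x)=0$ and $\dot v(\bar t,\bar x)=0$. Evaluating the identity at $(\bar t,\bar x)$ makes both terms on the right-hand side vanish, which forces $Q(\bar t)=0$; note this remains valid even if $A(\bar t)$ or $B(\bar t)$ vanishes, since the identity is polynomial and involves no division. Thus $\bar t$ is a zero of $Q$ in $(0,T)$ at which $v(\bar t,\cdot)=0$ admits the positive root $\bar x$. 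Each of the two alternative hypotheses rules this out: if $Q$ has no zero in $(0,T)$ there is nothing to exclude, while if $v(\bar t,x)=0$ has no positive solution for every zero $\bar t$ of $Q$ in $(0,T)$, then the existence of $\bar x>0$ is impossible. In either case we reach a contradiction, and hence $v^{-1}(0)\cap\dot v^{-1}(0)=\emptyset$, as claimed.
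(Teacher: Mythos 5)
Your proof is correct and takes essentially the same approach as the paper: both rest on the polynomial identity (in $x$) expressing $Q(t)$ as a combination of $v$ and $\dot v$, so that a common zero with $\bar x>0$ forces $Q(\bar t)=0$, contradicting the hypothesis. Your identity $B(t)\,\dot v(t,x)=\left(2A(t)B(t)x^2+B(t)^2x+3B'(t)\right)v(t,x)-4Q(t)$ checks out by direct expansion (and in fact carries the correct constant, whereas the factor in the paper's displayed identity appears to be off by a harmless multiplicative constant).
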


\begin{proof}
Observe that 
\[Q(t)=4 \left(2 A(t) B(t) x^2+B^2(t) x+3 B'(t)\right) v(t,x)-4 B(t)\, \dot v(t,x).\] Thus, if $v(t,x) = \dot v(t,x) = 0$ then $Q(t) = 0$. 
So, $v^{-1}(0) \cap \dot v^{-1}(0)=\emptyset$, since otherwise there exist $0<\bar t<T$ and $\bar x>0$ such that $v(\bar t,\bar x)=\dot v(\bar t,\bar x)=Q(\bar t)=0$, in contradiction with the hypothesis. 
\end{proof}

%%%%%%%%%%%%%%%%%%%%%%%%%%%%%%%%%%%%%%%%%%%%%%%%%%%%%%%%%%%%%%%%%%
%%%%%%%%%%%%%%%%%%%%%%%%%%%%%%%%%%%%%%%%%%%%%%%%%%%%%%%%%%%%%%%%%%
\section{Example of Application}\label{Sect4}

In this section, we prove Theorem~\ref{theo:example}, i.e., that \eqref{eq:Abel} has at most two positive closed solutions
when $T=1$ and \[A(t) = t(t-t_A)\quad \text{and}\quad B(t) = (t-t_B)(t-1),\quad t_A,t_B\in\mathbb{R}.\]

In either of the following cases, the known methods allow it to be proved that
\eqref{eq:Abelquadquad} has at most one simple positive closed solution:

\begin{enumerate}
	\item\label{Pl} $t_A\not\in(0,1)$ or $t_{B}\not\in(0,1)$.
	\item\label{AGG} $t_A\in(0,1), t_{B}\in(0,1)$, and $t_A \in (0,t_{B})$. 
\end{enumerate}

In case \eqref{Pl}, either $A$ or $B$ has no zeros in $(0,1)$:
if $A$ has no zeros, it was proved in \cite{Pliss} that \eqref{eq:Abelquadquad} has at most one positive closed solution, 
while if $B$ has no zeros, the proof was given in \cite{GL}. In case \eqref{AGG},  it was proved in
\cite{AGG} that if for some $\alpha,\beta\in\mathbb{R}$ the function $\alpha A+\beta B$ does not
vanish identically and does not change sign in $(0,1)$ then the Abel equation has at most  one
positive closed solution. Hence, if we consider a linear combination of the form $\alpha A(t)+B(t)$,
its discriminant $d(\alpha)$ is a degree-two polynomial in $\alpha$ with leading coefficient $t_A^2$. Therefore, there exists
$\alpha$ such that $d(\alpha)\leq0$ (and so $\alpha A(t)+B(t)$ does not change  sign) if and only if its discriminant 
is greater than or equal to zero. But this discriminant is 
\[
\disc(d)=-(1-t_A)t_B(t_A-t_B),
\]
which is non-negative if and only if $t_A \in (0,t_B]$, and the result follows.
 
\medskip  
 
Hence, to prove  Theorem~\ref{theo:example} we may assume that  $0 < t_B < t_A < 1$.  
We shall divide the proof into two parts, first proving that \eqref{eq:Abel} satisfies the hypotheses of Theorem~\ref{theo:bif}, and 
then using Theorem~\ref{theo:bifrotated} to show 
that there are at most two positive closed solutions.

\subsection{Semistability of the singular solutions} As it is immediate to check that $(C_1)$ holds,  it only remains to verify that $(C_2)$ and $(C_3)$ hold to apply Theorem~\ref{theo:bif}.

Let us see that $(C_2)$ holds. By Proposition \ref{prop:upper}, it suffices to prove
that $P(t)=4(B (t)A '(t)-B '(t)A (t)) - B ^3(t)$ has at most one zero
in each of the intervals $J_1=(0,t_{B})$, $J_2=(t_{A},1)$. First we need the following lemma.

\begin{lema}\label{lema3}
The polynomial $A'(t)B(t)-A(t)B'(t)$ is negative for every $t$.
\end{lema}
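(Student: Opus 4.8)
The plan is to compute the polynomial $W(t):=A'(t)B(t)-A(t)B'(t)$ explicitly and exhibit it as a downward-opening parabola with no real roots. The first key observation is that, although $A'B$ and $AB'$ are each cubic in $t$, their leading terms coincide (both equal $2t^3$) and cancel in the difference, so $W$ is in fact a quadratic polynomial. Substituting $A(t)=t^2-t_At$ and $B(t)=t^2-(1+t_B)t+t_B$ and collecting terms, I expect to obtain
\[
W(t)=(t_A-t_B-1)\,t^2+2t_B\,t-t_At_B.
\]

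The second step is a sign analysis. The leading coefficient is $t_A-t_B-1$, which is strictly negative under the standing assumption $0<t_B<t_A<1$ (indeed $t_A<1$ and $t_B>0$), so $W$ opens downward. It then suffices to check that $W$ has no real zero, i.e. that its discriminant is negative. A direct computation gives
\[
\Delta=(2t_B)^2-4(t_A-t_B-1)(-t_At_B)=4t_B^2+4t_At_B(t_A-t_B-1),
\]
and factoring out $4t_B$ and simplifying the bracket as $t_A(t_A-1)-t_B(t_A-1)$ yields the clean factorization
\[
\Delta=4t_B\,(t_A-1)(t_A-t_B).
\]
Here each factor has a definite sign: $t_B>0$, $t_A-1<0$, and $t_A-t_B>0$, so $\Delta<0$. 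A downward parabola with negative discriminant is strictly negative on all of $\mathbb{R}$, which gives $W(t)=A'(t)B(t)-A(t)B'(t)<0$ for every $t$, as claimed.

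There is no deep obstacle here; the argument is entirely elementary once one notices the degree drop from cubic to quadratic. The only place that genuinely requires care is the factorization of $\Delta$ into $4t_B(t_A-1)(t_A-t_B)$, since a single sign slip there would invalidate the conclusion, so I would verify that computation independently. It is also worth remarking that the statement is asserted for \emph{all} real $t$ (not merely $t\in[0,1]$), which is exactly what the parabola-plus-discriminant argument delivers without any extra work, and which is convenient because $v(t,0)=A'(t)B(t)-A(t)B'(t)$ is the quantity needed in the hypothesis of Proposition~\ref{PropC3}.
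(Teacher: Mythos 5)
Your proof is correct and follows essentially the same route as the paper's: both identify $A'B-AB'$ as a quadratic with negative leading coefficient $t_A-t_B-1$ and negative discriminant, the paper recording the latter as $-(1-t_A)t_B(t_A-t_B)$, which agrees in sign with your $4t_B(t_A-1)(t_A-t_B)$ (they differ only by the positive factor $4$). You merely carry out explicitly the computations that the paper states without detail.
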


\begin{proof}
$A'(t)B(t)-A(t)B'(t)$ is a quadratic polynomial in $t$ with coefficients in $\mathbb{R}[t_A,t_B]$ whose discriminant and leading coefficient are $-(1-t_A)t_B(t_A-t_B) < 0$ and $t_A-t_B-1 < 0$, respectively.
\end{proof}

\begin{prop}\label{prop1}
The function $P(t)$ is negative in  $(0,t_B) \cup (t_A,1)$. In particular, condition $(C_2)$ holds.
\end{prop}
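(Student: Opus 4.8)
The plan is to write $P = 4W - B^3$, where $W(t) := B(t)A'(t) - B'(t)A(t)$ is precisely the quadratic controlled by Lemma~\ref{lema3}. Thus $W(t) < 0$ for every $t$, and so $4W < 0$ throughout. The sign of $P$ then hinges entirely on the competing term $-B^3$, whose sign is that of $-B(t)$ with $B(t) = (t-t_B)(t-1)$. On $(0,t_B)$ both factors of $B$ are negative, so $B > 0$, whence $-B^3 < 0$ and $P = 4W - B^3 < 0$ is immediate. So $P$ has no zero at all in $J_1 = (0,t_B)$, which is far stronger than what Proposition~\ref{prop:upper} requires there.

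The interval $J_2 = (t_A,1)$ is where the real work lies. Here $t - t_B > t_A - t_B > 0$ while $t - 1 < 0$, so $B < 0$, and now $-B^3 = |B|^3 > 0$ competes against $4W < 0$, so the sign of $P$ is no longer automatic. Writing $g(t) := -W(t) = B'(t)A(t) - B(t)A'(t) > 0$, the desired inequality $P < 0$ is exactly the polynomial comparison $\bigl[(t-t_B)(1-t)\bigr]^3 < 4\,g(t)$ on $(t_A,1)$.

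To control the right-hand side I would first locate the vertex of the upward-opening parabola $g$: it sits at $t^\ast = t_B/(1+t_B-t_A)$, and $t^\ast < t_A$ precisely because $t_B < t_A$. Hence $g$ is increasing on $(t_A,1)$ and bounded below by $g(t_A) = t_A(t_A-t_B)(1-t_A) > 0$. The two endpoint cases are reassuring and serve as base checks: at $t = 1$ one has $|B| = 0$ while $g(1) = (1-t_A)(1-t_B) > 0$, and at $t = t_A$ the comparison reduces to $B(t_A)^2 < 4 t_A$, which holds since $B(t_A)^2 = (t_A-t_B)^2(1-t_A)^2 < t_A^2 < 4 t_A$.

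The main obstacle is the interior of $J_2$, where both $|B|$ and $g$ vary and crude uniform bounds (for instance $|B| \le (1-t_B)^2/4$) are too lossy near $t_A$, exactly where $g(t_A)$ can be small. I would dispatch this by normalising the interval via $t = t_A + (1-t_A)s$ with $s \in (0,1)$, so that $4g(t) - |B(t)|^3$ becomes a degree-six polynomial in $s$ whose coefficients are polynomials in $t_A,t_B$; one then checks that this polynomial is positive on $(0,1)$ for all $0 < t_B < t_A < 1$, either by exhibiting definite signs for the relevant coefficients or, in the computational spirit of the paper, through a discriminant argument showing it has no root in $(0,1)$ combined with the endpoint values just computed. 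This yields $P < 0$ on all of $(0,t_B)\cup(t_A,1)$; in particular $P$ has at most one zero in each $J_i$, so Proposition~\ref{prop:upper} gives $(C_2)$.
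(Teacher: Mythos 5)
Your treatment of $J_1=(0,t_B)$ is correct and coincides with the paper's: there $B>0$, so $-B^3<0$, and Lemma~\ref{lema3} gives $4(BA'-B'A)<0$, hence $P<0$. You also correctly identify $J_2=(t_A,1)$ as the only place where something must actually be proved, and your preparatory observations there (the vertex of $g=-W$ lies to the left of $t_A$, so $g$ is increasing on $(t_A,1)$ with $g(t_A)=t_A(t_A-t_B)(1-t_A)$, and the endpoint comparisons at $t_A$ and $1$) are all accurate.

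The problem is that on $J_2$ the decisive step is never carried out. After conceding that the endpoint checks and the crude bound $|B|\le(1-t_B)^2/4$ do not suffice in the interior, you reduce the claim to the positivity of a degree-six polynomial in the normalised variable $s$ with coefficients in $\mathbb{R}[t_A,t_B]$, and then say one ``checks'' this ``either by exhibiting definite signs for the relevant coefficients or \dots through a discriminant argument.'' That is a plan, not a proof: neither the sign pattern of the coefficients nor the discriminant computation is exhibited, and it is precisely this two-parameter positivity statement that constitutes the content of the proposition on $(t_A,1)$. Monotonicity of $g$ plus positivity at the two endpoints does not rule out $|B|^3$ overtaking $4g$ somewhere inside the interval, so nothing you have actually established closes the argument. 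By contrast, the paper disposes of $(t_A,1)$ by rewriting $-P(t)$ directly as an explicit sum of factors each of which is visibly positive for $t\in(t_A,1)$ (namely a term in $t^3(t-t_A)^3$, a term $(1-t)(2t-t_A)(t-t_B)$, and a term $t(t-t_A)$), which makes the sign immediate once the algebraic identity is verified. To complete your route you would need to produce and verify the analogous explicit certificate of positivity for your degree-six polynomial; as written, the proof has a genuine gap exactly at its core.
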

 
\begin{proof}
By hypothesis, $B(t) > 0$ for all $t \in (0,t_B)$, and, by Lemma \ref{lema3}, we have that $A'(t)B(t)-A(t)B'(t) < 0$ for all $t$. Thus, \[P(t) = 4(A'(t)B(t)-A(t)B'(t)) - B^3(t) < 0\] for all $t \in (0,t_B)$. 

\medskip 
Let us assume that $t_A < t < 1$. Observe that 
\[P(t) = -(t^3(t-t_A)^3 + (1-t)(2t-t_A)(t-t_B) + t(t-t_A)).\] 
Since $t^3(t-t_A)^3 > 0, (1-t)(2t-t_A)(t-t_B) > 0$, and $t(t-t_A) > 0$ for all $t \in (t_A,1)$, we conclude that $P(t)$ is negative for all $t \in (t_A,1)$.

Finally, since $P(t)$ has no zeros in $(0,t_B) \cup (t_A, 1)$, condition $(C_2)$ holds by Proposition \ref{prop:upper}.
\end{proof}

Now let us prove that $(C_3)$ is fulfilled by using Proposition~\ref{PropC3} and Corollary~\ref{CorC3}.
Recall that $v(t,x)=B(t)(2A(t)x+B(t))^2+P(t)$.
By Lemma~\ref{lema3}, we have that $v(0,x)<0$ and $v(1,x)<0$ for all $x\geq 0$, and $v(t,0)<0$ for all $t\in[0,1]$. 
So it suffices to show that $v(\bar{t},x)=0$ has no positive solution for each zero $\bar{t}$ of $Q(t)$ in the interval $(0,1)$.

\medskip 

In our setting, the function $Q(t)$ in \eqref{eq:elim} is the following cubic polynomial in $t$ with coefficients in $\mathbb{R}[t_A,t_B]$:
\begin{align*}
Q(t) = & -4(1-t_A+t_B)\, t^3+ ((t_B^2+12t_B+1)-(1+t_B)t_A)\, t^2\\ & -2t_B (t_B+4t_A+1)\, t+ t_B(3t_A(t_B+1)-2t_B).
\end{align*}
We claim that $Q$ has exactly one zero in $(0,1)$. To prove this, we shall apply Sturm's theorem (\cite[Theorem 2.50]{ARAG}), but first we need to introduce some additional notation.

Let $\mathcal{S}$ be the following four-term sequence:
\[\mathcal{S}_0 = Q(t),\]
\[\mathcal{S}_1 = Q'(t),\]
\[\mathcal{S}_2 = -\operatorname{Rem}(\mathcal{S}_0,\mathcal{S}_1),\]
\[\mathcal{S}_3 = -\operatorname{Rem}(\mathcal{S}_1,\mathcal{S}_2),\]
where $\operatorname{Rem}(\mathcal{S}_i,\mathcal{S}_{i+1})$ is the remainder of dividing $\mathcal{S}_i$ by $\mathcal{S}_{i+1}$ as polynomials in $t$.
The sequence $\mathcal{S}$ is the so-called signed remainder sequence of $Q(t)$ and $Q'(t)$ (see \cite[Definition 1.7]{ARAG})).

\begin{lema}\label{lema2}
With the above notation, $\mathcal{S}_3 > 0$.
\end{lema}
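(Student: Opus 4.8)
The plan is to reduce the lemma to an explicit positivity statement in the parameters $t_A,t_B$. Since $\mathcal{S}_0=Q(t)$ is a cubic in $t$ and $\mathcal{S}_1=Q'(t)$ is quadratic, the first signed remainder $\mathcal{S}_2=-\operatorname{Rem}(\mathcal{S}_0,\mathcal{S}_1)$ is affine in $t$, so that $\mathcal{S}_3=-\operatorname{Rem}(\mathcal{S}_1,\mathcal{S}_2)$ has degree zero in $t$: it is a rational function $\mathcal{S}_3=N(t_A,t_B)/D(t_A,t_B)$ of the parameters alone. Hence the lemma is equivalent to showing that this function is strictly positive on the triangle $\mathcal{T}=\{(t_A,t_B): 0<t_B<t_A<1\}$, which is exactly the region to which the proof of Theorem~\ref{theo:example} has already been reduced.

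First I would carry out the two Euclidean divisions over the field $\mathbb{Q}(t_A,t_B)$, tracking the leading coefficients that enter the denominators. The leading coefficient of $Q$ is $-4(1-t_A+t_B)$, which is strictly negative on $\mathcal{T}$ since $t_A-t_B<t_A<1$ forces $1-t_A+t_B>0$; up to a factor of $3$ this is also the leading coefficient of $Q'$. These definite signs keep the divisions from degenerating on $\mathcal{T}$, and they let me read off the sign of $D$ from the accumulated leading coefficients. As part of this step I would check that the leading coefficient of $\mathcal{S}_2$ has no zero on $\mathcal{T}$, so that $\deg_t\mathcal{S}_2=1$ and $\mathcal{S}_3$ is genuinely a constant in $t$ (with no pole inside $\mathcal{T}$). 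It is convenient here to recall Lemma~\ref{lema3}: the quadratic $A'(t)B(t)-A(t)B'(t)$ is everywhere negative, with leading coefficient $t_A-t_B-1<0$ and discriminant $-(1-t_A)t_B(t_A-t_B)<0$; the same factors are likely to reappear in $N$ and $D$, and I would use them to simplify and to fix signs.

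The heart of the argument, and the step I expect to be the main obstacle, is certifying that the numerator $N$ is positive on $\mathcal{T}$: the divisions themselves are mechanical, but $N$ is a comparatively high-degree polynomial in two parameters, and positivity on a triangle is not automatic. My plan is to make the three constraints manifest through the barycentric substitution $t_B=p$, $t_A-t_B=q$, $1-t_A=r$ with $p,q,r>0$ and $p+q+r=1$, so that each of $t_B$, $t_A-t_B$, $1-t_A$ becomes one of the nonnegative coordinates. After homogenizing with $p+q+r=1$, the expectation is that $N$ rewrites as a form with nonnegative coefficients, i.e.\ a positive combination of monomials in $p,q,r$, which immediately gives $N>0$ on the open simplex. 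Should a few coefficients come out negative, I would instead group terms, complete squares, or absorb the offending factors via Lemma~\ref{lema3}, so as to exhibit $N$ as a sum of manifestly positive expressions. Combined with the sign of $D$ fixed in the previous step, this yields $\mathcal{S}_3>0$ on $\mathcal{T}$, as claimed.
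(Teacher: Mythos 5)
Your proposal follows essentially the same route as the paper: compute $\mathcal{S}_3$ explicitly as a rational function of $(t_A,t_B)$ alone (the paper finds it equals a manifestly positive prefactor times $f(t_A,t_B)/g^2(t_A,t_B)$), and then certify positivity of the numerator on the triangle $0<t_B<t_A<1$ by expressing it through the quantities $t_B$, $t_A-t_B$, $1-t_A$ with nonnegative coefficients — exactly the paper's final step $f=(3t_B(t_A-t_B)+506(1-t_A))t_B(t_A-t_B)+3(1-t_A)^2>0$. Your ``barycentric'' rewriting is precisely this decomposition, and the computation does confirm your expectation, so the plan is sound and matches the published argument.
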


\begin{proof}
A direct computation shows that $\mathcal{S}_3$ is equal to \[36(1-t_A)(1-t_B)^2 t_B(t_A-t_B)(1-t_A+t_B)\, \frac{f(t_A,t_B)}{g^2(t_A,t_B)}\]
 where
 \begin{align*}
  f(t_A ,t_B) 
  =\ & 3t_B^4-6t_At_B^3+3t_A^2t_B^2+506t_At_B^2-506t_B^2 -\\ & -506t_A^2t_B+506t_At_B+3t_A^2-6t_A+3
 \end{align*} 
  and
  \begin{align*}
   g(t_A ,t_B) =\ & t_B^4 - 2t_At_B^3 + t_A^2t_B^2 - 98t_At_B^2 + 98t_B^2 + \\ & + 98t_A^2t_B  - 98t_At_B + t_A^2 - 2t_A+1 .
   \end{align*}
    Since $0 < t_B < t_A < 1$, we have that the sign of $\mathcal{S}_3$ is the same as the sign of $f(t_A,t_B)$. Now, it suffices to observe that
     \begin{align*}
     	f(t_A,t_B) =\ & (3t_B(t_A-t_B)+506(1-t_A))t_B(t_A-t_B)+3(1-t_A)^2 > 0
     \end{align*}
     	 to get the desired result.
\end{proof}

\begin{prop}\label{prop2}
The cubic polynomial $Q(t)$ has exactly one zero in $(0,1)$. Moreover, this zero lies in $(t_B,1)$.
\end{prop}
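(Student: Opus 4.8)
The plan is to apply Sturm's theorem (\cite[Theorem 2.50]{ARAG}) to the signed remainder sequence $\mathcal{S}=(\mathcal{S}_0,\mathcal{S}_1,\mathcal{S}_2,\mathcal{S}_3)$ already introduced, counting the zeros of $Q$ in $(0,1)$ as $V(0)-V(1)$, where $V(c)$ is the number of sign variations in $(\mathcal{S}_0(c),\mathcal{S}_1(c),\mathcal{S}_2(c),\mathcal{S}_3(c))$. Lemma~\ref{lema2} already tells us that $\mathcal{S}_3$ is a positive constant; in particular the sequence terminates in a nonzero constant, so $\gcd(Q,Q')$ is constant, $Q$ is squarefree, and the hypotheses of Sturm's theorem are met. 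Since $Q(0)\neq 0$ and $Q(1)\neq 0$ (as we will check), the evaluation at the endpoints is legitimate.

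First I would record the signs of the two leading terms of the sequence at $t=0$ and $t=1$. Direct evaluation gives $Q(0)=t_B\bigl(3t_A(t_B+1)-2t_B\bigr)$, which is positive because $3t_A-2t_B>t_B>0$, while $Q(1)$ factors as $3(1-t_B)^2(t_A-1)<0$. Similarly $Q'(0)=-2t_B(t_B+4t_A+1)<0$ and $Q'(1)=10(1-t_B)(t_A-1)<0$. Hence the sign pattern of $\mathcal{S}$ is $(+,-,?,+)$ at $t=0$ and $(-,-,?,+)$ at $t=1$, where $?$ stands for the sign of the linear polynomial $\mathcal{S}_2$ at the respective endpoint.

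The key observation, which lets me avoid computing $\mathcal{S}_2$ altogether, is that its sign at the endpoints is irrelevant to the count. Indeed, a tail of the form $(-,?,+)$ contributes exactly one sign variation whether $?$ is $+$, $-$, or $0$. Therefore $V(0)=2$ (one variation from $+$ to $-$, plus one in the tail) and $V(1)=1$ (no variation between the two leading $-$ signs, plus one in the tail), so $V(0)-V(1)=1$ and $Q$ has exactly one zero in $(0,1)$. To locate it, I would evaluate $Q$ at $t_B$: factoring the resulting bracket yields $Q(t_B)=3\,t_B(t_A-t_B)(1-t_B)^2$, which is strictly positive since $0<t_B<t_A<1$. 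As $Q(t_B)>0$ and $Q(1)<0$, the unique zero lies in $(t_B,1)$.

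I do not expect a genuine obstacle here: the argument is driven by the endpoint sign computations together with Lemma~\ref{lema2}, and the conceptual point is simply that the sign of $\mathcal{S}_2$ cancels out of both variation counts. The only care required is computational—verifying the factorizations $Q(1)=3(1-t_B)^2(t_A-1)$, $Q'(1)=10(1-t_B)(t_A-1)$, and especially $Q(t_B)=3\,t_B(t_A-t_B)(1-t_B)^2$, the last of which amounts to recognizing the cubic bracket $-3t_B^3+6t_B^2+3t_At_B^2-3t_B-6t_At_B+3t_A$ as $3(t_A-t_B)(1-t_B)^2$. Every sign then follows from the standing assumption $0<t_B<t_A<1$, with no delicate estimate needed.
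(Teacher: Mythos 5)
Your proof is correct, and it shares the paper's central device --- Sturm's theorem applied to the signed remainder sequence $\mathcal{S}$, together with the observation that the sign of $\mathcal{S}_2$ at the endpoints is immaterial because a tail of the form $(-,?,+)$ always contributes exactly one variation --- but it organizes the argument differently. The paper applies Sturm only on $(t_B,1)$, obtaining one root there, and then runs a separate Budan--Fourier count on $(0,t_B]$ (comparing $\operatorname{Var}(\operatorname{Der}(Q),0)=3$ with $\operatorname{Var}(\operatorname{Der}(Q),t_B)=3$) to exclude roots in that subinterval. You instead apply Sturm once on the whole interval $(0,1)$, using the endpoint signs $Q(0)>0$, $Q'(0)<0$ and $Q(1)<0$, $Q'(1)<0$ to get $V(0)-V(1)=2-1=1$, and then locate the unique root in $(t_B,1)$ by the elementary sign change $Q(t_B)=3t_B(t_A-t_B)(1-t_B)^2>0$ versus $Q(1)=-3(1-t_A)(1-t_B)^2<0$. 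Your factorizations of $Q(0)$, $Q(1)$, $Q'(1)$, and $Q(t_B)$ all agree with the paper's, and your remark that $\mathcal{S}_3$ being a nonzero constant makes $Q$ squarefree (so the Sturm count is unambiguous) is a sound, if optional, precaution. What your route buys is the complete elimination of the Budan--Fourier step and its four derivative-sign evaluations, replaced by a single intermediate-value observation; what the paper's route buys is that the Sturm computation is confined to the subinterval where the root actually lives, which is arguably more self-documenting but strictly more work. Either way the conclusion and all sign verifications rest only on $0<t_B<t_A<1$.
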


\begin{proof}
Let us see that $Q(t)$ has exactly one root in $(t_B,1)$.
On the one hand, the number of sign variations of $\mathcal{S}$ at $t_B$, $\operatorname{Var}(\mathcal{S},t_B)$, is $2$. Indeed, when evaluating $\mathcal{S}$ at $t_B$ one has 
\[\mathcal{S}_0 = Q(t_B) = 3(1-t_B)^2t_B(t_A-t_B) > 0,\]
\[\mathcal{S}_1 = Q'(t_B) = -10(1-t_B)t_B(t_A-t_B) < 0,\]
\[\mathcal{S}_3 > 0,\]
where the last inequality follows by Lemma \ref{lema2}. Notice that
the number of sign variations at $t_B$ is equal to two regardless of
the sign of $\mathcal{S}_2$ at $t_B$. On the other hand, the number of
sign variations of $\mathcal{S}$ at $1$,
$\operatorname{Var}(\mathcal{S},1)$, is $1$. Indeed, when evaluating
$\mathcal{S}$ at $1$ one has
\[\mathcal{S}_0 = Q(1) = -3(1-t_A)(1-t_B)^2 < 0,\]
\[\mathcal{S}_1 = Q'(1) = -10(1-t_A)(1-t_B) < 0,\]
\[\mathcal{S}_3 > 0,\]
where the last inequality follows by Lemma \ref{lema2}. Again notice
that the number of sign variations at $1$ is equal to one regardless of the sign
of $\mathcal{S}_2$ at $1$.

Now, by Sturm's theorem (\cite[Theorem 2.50]{ARAG}) we conclude that the number of real roots of $Q(t)$ in $(t_B,1)$ is equal to 
\[\operatorname{Var}(\mathcal{S},t_B) - \operatorname{Var}(\mathcal{S},1) = 1.\]%, and we are done.

\medskip 

Next we prove that $Q(t)$ has no roots in $(0,t_B]$ by using the
Budan-Fourier theorem (\cite[Theorem 2.35]{ARAG}).  Let
$\operatorname{Der}(Q)$ be the list $Q(t), Q'(t), Q''(t),$
$Q'''(t)$. Let us compute the number of sign variations of
$\operatorname{Der}(Q)$ at the borders of the intervals.
\begin{itemize}
\item $\operatorname{Var}(\operatorname{Der}(Q),0)=3$. Indeed, since $0 < t_B < t_A < 1$, we have that 
\[Q(0)=t_B(t_A(3t_B+1)+2(t_A-t_B)) > 0,\]
\[Q'(0) = -2t_B(4t_A+t_B+1) < 0,\]
\[
Q''(0) = (2t_B+2)(1-t_A)+2t_B(t_B+11) > 0,
\]
\[Q'''(0) = -24(1+t_B-t_A) < 0.\]
\item $\operatorname{Var}(\operatorname{Der}(Q),t_B)=3$. Indeed, since $0 < t_B < t_A < 1$ and $t_B<1/2$, we have that
\[Q(t_B) > 0,\]
\[Q'(t_B) < 0,\]
\[
Q''(t_B)  = 22t_B(t_A-t_B)+2(1-t_A) > 0
\]
\[Q'''(t_B) = Q'''(0) < 0.\]
\end{itemize}
Therefore, by the Budan-Fourier theorem, we obtain that the number of
roots of $Q$ in $(0,t_B]$ is less than or equal
to \[\operatorname{Var}(\operatorname{Der}(Q),0) -
\operatorname{Var}(\operatorname{Der}(Q),t_B) = 0,\] i.e.,
$Q(t)$ has no roots in $(0,t_B]$, so we conclude.
\end{proof}

\begin{prop}\label{prop3}
Let $\bar t \in (t_B,1)$ be the unique real root of $Q(t)$ in $(0,1)$.
Then  \[v(\bar t, x) = 0\] has no positive solutions. In particular, $(C_3)$ holds.
\end{prop}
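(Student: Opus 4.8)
The plan is to rewrite $v(\bar t,x)$ as a quadratic polynomial in $x$ whose coefficient signs can be read off, and then argue by cases on the sign of $A(\bar t)$. First I would use the identity $B^3+P=4(A'B-AB')$, which is immediate from the definition \eqref{eq:upper} of $P$, to expand
\begin{equation*}
v(t,x)=B\,(2Ax+B)^2+P=4\bigl(A^2B\,x^2+AB^2\,x+(A'B-AB')\bigr).
\end{equation*}
Since $\bar t\in(t_B,1)$ (Proposition~\ref{prop2}) we have $B(\bar t)=(\bar t-t_B)(\bar t-1)<0$, and by Lemma~\ref{lema3} the constant term $A'B-AB'$ is negative at $\bar t$ (indeed everywhere). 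Thus only the sign of $A(\bar t)$ is undetermined, and it governs the two top coefficients $A^2B$ and $AB^2$, so I would split the argument according to $A(\bar t)\le 0$ or $A(\bar t)>0$.

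If $A(\bar t)\le 0$, that is $\bar t\le t_A$, then $A^2B\le 0$ and $AB^2\le 0$ at $\bar t$, so all three coefficients of the quadratic are $\le 0$ while the constant term is strictly negative; hence $v(\bar t,x)<0$ for every $x\ge 0$ and there is no positive zero. (When $A(\bar t)=0$ this is trivial, since then $v(\bar t,x)=4(A'B-AB')<0$.)

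If $A(\bar t)>0$, that is $\bar t\in(t_A,1)$, the parabola opens downward and an elementary sign count no longer suffices, so I would instead control its discriminant in $x$. A short computation, again using $B^3-4(A'B-AB')=-P$, gives
\begin{equation*}
\disc_x\bigl(A^2B\,x^2+AB^2\,x+(A'B-AB')\bigr)=-A^2B\,P.
\end{equation*}
Because $\bar t\in(t_A,1)$, Proposition~\ref{prop1} yields $P(\bar t)<0$, and combined with $A(\bar t)^2>0$ and $B(\bar t)<0$ this makes the discriminant strictly negative. Hence the quadratic has no real root; as its value at $x=0$ equals $4(A'B-AB')<0$, it is negative for all $x$, and once more $v(\bar t,x)=0$ has no positive solution.

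Having shown that the unique zero $\bar t$ of $Q$ in $(0,1)$ yields no positive solution of $v(\bar t,x)=0$, Corollary~\ref{CorC3} gives $v^{-1}(0)\cap\dot v^{-1}(0)=\emptyset$; together with the boundary signs $v(t,0)<0$, $v(0,x)<0$, $v(1,x)<0$ recorded via Lemma~\ref{lema3}, Proposition~\ref{PropC3} then delivers $(C_3)$. I expect the genuine obstacle to be the case $A(\bar t)>0$: there the downward-opening parabola with negative constant term does not exclude two positive roots by inspection, and one must use the factorization $\disc_x v=-A^2B\,P$ together with the sign of $P$ on $(t_A,1)$ from Proposition~\ref{prop1}. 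That the discriminant in $x$ reproduces $-P$ is precisely what closes the argument, and verifying this identity is the one routine but essential computation.
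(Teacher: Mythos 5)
Your proposal is correct and follows essentially the same route as the paper: expand $v(\bar t,x)$ into the quadratic $4\bigl(A^2B\,x^2+AB^2\,x+(A'B-AB')\bigr)$, handle $\bar t\le t_A$ by the sign of the coefficients (using Lemma~\ref{lema3}) and $\bar t\in(t_A,1)$ by the discriminant identity $\disc_x=-A^2BP<0$ via Proposition~\ref{prop1}, then invoke Corollary~\ref{CorC3} and Proposition~\ref{PropC3}. The only cosmetic difference is that you merge the paper's separate cases $\bar t\in(t_B,t_A)$ and $\bar t=t_A$ into the single case $A(\bar t)\le 0$.
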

\begin{proof}
First we note that 
\[
v(\bar t, x) = 4 A^2(\bar t)B(\bar t)x^2+4 A(\bar t)B^2(\bar t)x+4 (A'(\bar t)B(\bar t)-A(\bar t)B'(\bar t)).
\]
We distinguish three cases:
\begin{enumerate}
\item If $\bar t \in (t_B,t_A)$ then $A^2(\bar t) B(\bar t) < 0$, $A(\bar t) B^2(\bar t) < 0$, and $A'(\bar t) B(\bar t)-B'(\bar t) A(\bar t) < 0$  by Lemma \ref{lema3}. So all the coefficients in $x$ of $v(\bar t,x)$ are negative, and we conclude that $v(\bar t, x)$ has no positive roots.
\item If $\bar t = t_A$ then $v(\bar t, x) = A'(t_A)B(t_A)\neq0$.
\item If $\bar t \in (t_A,1)$ then $P(\bar t) < 0$  by Proposition \ref{prop1}. Now we only need to observe that the discriminant of $v(\bar t,x)$ is equal to $- A^2(\bar t) B(\bar t) P(\bar t) < 0$ to conclude that $v(\bar t, x)$ has no real roots.
\end{enumerate}

By Lemma~\ref{lema3}, $A'(t)B(t)-A(t)'B(t)<0$ for all $t\in(0,1)$. Also, $v(0,x) = -t_A t_B <0$, $v(1,x) = -(1-t_A)(1-t_B)<0$, and $v^{-1}(0) \cap \dot v^{-1}(0)=\emptyset$ by Corollary \ref{CorC3}. So, by Proposition \ref{PropC3}, we are done.
\end{proof}

Since $(C_1)$, $(C_2)$, $(C_3)$ hold, Theorem~\ref{theo:bif} implies 
that if $u(t,\tilde x)$ is a singular solution of \eqref{eq:Abel} then $u_{xx}(T,\tilde x)<0$.

\subsection{Number of limit cycles}
Developing $u(t,x)$ in power series with respect to $x$ (see, e.g., \cite{ABF} or \cite{BFG}),
\[
\begin{split}
u(t,x) = & x + \left(\int_0^1 B(t)\,dt\right) x^2 + \left(\int_0^1 A(t)\,dt\right) x^3 
\\ & + \left(\int_0^1 A(t)\int_0^t B(s)\,ds\,dt\right) x^4 + \mathcal{O}\\
       = & x + \frac{3 t_B-1}{6} x^2 + \frac{2 - 3 t_A}{6} x^3 + \frac{ -16 + 21 t_A + 54 t_B - 75 t_A t_B}{360} x^4 + \mathcal{O},
\end{split}
\]
where $\mathcal{O}$ denotes higher order terms in $x$, $t_B-1/3$, and $t_A-2/3$.

In particular, if $t_A=2/3$ and $t_B=1/3$ then $u(t,x)-x=-x^4/540+\mathcal{O}(x^5)$, while 
the signs of the coefficients of $x^2$ and $x^3$ depend on $t_B$ and $t_A$ respectively. Hence, there is a double Hopf bifurcation of the origin giving rise to two positive closed solutions for $t_A<2/3$ and $t_B<1/3$.

To prove that the maximum number of positive closed solutions is two, 
we shall apply Theorem~\ref{theo:bifrotated}.  To this end, let us think of $-t_A$ as parameter $\lambda \in (\lambda_1, \lambda_2)$, with $\lambda_1 = -1$ and $\lambda_2 = -t_B$, so that
\[F(t,x,\lambda)=t(t+\lambda)x^3+(t-t_B)(t-1)x^2,\] and
$F_\lambda(t,x,\lambda) = t x^3>0,$ for all $t\in(0,1)$ and $x>0$.

For $\lambda=\lambda_2$, as was mentioned at the beginning of the section,
 \eqref{eq:Abelquadquad} has at most one positive closed solution.
Since for $t_B\neq 1/3$ the stability of the origin does not change, Theorem~\ref{theo:bifrotated} implies that 
\eqref{eq:Abelquadquad} has at most two positive closed solutions for every $t_B\neq 1/3$ and $t_B<t_A<1$. 

To conclude, note that $F_\lambda(t,x,\lambda)$ is  monotonic with respect to $-t_B$, 
and that singular positive closed solutions have the semistability given by Theorem~\ref{theo:bif},
so that if equation \eqref{eq:Abelquadquad} has more than two positive closed solutions for $t_B=1/3$ and some $t_B<t_A<1$ then
a small perturbation of $t_B$ would keep or increase that number of 
positive closed solutions, in contradiction with the maximum of two positive closed solutions 
for $t_B\neq 1/3$.

%%%%%%%%%%%%%%%%%%%%%%%%%%%%%%%%%%%%%%%%%%%%%%%%%%%%%%%%%%%%%%%%%%
%%%%%%%%%%%%%%%%%%%%%%%%%%%%%%%%%%%%%%%%%%%%%%%%%%%%%%%%%%%%%%%%%%
\section{Linear Trigonometric Coefficients}\label{Sect5}
Consider the Abel equation \eqref{eq:Abel_trigonometric_linear}, i.e.,
$x'=A(t)x^3+B(t)x^2$, where
\[
A(t)=a_0+a_1\sin t+a_2\cos t\quad \text{and}\quad B(t)=b_0+b_1\sin t+b_2\cos t,
\]
with $a_i,b_i\in\mathbb{R},\ i=0,1,2$. We prove that
Theorem~\ref{theo:bif} holds in a region where two positive limit cycles bifurcate from the origin,
obtaining an upper bound of two positive limit cycles.

\medskip

Equation \eqref{eq:Abel_trigonometric_linear} has at most one simple positive limit cycle when $A$ or $B$ 
have definite sign~\cite{Pliss,GL},
when there is a linear combination of $A,B$ having definite sign~\cite{AGG}, 
or when the coefficients $a_i,b_i$, $i=0,1,2$, belong to certain regions~\cite[Theorem 1.2]{BFG}, in particular 
when $a_0 b_0=0$.
Note that the first condition corresponds to $A$ or $B$ having at most one zero in $[0,2\pi)$. Let us check that
the second condition holds either whenever
$A$ or $B$ have at most one zero in $[0,2\pi)$ or when there is no zero of $A$ (resp.~$B$) 
between the two zeros of $B$ (resp.~$A$). Therefore, we may assume that   $A,B$ have exactly two simple zeros in $[0,2\pi)$ 
which are interleaved.

\begin{prop}
Assume that $A$ or $B$ have at most one zero in $[0,2\pi)$, or that there is no zero of $A$ (resp.~$B$) 
between the two zeros of $B$ (resp.~$A$). Then there exist $\alpha,\beta\in\mathbb{R}$ 
such that $\alpha A(t)+\beta B(t)\geq 0$ for all $t\in\mathbb{R}$.
\end{prop}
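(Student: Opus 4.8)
The plan is to reduce the existence of a nonnegative combination to an elementary fact about first harmonics and then to a configuration of two chords of the unit circle. First, note that for any $\alpha,\beta$ the combination $\alpha A(t)+\beta B(t)=(\alpha a_0+\beta b_0)+(\alpha a_1+\beta b_1)\sin t+(\alpha a_2+\beta b_2)\cos t$ is again a first harmonic $c_0+c_1\sin t+c_2\cos t$. Writing $c_1\sin t+c_2\cos t=\sqrt{c_1^2+c_2^2}\,\sin(t+\varphi)$, such a function keeps a constant sign on $\mathbb{R}$ if and only if $c_0^2\ge c_1^2+c_2^2$; and when it does, it is either $\ge 0$ or $\le 0$ everywhere, so after replacing $(\alpha,\beta)$ by $-(\alpha,\beta)$ if necessary it is $\ge 0$. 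Thus it suffices to produce $(\alpha,\beta)\ne(0,0)$ with $(\alpha a_0+\beta b_0)^2\ge(\alpha a_1+\beta b_1)^2+(\alpha a_2+\beta b_2)^2$.

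Second, I would pass to a planar picture on the unit circle $\{(X,Y):X^2+Y^2=1\}$, identifying $X=\cos t$ and $Y=\sin t$. Then $A$ and $B$ are the restrictions to the circle of the affine functions $\tilde A(X,Y)=a_0+a_2X+a_1Y$ and $\tilde B(X,Y)=b_0+b_2X+b_1Y$, whose zero sets are lines $\ell_A,\ell_B$. A first harmonic has constant sign on the circle exactly when its zero line lies at distance $\ge 1$ from the origin, i.e.\ when the circle is contained in one of the closed half-planes the line bounds. In particular, $A$ (resp.\ $B$) has at most one zero in $[0,2\pi)$ precisely when $\ell_A$ (resp.\ $\ell_B$) is at distance $\ge 1$, that is $a_0^2\ge a_1^2+a_2^2$ (resp.\ $b_0^2\ge b_1^2+b_2^2$); in that case $(\alpha,\beta)=(1,0)$ (resp.\ $(0,1)$) already works and there is nothing more to prove.

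Third, in the remaining case both $\ell_A$ and $\ell_B$ are secant lines, meeting the circle in the two pairs of points given by the zeros of $A$ and of $B$, and so determining two chords. The hypothesis that no zero of $A$ lies between the two zeros of $B$ means exactly that the endpoints of these two chords do not interleave on the circle, equivalently that the chords do not meet inside the open disk. The combinations $\ell_{\alpha,\beta}=\{\alpha\tilde A+\beta\tilde B=0\}$ are precisely the lines of the pencil through $p=\ell_A\cap\ell_B$ (or, if $\ell_A\parallel\ell_B$, all lines sharing their common direction). The key geometric step is to show that non-interleaving forces $p$ to lie outside the open unit disk (or the chords to be parallel): any line through an interior point is a secant whose two intersections with the circle lie strictly on opposite sides of that point, so if $p$ were interior the two chords would cross there, contradicting non-interleaving. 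Once $p$ lies on or outside the circle, a tangent line from $p$ (or, in the parallel case, any line of the pencil slid far enough out) is at distance $\ge 1$ from the origin; being a member of the pencil, it furnishes the desired $(\alpha,\beta)$.

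The main obstacle is the geometric equivalence in the last paragraph, namely translating the combinatorial hypothesis that the zeros do not interleave into the statement that the pencil $\{\alpha\tilde A+\beta\tilde B\}$ contains a line disjoint from the open disk; everything else is routine, although one must keep track of the degenerate configurations (parallel chords, and a point $p$ lying exactly on the circle, which yields a tangent line and hence a combination with a double zero but constant sign). If a purely algebraic route is preferred, the same conclusion follows by showing that the binary quadratic form $(\alpha a_0+\beta b_0)^2-(\alpha a_1+\beta b_1)^2-(\alpha a_2+\beta b_2)^2$ is not negative definite: its discriminant is the Gram determinant of $(a_0,a_1,a_2)$ and $(b_0,b_1,b_2)$ for the form $u_0^2-u_1^2-u_2^2$, which the non-interleaving hypothesis renders nonpositive, so the form represents a nonnegative value at some $(\alpha,\beta)\ne(0,0)$.
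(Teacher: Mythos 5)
Your argument is correct, and it takes a genuinely different route from the paper's. You work directly on the unit circle: you reduce constancy of sign of a first harmonic to the condition $c_0^2\ge c_1^2+c_2^2$, i.e.\ to the zero line of the associated affine function lying at distance $\ge 1$ from the origin, and then observe that the combinations $\alpha\tilde A+\beta\tilde B$ form the pencil of lines through $p=\ell_A\cap\ell_B$, so it suffices to show that the non‑interleaving hypothesis pushes $p$ out of the open disk (or makes the chords parallel), after which a tangent line from $p$ does the job. The only nontrivial ingredient is the standard fact that two chords meet in the open disk exactly when their endpoints interleave, and your degenerate cases (parallel or coincident lines, $p$ on the circle) are handled correctly. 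The paper instead applies the tangent half‑angle substitution $z=\tan(t/2)$, turning $A,B$ into quadratics $\bar A,\bar B$ in $z$ with the same root configuration, and then computes: $\alpha\bar A+\bar B$ has definite sign iff its discriminant $d(\alpha)\le 0$ for some $\alpha$, which happens iff the discriminant of $d$ is $\ge 0$; that quantity factors as $16a^2b^2(z_{A_1}-z_{B_1})(z_{A_2}-z_{B_1})(z_{A_1}-z_{B_2})(z_{A_2}-z_{B_2})$, which is nonnegative precisely when the roots do not interleave. The two proofs are two faces of the same phenomenon (your concluding Gram‑determinant remark is essentially the paper's double‑discriminant computation in disguise), but yours is synthetic and coordinate‑free, which makes the role of the interleaving hypothesis geometrically transparent, while the paper's is a short explicit calculation that produces the sign of the relevant invariant with no case analysis about the position of $p$.
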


\begin{proof}
If $A$ has at most one zero in $[0,2\pi)$ then $A(t)$ has definite sign in $[0,2\pi)$, 
so the result follows by choosing $\alpha=\pm 1$ and $\beta =0$. The same argument applies if $B$ has at most one zero in $[0,2\pi)$.

Assume now that $A,B$ have two zeros in $(0,2\pi)$, and that there is no zero of $B$ between the zeros of $A$
(with the other case being analogous). The change of variables $z\to \tan(t/2)$ transforms $A,B$
into rational functions with denominator $1+z^2$ and numerator a second degree polynomial. Moreover, the 
relative position of the roots of $A,B$ are preserved.
So we may assume that, after the change of variables, $A$ and $B$ become 
$\bar A(z)=a(z-z_{A_1})(z-z_{A_2})$ and $\bar B(z)=b(z-z_{B_1})(z-z_{B_2})$, respectively. Then  $\alpha \bar A+\bar B$ has definite sign if and only if  $d(\alpha):=\disc(\alpha \bar A+\bar B)\leq0$. Since $d(\alpha)$ is a degree two polynomial in $\alpha$ with positive leading coefficient, there exists $\alpha$ such that $d(\alpha)\leq0$ if and only if $\disc(d(\alpha))\geq0$. From
$$
\disc(d(\alpha))=16 a^2 b^2 (z_{A_1} - z_{B_1}) (z_{A_2} - z_{B_1}) (z_{A_1} - z_{B_2}) (z_{A_2} - z_{B_2}),
$$
we conclude. 
\end{proof}

From Remark \ref{rem:signos}, we  may assume that $A(0)=0$, $A'(0) <0 $, and $B(0) > 0$ since the remaining cases are similarly studied. Moreover, rescaling $x$, it is not restrictive to assume $A'(0)=-1$. Hence, in what follows we shall consider the equation 
\begin{equation}\label{eq:Abellinear2}
x'=\left(a_0-\sin t-a_0\cos t\right)x^3 + \left(b_0+b_1\sin t+b_2\cos t  \right)x^2,
\end{equation}
where $b_0+b_2>0$. 
Developing the solution of \eqref{eq:Abellinear2} in power series, we obtain
\[\label{expandsolution}
\begin{split}
u(t,x)=&x+2 b_0 \pi x^2 
+ (2 a_0 \pi + 4 b_0^2 \pi^2) x^3 \\
&+ \pi (3 a_0 b_1 - b_2 + 8 b_0^3 \pi^2 + 2 b_0 (1 + 5 a_0 \pi)) x^4+\mathcal{O}(x^5).
\end{split}
\]
In particular, there is a change of stability when $b_0=0$ or $a_0=0$, 
whereas when $a_0=b_0=0$ we have that $u(t,x)<x$ for $x>0$ close to the origin, which implies
that at least two limit cycles bifurcate from the origin with $a_0>0$ and $b_0<0$.

Actually, when $a_0=b_0=0$, $u(2\pi,x)<x$ for $x>0$ whenever $u(2\pi,x)$ is defined, as the following result establishes.

\begin{prop}{\cite[Theorem~2.4]{BT}}\label{prop:nosolutions}
For $a_0=b_0=0$, the Abel equation~\eqref{eq:Abellinear2}
has no positive limit cycles. Moreover, $u(2\pi,x)<x$
for any $x>0$ such that $u(t,x)$ is defined for $t\in[0,2\pi]$. 
\end{prop}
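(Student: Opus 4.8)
The plan is to reduce the inequality $u(2\pi,x)<x$ to a single integral condition and then exploit the sign of $b_2$. Since here $A(t)=-\sin t$, for any positive solution $u(t)=u(t,x)$ one has
\[
\frac{d}{dt}\left(\frac{1}{u}\right)=-\frac{u'}{u^2}=-A\,u-B=\sin t\,u-B.
\]
Integrating over $[0,2\pi]$ and using $\int_0^{2\pi}B(t)\,dt=0$ (because $\int_0^{2\pi}\sin t\,dt=\int_0^{2\pi}\cos t\,dt=0$), I would obtain
\[
\frac{1}{u(2\pi,x)}-\frac{1}{x}=\int_0^{2\pi}\sin t\,u(t,x)\,dt=:I(x).
\]
Hence $u(2\pi,x)<x$ \emph{if and only if} $I(x)>0$, and the whole statement becomes the assertion that $I(x)>0$ for every admissible $x>0$. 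Running the same computation for $1/(2u^2)$ gives $\big(2u(2\pi,x)^2\big)^{-1}-\big(2x^2\big)^{-1}=-\int_0^{2\pi}B/u\,dt$, so the claim is equivalent to $\int_0^{2\pi}B(t)/u(t)\,dt<0$. This is at least consistent with the local picture, since the power series gives $I(x)=b_2\pi\,x^2+\mathcal{O}(x^3)>0$, using that $b_0+b_2>0$ and $b_0=0$ force $b_2>0$.

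To analyse $\int_0^{2\pi}B/u\,dt$ I would extract first–harmonic identities. As $\sin t$ vanishes at $0$ and $2\pi$, integrating $\frac{d}{dt}(\sin t/u)=\cos t/u+\sin^2 t\,u-B\sin t$ leaves no boundary term and yields $\int_0^{2\pi}\cos t/u\,dt=b_1\pi-\int_0^{2\pi}\sin^2 t\,u\,dt$, while integrating $\frac{d}{dt}(\cos t/u)=-\sin t/u+\sin t\cos t\,u-B\cos t$ produces the boundary term $I(x)$ and gives $\int_0^{2\pi}\sin t/u\,dt=\int_0^{2\pi}\sin t\cos t\,u\,dt-b_2\pi-I(x)$. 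Combining, the constant contributions cancel and
\[
\int_0^{2\pi}\frac{B}{u}\,dt=-b_2\int_0^{2\pi}\sin^2 t\,u\,dt+b_1\int_0^{2\pi}\sin t\,(\cos t-1)\,u\,dt.
\]
When $b_1=0$ the second term is absent and, since $b_2>0$ and $u>0$, the right-hand side is strictly negative; thus $u(2\pi,x)<x$ and Proposition~\ref{prop:nosolutions} follows immediately in this case.

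For general $b_1$ the remaining term $b_1\int_0^{2\pi}\sin t\,(\cos t-1)\,u\,dt$ has indefinite sign, and controlling it is the main obstacle. Here I would use the reflection $t\mapsto 2\pi-t$. Setting $v(t)=u(2\pi-t,x)$ and $D(t)=u(t,x)-v(t)$, the relation $\sin(2\pi-t)=-\sin t$ turns $I(x)$ into $\int_0^{\pi}\sin t\,D(t)\,dt$, while the symmetry $A(2\pi-t)=-A(t)$ makes $v$ solve $v'=A\,v^3+(b_1\sin t-b_2\cos t)v^2$. Subtracting, $D$ satisfies the linear equation $D'=a(t)\,D+2b_2\cos t\,v^2$ with $D(\pi)=0$, where $a(t)=A(u^2+uv+v^2)+B(u+v)$; crucially the forcing depends only on $b_2>0$, the $b_1$-dependence surviving merely through $a(t)$ and $v$. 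Solving backward from $t=\pi$ gives $D(t)=-2b_2\int_t^{\pi}e^{-\int_t^s a}\cos s\,v(s)^2\,ds$, which is manifestly positive on $[\pi/2,\pi)$. The crux is then to prove $D(0)>0$, i.e.\ that the positive weight $W(s)=e^{-\int_0^s a}v(s)^2$ satisfies $\int_0^{\pi}W(s)\cos s\,ds<0$ (equivalently, that $W$ is biased toward $(\pi/2,\pi)$). I expect this to be the hardest step, since it is precisely where the nonlinear structure of the flow and the sign $b_2>0$ must be used; an alternative formulation of the same difficulty would be to bound the indefinite moment $\int_0^{2\pi}\sin t\,(\cos t-1)\,u\,dt$ against $\int_0^{2\pi}\sin^2 t\,u\,dt$ via the same reflection.
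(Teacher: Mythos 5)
Your preliminary identities all check out: the equivalence of $u(2\pi,x)<x$ with $I(x)=\int_0^{2\pi}\sin t\,u\,dt>0$ and with $\int_0^{2\pi}B/u\,dt<0$, the two first-harmonic identities obtained by integrating $\frac{d}{dt}(\sin t/u)$ and $\frac{d}{dt}(\cos t/u)$, and hence the formula
\[
\int_0^{2\pi}\frac{B}{u}\,dt=-b_2\int_0^{2\pi}\sin^2t\,u\,dt+b_1\int_0^{2\pi}\sin t\,(\cos t-1)\,u\,dt
\]
are all correct, and they do settle the case $b_1=0$ (where $b_2>0$ forces the right-hand side to be negative). But for $b_1\neq0$ this is not a proof: the step you yourself flag as the hardest is left entirely open, and it is not a residual technicality --- it is the whole statement. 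Indeed $D(0)=u(0,x)-v(0)=x-u(2\pi,x)$, so ``prove $D(0)>0$'' is verbatim the conclusion of the proposition; the reflection construction merely repackages it as the moment inequality $\int_0^{\pi}e^{-\int_0^s a}v(s)^2\cos s\,ds<0$, in which the weight still depends on the unknown solution through the full nonlinear flow, and no mechanism is offered for establishing it. The sign information you do extract ($D>0$ on $[\pi/2,\pi)$) does not control $\int_0^{\pi}\sin t\,D\,dt$, since $D$ may a priori be negative near $t=0$, precisely where the positive part of $\cos s$ has pushed it down. So there is a genuine gap for general $b_1$.

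For what it is worth, the paper does not prove this proposition either: it is imported verbatim from \cite{BT} (Theorem~2.4 there), which establishes non-existence of positive periodic solutions for this trigonometric family by its own criterion. A self-contained argument along your lines would require actually closing the moment inequality $\int_0^{\pi}W(s)\cos s\,ds<0$ (or the equivalent bound of $b_1\int_0^{2\pi}\sin t\,(\cos t-1)\,u\,dt$ against $b_2\int_0^{2\pi}\sin^2 t\,u\,dt$); as written, your text proves the claim only when $b_1=0$.
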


Let $u(t,x,a_0,b_0)$ be the solution of~\eqref{eq:Abel_trigonometric_linear}
determined by $u(0,x,a_0,b_0)=x$.
Note that the family \eqref{eq:Abellinear2} is monotonic with respect to both $a_0$ and $b_0$, so that the same holds for $u(t,x,a_0,b_0)$. In particular, we obtain the following result.  

\begin{coro}
If $a_0,b_0<0$ then \eqref{eq:Abellinear2} has no positive 
limit cycles. 
\end{coro}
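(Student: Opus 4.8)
The plan is to deduce the corollary from Proposition~\ref{prop:nosolutions} by a comparison argument based on the monotonicity of the family in $a_0$ and $b_0$. First I would record the monotonicity explicitly: writing $F(t,x,a_0,b_0)=A(t)x^3+B(t)x^2$ for \eqref{eq:Abellinear2}, one has $F_{a_0}=(1-\cos t)\,x^3\ge0$ and $F_{b_0}=x^2>0$ for $x>0$. Hence for $a_0,b_0<0$ and every $x>0$ we get $F(t,x,a_0,b_0)<F(t,x,0,0)$, the inequality being strict because of the term $b_0x^2$. Consequently, any positive solution $\gamma$ of \eqref{eq:Abellinear2} is a strict lower solution of the limiting equation with $a_0=b_0=0$, that is, $\gamma'(t)=F(t,\gamma,a_0,b_0)<F(t,\gamma,0,0)$.

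Next I would argue by contradiction. Suppose \eqref{eq:Abellinear2} has a positive limit cycle $\gamma(t)=u(t,x^*,a_0,b_0)$, so that $\gamma(0)=\gamma(2\pi)=x^*$ and, being periodic, $0<\gamma\le M$ on $[0,2\pi]$ for some $M$. The naive step is to compare $\gamma$ with the solution $w(t)=u(t,x^*,0,0)$ of the limiting equation issued from the same point $x^*$: since $\gamma$ is a strict lower solution, $w(t)>\gamma(t)$ for $t>0$, whence $w(2\pi)>\gamma(2\pi)=x^*=w(0)$, contradicting $u(2\pi,x,0,0)<x$ from Proposition~\ref{prop:nosolutions}.

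The main obstacle is that this forward comparison is not conclusive: the limiting equation has the larger cubic coefficient $A_0(t)=-\sin t$, which is positive on $(\pi,2\pi)$, so $w$ may blow up to $+\infty$ before $t=2\pi$, and Proposition~\ref{prop:nosolutions} would not apply. To circumvent this I would run the comparison \emph{backward} in time. Let $\tilde w$ be the solution of the limiting equation with terminal value $\tilde w(2\pi)=x^*$. Since $\gamma$ is a strict lower solution and $\gamma(2\pi)=\tilde w(2\pi)$, the standard comparison for strict differential inequalities gives $\tilde w(t)<\gamma(t)$ for all $t\in[0,2\pi)$; in particular $\tilde w\le M$. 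Moreover $\tilde w>0$ throughout, because distinct solutions of the Abel equation cannot cross the trivial solution $x\equiv0$. Thus $\tilde w$ stays trapped in $(0,M)$ and cannot blow up, so it is defined on all of $[0,2\pi]$.

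Finally I would close the argument: $\tilde w(0)\in(0,M)$ satisfies $\tilde w(0)<\gamma(0)=x^*$, and $u(2\pi,\tilde w(0),0,0)=\tilde w(2\pi)=x^*>\tilde w(0)$. This contradicts the inequality $u(2\pi,x,0,0)<x$ of Proposition~\ref{prop:nosolutions}, which holds for every $x>0$ whose solution is defined on $[0,2\pi]$. Hence no positive limit cycle can exist. The delicate point throughout is ensuring that the comparison is carried out against a solution of the limiting equation that is genuinely defined on the whole period; replacing the forward comparison by the backward one, squeezed between $0$ and the bounded orbit $\gamma$, is precisely what guarantees this.
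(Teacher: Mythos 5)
Your proof is correct and follows essentially the same route as the paper, which simply invokes the monotonicity of \eqref{eq:Abellinear2} in $(a_0,b_0)$ together with Proposition~\ref{prop:nosolutions} and treats the comparison as immediate. Your backward-in-time comparison, squeezing the limiting solution between $0$ and the bounded periodic orbit, is a legitimate (and welcome) way to handle the possible finite-time blow-up of the forward solution of the $a_0=b_0=0$ equation, a point the paper's one-line derivation glosses over.
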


Now we verify that \eqref{eq:Abel_trigonometric_linear} satisfies the hypotheses of Theorem~\ref{theo:bif}
for $a_0,b_0$ close to zero. 

\medskip 

In the next subsection, we will show that, while Proposition~\ref{prop:upper} can be used to show that $(C_2)$ holds for certain values of $b_1,b_2$, the hypotheses of Proposition \ref{PropC3} do not hold completely in this case. The reason is that both of these propositions much be verified on $[0,2\pi]\times [0,+\infty)$, while conditions $(C_2),(C_3)$ only need to be satisfied for the singular closed solutions. To avoid this problem, we use continuity arguments, studying the behaviour of the solutions at infinity in order to bound the region where there might be singular closed solutions.

\begin{prop}
  For each $b_1,b_2$ with $b_2>0$ there exists a neighbourhood of $(a_0,b_0)=(0,0)$ such that \eqref{eq:Abellinear2} satisfies $(C_1)$, $(C_2)$, and $(C_3)$ for any singular positive closed solution. % are fullfilled for any singular solution. 
\end{prop}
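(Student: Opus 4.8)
The plan is to reduce the whole statement to a single \emph{confinement} fact: for $(a_0,b_0)$ in a small enough neighbourhood $U$ of $(0,0)$ every positive closed solution of \eqref{eq:Abellinear2} has uniformly small amplitude, after which $(C_2)$ and $(C_3)$ become mere continuity statements anchored on the $t$-axis, where they hold at $(0,0)$. I would first dispose of $(C_1)$ by hand. Writing $A(t)=a_0(1-\cos t)-\sin t$, one has $A(0)=0$ and $A'(0)=-1$ for every $a_0$, while $A(t)=0$ in $(0,2\pi)$ is equivalent to $\tan(t/2)=1/a_0$, which has exactly one solution; hence $A$ has a single further simple zero $t_A$, depending continuously on $a_0$ and equal to $\pi$ at $a_0=0$. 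For $B$ I would use $B(0)=B(2\pi)=b_0+b_2>0$ together with $B(t_A)<0$ (true near the base values since $B(\pi)=-b_2<0$ at $(0,0)$): the intermediate value theorem then produces one zero of $B$ in $(0,t_A)$ and one in $(t_A,2\pi)$, and since $b_0+b_1\sin t+b_2\cos t$ has at most two zeros per period these are all of them and simple. Thus $0<t_{B_1}<t_A<t_{B_2}<2\pi$ and $(C_1)$ holds on a neighbourhood of $(0,0)$.

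For the confinement I would build on Proposition~\ref{prop:nosolutions}. At $(a_0,b_0)=(0,0)$ the return map satisfies $u(2\pi,x)<x$ wherever defined, so the gap $g(x)=x-u(2\pi,x,0,0)$ is strictly positive for $x>0$ and, from the power expansion, $g(x)\sim \pi b_2\,x^4$ as $x\to 0^+$. On each compact $[\eta,M]\subset(0,\infty)$ one has $g\ge c_{\eta,M}>0$, and by continuous dependence on parameters $x-u(2\pi,x,a_0,b_0)>0$ there too once $(a_0,b_0)$ is close enough to $(0,0)$; consequently no closed solution can have amplitude in $[\eta,M]$. To remove the high-amplitude range I would exploit the sign pattern of $A$: near the base parameters $A<0$ on $(0,t_A)$ and $A>0$ on $(t_A,2\pi)$, so on the last subinterval the cubic term forces sufficiently large solutions to blow up before $t=2\pi$; an upper-solution barrier then yields a uniform bound $M$ beyond which no solution survives to be periodic. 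Combining the two ranges gives $\delta>0$ and a neighbourhood $U$ such that every positive closed solution with $(a_0,b_0)\in U$ satisfies $0<\tilde u(t)\le\delta$ for all $t$.

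With confinement in hand, $(C_2)$ and $(C_3)$ follow by continuity. For $(C_2)$, the function $2A(t)\tilde u(t)+B(t)$ is $C^0$-close to $B(t)$ when $\delta$ is small; since $B$ has only the simple zeros $t_{B_1},t_{B_2}$, is bounded away from $0$ elsewhere, and satisfies $B(t_A)<0$, $B(0),B(2\pi)>0$, the perturbed function keeps exactly one simple zero in each of $[0,t_A]$ and $[t_A,2\pi]$, which is $(C_2)$. For $(C_3)$, I would note that because $A(0)=A(2\pi)=0$ the function $v$ is independent of $x$ on the two vertical sides, with $v(0,x)=v(2\pi,x)=4\big(A'(0)B(0)-A(0)B'(0)\big)$, while $v(t,0)=4\big(A'(t)B(t)-A(t)B'(t)\big)$; at $(0,0)$ all of these equal $-4b_2<0$, so by continuity $v<0$ on a uniform strip $\{0\le x\le \delta_0\}$ and on both sides for $(a_0,b_0)\in U$. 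Shrinking $\delta$ below $\delta_0$, the confined solution lies in this strip, giving $v(t,\tilde u(t))<0$, i.e. $(C_3)$. This is precisely the point where the continuity argument must replace Proposition~\ref{PropC3}, whose global hypothesis $v^{-1}(0)\cap\dot v^{-1}(0)=\emptyset$ may fail on all of $S$.

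The hard part will be the high-amplitude half of the confinement. At $(0,0)$ the leading behaviour at infinity is \emph{neutral}, since $\int_0^{2\pi}A\,dt=0$, so the absence of large closed solutions is not visible from a single averaged quantity and rests on the finer information of Proposition~\ref{prop:nosolutions}. Making the blow-up bound $M$ uniform in $(a_0,b_0)$ near $(0,0)$ — whether by compactifying $x=\infty$ and analysing the flow there, or by an explicit upper-solution barrier on $(t_A,2\pi)$ where $A>0$ — is the step that must be carried out with care; once it is in place, every remaining ingredient is continuity from the base case.
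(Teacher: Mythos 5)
Your overall strategy --- confine every positive closed solution to a thin strip $0<x\le\delta$ near the $t$-axis, then deduce $(C_2)$ and $(C_3)$ by continuity from the base case $a_0=b_0=0$ --- is exactly the strategy of the paper, and your treatment of $(C_1)$, of the intermediate amplitudes via Proposition~\ref{prop:nosolutions}, and of $(C_2)$, $(C_3)$ once confinement is granted is essentially sound (modulo two routine points: a bound on $\tilde u$ must be converted, through the equation, into a bound on $\tilde u'$ in order to preserve the \emph{simplicity} of the zeros of $2A\tilde u+B$; and a closed solution whose initial value is small must be shown to stay uniformly small on all of $[0,2\pi]$, which follows from continuous dependence on $u(t,0)\equiv0$).

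The genuine gap is the one you yourself flag: the exclusion of large closed solutions. Your preferred route --- an explicit upper-solution barrier on the subinterval where $A>0$ forcing blow-up, hence ``a uniform bound $M$ beyond which no solution survives to be periodic'' --- does not work as stated, because such a barrier can only control $\tilde u(t)$ for $t$ bounded away from the zeros of $A$; near $t=0$ and $t=2\pi$ the cubic term degenerates, and the supremum $u_\infty(t)$ of the bounded solutions in fact tends to $+\infty$ as $t\to0^+$ (in the chart $y=1/x$ the relevant separatrix is the unstable manifold of a saddle at $(t,y)=(0,0)$ of \eqref{eq:sistemainfinito}). Hence there is no uniform bound on the initial value $\tilde u(0)$ of solutions that remain bounded on $[0,2\pi]$, and the barrier argument cannot close the range $x>M$. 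The paper circumvents this with two ingredients you do not supply: (i) the analysis at infinity of Appendix~A, which uses Proposition~\ref{prop:nosolutions} together with the stability formula for the homoclinic loop to rule out a connection at infinity at $a_0=b_0=0$, so that $u_\infty(t,0,0)\to0$ as $t\to\infty$ and $u_\infty$ depends continuously on $(a_0,b_0)$; and (ii) the periodicity trick $\tilde u(t)=\tilde u(t+2\pi n)\le u_\infty(t+2\pi n,a_0,b_0)<\delta_0$, which transfers the smallness of $u_\infty$ at large times back to $[0,2\pi]$ precisely because $\tilde u$ is closed. Without (i)--(ii), or a worked-out substitute (your alternative suggestion of compactifying $x=\infty$ is in fact what the paper does, but you do not carry it out), the confinement --- and with it the whole proposition --- is not established.
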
	
\begin{proof}
  Fix a neighbourhood $U=[-\epsilon_1,\epsilon_1]\times [-\epsilon_2,\epsilon_2]$ of $(a_0,b_0)=(0,0)$ in which the functions $A$ and $B$ have two zeros in $[0,2 \pi)$ and are interleaved. So, for any $b_1,b_2\neq 0$ and any $a_0,b_0$ in that neighbourhood, $(C_1)$ holds.

\medskip 
To prove $(C_2)$,
note that  $B$ has a simple zero in each of the intervals $(0,t_A)$ and $(t_A,2\pi]$. Thus, we can choose  $\delta_0>0$ 
so that,
for any positive smooth function $w$  satisfying 
\[
|w(t)|,|w'(t)| < \delta_0\quad \text{for all } 
t\in[0,2\pi],
\]
the function $2Aw+B$ also has a simple zero in each of the intervals $(0,t_A)$ and $(t_A,2\pi]$.

Denote
\[
u_\infty(t,a_0,b_0)=\sup \{u(t,x,a_0,b_0)\colon u(\cdot,x,a_0,b_0)\text{ being bounded in }[0,2\pi]\}.\]

Then $u_\infty(\cdot,a_0,b_0)$ is well-defined 
except for certain values of $t$ where the supremum is infinite. Moreover,  it is a solution of~\eqref{eq:Abel_trigonometric_linear} in each interval 
where it is defined. 
We will show in  Appendix~A that $u_\infty(t,a_0,b_0)$ is defined and is continuous for $t > 0$ and $(a_0,b_0)$
in a neighbourhood of $(0,0)$, and that
\[
\lim_{t\to\infty} u_\infty(t,0,0)=0. 
\]
Therefore,  there exists $n\in\mathbb{N}$ and a neighbourhood $U$ of $(0,0)$ such that $u_\infty(t+2\pi n,a_0,b_0)<\delta_0$ for all $t\in[0,2\pi]$ and $(a_0,b_0)\in U$, and 
\[
|\left(a_0-\sin t-a_0\cos t\right)x^3 + \left(b_0+b_1\sin t+b_2\cos t  \right)x^2|<\delta_0
\]
for all $t\in[0,2\pi]$, $x\in[0,u_\infty(t+2\pi n,x_0,a_0,b_0)]$, and $(a_0,b_0)\in U$.

\medskip

Let $u(t,\tilde x,a_0,b_0)$ be any singular positive closed solution  of~\eqref{eq:Abel_trigonometric_linear} with $(a_0,b_0)\in U$. Then  
$ u(t,\tilde x,a_0,b_0)= u(t+2\pi n,\tilde x,a_0,b_0)<u_\infty(t,a_0,b_0)<\delta_0$, and condition $(C_2)$ holds.

\medskip 

The last step is to prove that $(C_3)$ holds. It suffices to show that, for each $b_1,b_2$, with $b_2>0$, there exists a neighbourhood of $(a_0,b_0)=(0,0)$ such that 
the graph of any singular positive closed solution $\tilde u$ is disjoint with $v^{-1}(0)$. Hence,  the sign of $v(t,\tilde u(t,x))$ does not change
and is the same as the sign of $A'(0)B(0)$ (negative in this case). 

Since $v(t,0) = 4(B (t)A '(t)-B '(t)A (t))$, then $v(t,0)= -4b_2<0$ for $a_0=b_0=0$.
Making $\delta_0$ and $U$ smaller if necessary, we have	
 $v(t,x)<0$ for any $0<x\leq \delta_0$ and $(a_0,b_0)\in U$. To conclude,
it suffices to prove that  every 
singular positive closed solution $\tilde u(t,x)$ satisfies $\tilde u(t,x)\leq \delta_0$.
But that holds by the previous discussion, so there exists a neighbourhood of $a_0=b_0=0$ such that $(C_3)$ holds. 
\end{proof}

Let us prove that the maximum number of  positive closed solutions is two in a neighbourhood of $(a_0,b_0)=(0,0)$ in the quadrant where the double Hopf bifurcation occurs. 

\begin{theo}\label{theo:doslineal}
Assume there exists $\epsilon>0$ such that for every $-\epsilon<b_0<0<a_0<\epsilon$, \eqref{eq:Abellinear2} satisfies $(C_1)$, $(C_2)$,  and $(C_3)$ for any singular positive closed solution. 
Then \eqref{eq:Abellinear2} has at most two positive closed solutions for every $-\epsilon<b_0<0<a_0<\epsilon$.
\end{theo}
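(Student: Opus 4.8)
The plan is to read Theorem~\ref{theo:doslineal} as an instance of the rotated-family mechanism of Theorem~\ref{theo:bifrotated}, anchored on the line $b_0=0$ where the counting is already known, and then to sharpen the generic bound $N\le 3$ to $N\le 2$ by discarding the bifurcation of infinity. First I would record the semistability input. For \eqref{eq:Abellinear2} one has $A'(0)=-1<0$ and, by the standing normalization, $B(0)=b_0+b_2>0$, so Remark~\ref{rem:signos} applies. Since by hypothesis $(C_1),(C_2),(C_3)$ hold for every $(a_0,b_0)$ with $-\epsilon<b_0<0<a_0<\epsilon$, Theorem~\ref{theo:bif} gives $u_{xx}(2\pi,\tilde x)<0$ for every positive singular closed solution $\tilde u$ in that quadrant.

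Next I would set up the rotated family. Fix $a_0\in(0,\epsilon)$ and take $\lambda=b_0$ as bifurcation parameter; as $F_\lambda=x^2>0$ for $x>0$, the family is monotonic in $b_0$. Given a target $b_0^\ast\in(-\epsilon,0)$, choose $\lambda_1\in(-\epsilon,b_0^\ast)$ and $\lambda_2=0$, so that $b_0^\ast\in(\lambda_1,\lambda_2)$. On $[\lambda_1,\lambda_2)$ the inequality $u_{xx}<0$ holds by the previous step, while at $\lambda_2=0$ it holds vacuously: there $a_0b_0=0$, so by \cite[Theorem 1.2]{BFG} every positive limit cycle is simple and no singular positive closed solution exists. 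Hence Theorem~\ref{theo:bifrotated} applies and yields
\[
N(b_0^\ast)\le N(0)+2,
\]
where, again by \cite[Theorem 1.2]{BFG}, $N(0)\le 1$, and where the two possible extra solutions come respectively from a Hopf bifurcation of the origin and a Hopf bifurcation of infinity.

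The crux, and the step I expect to be the main obstacle, is to exclude the bifurcation of infinity so as to replace the $+2$ by $+1$. Here I would invoke the behaviour-at-infinity analysis developed above: every positive closed solution is bounded, hence lies below the maximal bounded solution $u_\infty(\cdot,a_0,b_0)$, which (Appendix~A) is finite and continuous near $(0,0)$ and for the relevant range forces $\tilde u<\delta_0$. Consequently the positive closed solutions stay inside a fixed bounded region uniformly as $b_0$ ranges over $[\lambda_1,0]$, so no branch of closed solutions can escape to infinity and the bifurcation of infinity does not occur. Only the origin can then contribute an extra solution, whence
\[
N(b_0^\ast)\le N(0)+1\le 2.
\]

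Since $b_0^\ast\in(-\epsilon,0)$ and $a_0\in(0,\epsilon)$ were arbitrary, ranging $a_0$ over $(0,\epsilon)$ covers the whole quadrant and \eqref{eq:Abellinear2} has at most two positive closed solutions there, as claimed. The two delicate points are the verification that the endpoint $\lambda_2=0$ carries no singular closed solution (so the closed-interval hypothesis of Theorem~\ref{theo:bifrotated} is met) and, above all, the exclusion of the bifurcation of infinity, which rests entirely on the uniform boundedness of the closed solutions supplied by the behaviour at infinity; the bifurcation of the origin is the one that is genuinely present and accounts for the admissible $+1$.
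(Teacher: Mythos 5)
Your skeleton coincides with the paper's: Theorem~\ref{theo:bif} gives $u_{xx}(2\pi,\tilde x)<0$ for all singular positive closed solutions, the family is rotated with respect to $\lambda=b_0$, the anchor is $b_0=0$ where \cite[Theorem 1.2]{BFG} gives at most one simple positive closed solution (so the endpoint hypothesis of Theorem~\ref{theo:bifrotated} is vacuously satisfied, as you correctly note), and Theorem~\ref{theo:bifrotated} yields $N(b_0)\le 1+2$. You also correctly identify that the whole content of the proof is in discarding the bifurcation of infinity. Where you diverge is precisely there. The paper does it with a stability argument that uses only data already at hand: from the power-series expansion of $u(2\pi,x)$, the origin is unstable for $b_0=0$, $a_0>0$; hence the unique positive closed solution at $b_0=0$, if it exists, is stable, so infinity is unstable; and since $F$ is increasing in $b_0$, infinity remains unstable for all $b_0<0$, so no closed solution can bifurcate from infinity. (If no positive closed solution exists at $b_0=0$, the bound $N\le 0+2$ is already enough.) You instead rule out the infinity branch by uniform boundedness: if every positive closed solution for every $b_0\in[\lambda_1,0]$ is bounded by a fixed $\delta_0$, then the domain of $\Lambda$ is contained in $(0,\delta_0]$ and its last component is a closed interval, which indeed kills the ``$+1$'' coming from infinity in the proof of Theorem~\ref{theo:bifrotated}. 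That implication is sound.

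The weak point is that the uniform bound $\tilde u<\delta_0$ is not a consequence of the hypotheses of Theorem~\ref{theo:doslineal}. Conditions $(C_1)$--$(C_3)$ constrain only the \emph{singular} closed solutions, and even for those they do not give boundedness by a fixed constant; the bound you invoke comes from the analysis of $u_\infty$ in Appendix~A and the preceding proposition, and is guaranteed only on some neighbourhood $U$ of $(a_0,b_0)=(0,0)$ that need not contain the quadrant $(-\epsilon,0)\times(0,\epsilon)$ of the statement. So as written your argument proves the theorem only after shrinking $\epsilon$ (or after adding the uniform boundedness as an explicit hypothesis), whereas the paper's stability-of-infinity argument needs nothing beyond the stated assumptions, the expansion of $u(2\pi,x)$ at $b_0=0$, and monotonicity in $b_0$. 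In the context in which the theorem is actually applied this is harmless, but you should either make the extra hypothesis explicit or replace this step by the stability argument.
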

\begin{proof}
By Theorem~\ref{theo:bif}, $ u_{xx}(t,\tilde x)<0$ for every  singular positive closed solution~$ u(t,\tilde x)$. 

Let $$F(t,x)=\left(a_0-\sin t-a_0\cos t\right)x^3 + \left(b_0+b_1\sin t+b_2\cos t  \right)x^2.$$
The derivative of function $F$ respect to $b_0$ is strictly positive for all $t\in(0,2\pi)$, 
and there is at most one simple positive closed solution of \eqref{eq:Abellinear2} for any $a_0 > 0$ and $b_0=0$ (see \cite[Theorem 1.2]{BFG}).

Fix $0<a_0<\epsilon$. Then
Theorem~\ref{theo:bifrotated} implies that, for every $-\epsilon<b_0$, 
\eqref{eq:Abel_trigonometric_linear} has at most three positive closed solutions. Moreover,
if there are three positive limit cycles, one of them corresponds to a Hopf bifurcation 
at the origin and another to a Hopf bifurcation at infinity. 

From \eqref{expandsolution}, we have that the origin is unstable for $b_0=0$. If there is a positive closed solution for $b_0=0$, it is 
stable, so that infinity is unstable, and therefore, for $b_0<0$, the infinity 
remains unstable, so that there is no bifurcation at infinity and the maximum number of positive closed solutions is two.
\end{proof}

If we could prove that \eqref{eq:Abellinear2} satisfies $(C_1)$,
$(C_2), and (C_3)$ for any singular positive closed solution whenever
$A,B$ have two interleaved zeros, then a similar argument would prove
the upper bound of two positive closed solutions for
\eqref{eq:Abellinear2} with no additional conditions.

The main drawback of Theorem~\ref{theo:doslineal} is that there is no
clear way to estimate the value of $\epsilon$ for which the theorem
holds. A possible way of doing that would be to bound the homoclinic
connection at infinity, following~\cite{GGT} for instance, or
improving the results in Section~3. In the following, we shall
illustrate the limitations of these results.

\subsection{Algebraic computation of the conditions}

To conclude the section, we explore the applicability of Propositions~\ref{prop:upper} and \ref{PropC3} to Abel equation \eqref{eq:Abellinear2} with $b_0 + b_2 > 0$.

\medskip 

Proposition~\ref{prop:upper} applies in a certain region of the parameter space.
\begin{prop}\label{prop:C2trig}
Assume condition $(C_1)$ holds and let 
\[q = \frac{\sqrt{b_1^2+b_2^2}}{b_2^{1/3}}-2^{2/3}.\]
If $\vert a_0 \vert$ and $\vert b_0 \vert$ are small enough and either $q < 0$ or $b_2 > 2$, then condition $(C_2)$ holds. \end{prop}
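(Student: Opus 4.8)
The plan is to apply Proposition~\ref{prop:upper}, which reduces the verification of $(C_2)$ to showing that the polynomial $P(t)=4(B(t)A'(t)-B'(t)A(t))-B^3(t)$ has at most one zero in each of the intervals $J_1=(0,t_{B_1})$ and $J_2=(t_A,t_{B_2})$. Since by hypothesis $|a_0|$ and $|b_0|$ are small, I would first treat the \emph{reference case} $a_0=b_0=0$, where $A(t)=-\sin t$ and $B(t)=b_1\sin t+b_2\cos t$, and then recover the general statement by a continuity/perturbation argument. The quantity $q=\sqrt{b_1^2+b_2^2}\,/\,b_2^{1/3}-2^{2/3}$ is manifestly a condition only on $b_1,b_2$, so it should control the shape of $P$ in the unperturbed case, and smallness of $a_0,b_0$ should preserve the relevant sign pattern.

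First I would compute $P(t)$ explicitly when $a_0=b_0=0$. Here $B(t)A'(t)-B'(t)A(t)$ is a trigonometric polynomial of low degree (it is the Wronskian-type combination of two functions each spanned by $\sin t,\cos t$), and $B^3(t)$ expands via power-reduction into harmonics up to the third. The idea is that after the substitution $z=\tan(t/2)$ (the same change of variables already used in the Proposition preceding Remark~\ref{rem:signos}), $P$ becomes a rational function whose numerator is a polynomial in $z$ of controlled degree, and the count of zeros of $P$ in each $J_i$ becomes a count of real roots of that numerator in a corresponding $z$-interval. I would then show that the hypothesis $q<0$ (equivalently $b_1^2+b_2^2<2^{4/3}b_2^{2/3}$, i.e.\ the cubic term $B^3$ dominates suitably) or the alternative $b_2>2$ forces the number of sign changes of this numerator to be at most one on each interval. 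Concretely, I expect the condition $q<0$ to guarantee that the cubic part $-B^3$ does not introduce extra oscillations relative to the linear part $4(BA'-B'A)$, while $b_2>2$ handles the complementary regime where $B^3$ is large but of a sign that keeps $P$ monotone-like across each $J_i$.

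The continuity step is then to argue that having \emph{strictly} at most one simple zero of $P$ in each $J_i$ for $(a_0,b_0)=(0,0)$ is an open condition: since $P$ depends continuously (indeed smoothly) on $(a_0,b_0)$ together with the endpoints $t_{B_1},t_A,t_{B_2}$, and since the zeros under consideration are simple (so $P'\neq 0$ there), for $|a_0|,|b_0|$ small enough $P$ still has at most one zero in each $J_i$. One subtlety to address is that the interval endpoints themselves move with $(a_0,b_0)$, but they do so continuously and the zeros of $P$ stay away from the boundaries, so the root count is stable. Once $P$ has at most one zero in each $J_i$, Proposition~\ref{prop:upper} yields $(C_2)$ directly.

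The main obstacle I anticipate is the explicit root-counting for the numerator polynomial in $z$: one must pin down exactly how $q<0$ and $b_2>2$ translate into a bound on the number of sign changes, and these two regimes likely require separate analyses (the first controlling the discriminant-type interplay between the cubic and linear contributions, the second a crude dominance estimate). A clean way to organize this is again via a Sturm or Budan--Fourier sign-variation count, as was done for $Q(t)$ in Proposition~\ref{prop2}, evaluating the sequence at the endpoints of each $J_i$; the dependence on $t_{B_1},t_A,t_{B_2}$ (which are themselves functions of $b_1,b_2$) is what makes the bookkeeping delicate, and keeping the symbolic discriminants factored in the form displayed for $f(t_A,t_B)$ and $g(t_A,t_B)$ will be essential to reading off their signs.
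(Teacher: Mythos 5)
Your skeleton matches the paper's: reduce to Proposition~\ref{prop:upper}, pass to the reference case $a_0=b_0=0$, apply the tangent half-angle substitution so that $(z^2+1)^3\bar P(z)$ becomes a degree-six polynomial $p(z)$, and recover small $|a_0|,|b_0|$ by continuity. But the heart of the proof --- the mechanism by which $q<0$ or $b_2>2$ actually controls the zeros of $P$ --- is left as ``I would then show\dots'', and the mechanism you guess is not the right one. What the paper does is compute the discriminant of $p(z)$ and observe that $\sgn(\disc p)=\sgn(q)$; since the number of real roots of $p$ is therefore constant on each connected component of $\{q\neq 0\}$, a single sample point per component settles it: for $q<0$ the polynomial $p$ has \emph{no} real roots at all (not merely ``at most one sign change per interval''), and for $q>0$ it has exactly two. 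Your heuristic that $q<0$ means ``the cubic term does not introduce extra oscillations'' and that $b_2>2$ is a ``complementary regime where $P$ is monotone-like'' is inverted: $b_2>2$ forces $q>0$, i.e.\ it sits inside the two-real-roots regime, so knowing the global root count is not enough --- you must still rule out both roots landing in the same interval $J_i$.

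That localization is the second missing ingredient: the paper's Lemma~\ref{lema4} shows that when $\bar P$ has two real zeros and $2a_0b_1+(a_0^2+1)(b_0+2)<(1-a_0^2)b_2$ (which follows from $b_2>2$ for $|a_0|,|b_0|$ small), the sign evaluation $\bar P(-a_0)>0$ together with the negative leading coefficient of $(z^2+1)^3\bar P(z)$ places one root on each side of the zero of $\bar A$, hence at most one in each of $J_1$ and $J_2$. Your alternative plan --- Sturm or Budan--Fourier sequences evaluated at the moving endpoints $t_{B_1},t_A,t_{B_2}$ --- is in principle a root-counting method, but it gives you no route to \emph{derive} the specific conditions $q<0$ and $b_2>2$, and with symbolic endpoints depending on $b_1,b_2,a_0,b_0$ it is unlikely to be tractable. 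So while the reduction and the perturbation step are sound, the proposal as written does not contain the two ideas (the discriminant identity tying the root count of $p$ to the sign of $q$, and the single-sign-evaluation separation of the roots by the zero of $A$) that make the proof go through.
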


\begin{figure}[h] 
\begin{center}
\begin{tikzpicture}[scale=1.5]
\clip (-3,-.5) rectangle (3,2.5);
\draw[help lines, color=gray!30, dashed] (-2.5,-.5) grid (2.5,2.5);
\draw[->,thick] (-2.5,0)--(2.5,0) node[below left]{$b_1$};
\draw[->,thick] (0,-.5)--(0,2.5) node[below left]{$b_2$};
\draw[smooth=0.7,ultra thick,oro!50!rojo] plot[domain=0:2,samples=200]({sqrt(2^(4/3)*\x^(2/3)-\x^2)},\x);
\draw[smooth=0.7,ultra thick,oro!50!rojo] plot[domain=0:2,samples=200](-{sqrt(2^(4/3)*\x^(2/3)-\x^2)},\x);
\draw[smooth=0.7,ultra thick,oro!50!rojo] plot[domain=-.1:.1,samples=200](\x,{2});
\end{tikzpicture}
\end{center}
\caption{Graph of $q=0$.}\label{fig:q}
\end{figure}
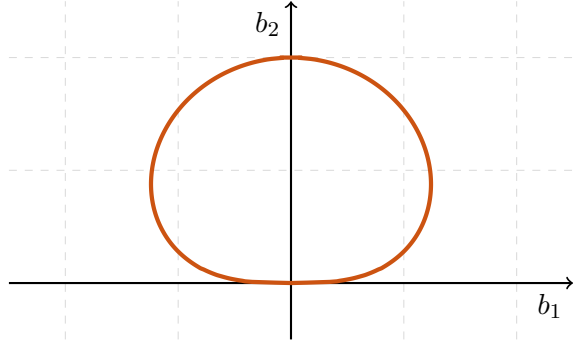

\medskip

Before proving Proposition~\ref{prop:C2trig}, we need some preliminary results.
By the change of variable $t=2\operatorname{atan}(z)+\pi$, we obtain that 
\[\bar A(z) := A(2\operatorname{atan}(z)+\pi) = \frac{2(z+a_0)}{z^2+1}\] and \[\bar B(z) :=  B(2\operatorname{atan}(z)+\pi)= \frac{(b_0+b_2) z^2 -2 b_1 z + b_0 - b_2}{z^2+1}.\] Notice that $\bar A(z)$ has only the zero $-a_0$ and that $\bar B(z)$ has the two zeros \[z^\pm_B := \frac{b_1 \pm \sqrt{b_1^2-(b_0^2-b_2^2)}}{b_0+b_2},\] and, since $2\operatorname{atan}(z)+\pi$ is strictly increasing, $b_0+b_2 > 0$, and the zeros of $A$ and $B$ are interleaved, we have that \[\bar B(-a_0) = \frac{(b_0+b_2)a_0^2 + 2b_1a_0+b_0-b_2}{a_0^2+1} < 0.\]

Moreover, one can easily check that this change of variable transforms
the function $P(t)$ defined in \eqref{ecu:v} into the rational
function $\bar P(z)$ such that $(z^2+1)^3 \bar P(z)$ is a polynomial
of degree six in $z$.

\medskip
Let $a_0 = b_0 = 0$. In this case, set $z_B := z= z_B^+ > 0$, so that \[b_1 = b_2 ( z_B^2-1)/(2 z_B),\quad z_B^- = -1/z_B.\] With this notation, $(z^2+1)^3 \bar P(z)_{\vert a_0=b_0 = 0} = (b_2/z_B^3)\, p(z)$ where
 \[
\begin{split}
p(z) = & (b_2^2-4) z_B^3 + 3 b_2^2 z_B^2 (z_B^2-1) z + 3 (-4 z_B^3 + 
   b_2^2 (z_B - 3 z_B^3 + z_B^5)) z^2 
   \\ & +b_2^2 (z_B^2-1)(z_B^4-8 z_B^2+1) z^3  -3 (4 z_B^3 + b_2^2 (z_B - 3 z_B^3 + z_B^5)) z^4
   \\& + 3 b_2^2 z_B^2 (z_B^2-1)z^5  -((b_2^2+4) z_B^3)z^6
\end{split}
\] 
The discriminant of $p(z)$ is \begin{equation}\label{discp}\Delta := 186624\, b_2^8(b_2^2+4)z_B^{15}(z_B^2+1)^{12} \big( b_2^4(z_B^2+1)^6-2^{10}\, z_B^6 \big).\end{equation} Since $z_B, b_2 > 0$, the sign of $\Delta$ is equal to the sign of \[b_2^{2/3}(z_B^2+1)-2^{5/3}\, z_B = 2 z_B \left(\frac{b_2^{2/3}(z_B^2+1)}{2 z_B} - 2^{2/3}\right)\] whose sign agrees with the sign of 
\[
\frac{b_2^{2/3}(z_B^2+1)}{2 z_B} - 2^{2/3} = \frac{\sqrt{b_1^2+b_2^2}}{b_2^{1/3}}-2^{2/3}=q.\]
Therefore the number of roots of $p(z)$ remains constant in each of the two connected regions determined by $q = 0$.

\begin{itemize}
\item For $b_1=0$ and $b_2=1$ we have that $q < 0$, and in this case $p(z) = -5z^6-9z^4-15z^2-3$ has no real roots. Hence $\bar P(z)_{\vert a_0=b_0 = 0}$ has no real zeros when $q < 0$, and the same holds for $\bar P(z)$ for $\vert a_0\vert, \vert b_0 \vert$ small enough and $q < 0$.
\item In the region where $q > 0,$ the polynomial $p(z)$ has exactly two real roots (take, for example, $b_1=1$ and $b_2 = 2$). Hence $\bar P(z)_{\vert a_0=b_0 = 0}$ has two real zeros when $q > 0$, and the same holds for $\bar P(z)$ for $\vert a_0\vert, \vert b_0 \vert$ small enough and $q > 0$.
\end{itemize}

\begin{lema}\label{lema4}
If $2 a_0 b_1 + (a_0^2+1) (b_0 + 2) < (1 - a_0^2) b_2$ and $\bar P(z)$ has two real zeros then they are separated by the zero of $\bar A(z)$.
\end{lema}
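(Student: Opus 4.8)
The plan is to reduce the separation statement to a single sign condition and then evaluate that condition explicitly. Since the map $z\mapsto t=2\operatorname{atan}(z)+\pi$ is strictly increasing, the real zeros of $\bar P(z)$ are exactly the real zeros of the degree-six polynomial $\widetilde P(z):=(z^2+1)^3\bar P(z)$, and requiring that the two real zeros of $\bar P$ be separated by the zero $z=-a_0$ of $\bar A$ amounts to requiring that $-a_0$ lie strictly between the two real roots of $\widetilde P$. First I would pin down the global sign pattern of $\widetilde P$. Its leading coefficient equals $\lim_{z\to\pm\infty}\bar P(z)=P(0)$, because $\bar P(z)=P(2\operatorname{atan}(z)+\pi)$ and $P$ is continuous and $2\pi$-periodic. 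Using $A(0)=0$ one gets $P(0)=B(0)\bigl(4A'(0)-B(0)^2\bigr)$, and since $A'(0)=-1<0$ and $B(0)=b_0+b_2>0$, this is negative; hence $\widetilde P$ has negative leading coefficient.

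Consequently, since in the region considered here the two real zeros of $\widetilde P$ are simple (the remaining four roots forming two complex-conjugate pairs), $\widetilde P$ is negative beyond both roots and positive strictly between them. Therefore $-a_0$ lies between the two roots as soon as $\widetilde P(-a_0)>0$, equivalently $\bar P(-a_0)>0$, since $\widetilde P(-a_0)=(a_0^2+1)^3\bar P(-a_0)$. Thus only this one implication of the sign analysis is needed: it suffices to show that the hypothesis forces $\bar P(-a_0)>0$.

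Finally I would compute $\bar P(-a_0)$. Writing $t_A=2\operatorname{atan}(-a_0)+\pi$, we have $A(t_A)=0$, so $\bar P(-a_0)=P(t_A)=B(t_A)\bigl(4A'(t_A)-B(t_A)^2\bigr)$. A direct trigonometric computation using $\cos t_A=(a_0^2-1)/(a_0^2+1)$ and $\sin t_A=2a_0/(a_0^2+1)$ gives $A'(t_A)=-\cos t_A+a_0\sin t_A=1$, while $B(t_A)=\bar B(-a_0)<0$ by the remark preceding the lemma. Hence $\bar P(-a_0)=B(t_A)\bigl(4-B(t_A)^2\bigr)$, which, because $B(t_A)<0$, is positive precisely when $B(t_A)<-2$. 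Substituting the explicit value $B(t_A)=\bigl((b_0+b_2)a_0^2+2a_0b_1+b_0-b_2\bigr)/(a_0^2+1)$ and clearing the positive denominator, the inequality $B(t_A)<-2$ rearranges exactly into $2a_0b_1+(a_0^2+1)(b_0+2)<(1-a_0^2)b_2$, which is the hypothesis. The argument is therefore essentially sign bookkeeping followed by one elementary rearrangement; the only genuinely structural point, and the place where care is needed, is establishing the global sign pattern of $\widetilde P$ from its negative leading coefficient together with the simplicity of its two real zeros, so that positivity of $\widetilde P(-a_0)$ really does force $-a_0$ to lie between the roots.
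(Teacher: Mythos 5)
Your proof is correct and takes essentially the same route as the paper's: both establish that $(z^2+1)^3\bar P(z)$ has negative leading coefficient, identify $\bar P(-a_0)=\bar B(-a_0)\bigl(4-\bar B(-a_0)^2\bigr)$, observe that the stated inequality is exactly $\bar B(-a_0)<-2$ and hence $\bar P(-a_0)>0$, and conclude that $-a_0$ must then separate the two real zeros. Your trigonometric evaluation of $A'(t_A)=1$ simply fills in a computation the paper states without derivation, and the appeal to simplicity of the real zeros could even be dropped in favour of the intermediate value theorem at $\pm\infty$.
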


\begin{proof}
Since $\bar B(-a_0) < 0$, the leading coefficient of $(z^2+1)^3
\bar P(z)$ is equal to $-(b_0+b_2)((b_0+b_2)^2+4) < 0$, and \[\bar
P(-a_0) = \bar B(-a_0)(4-\bar B(-a_0)^2),\] we conclude that $\bar
P(-a_0) > 0$ if $2+\bar B(-a_0) < 0$, or equivalently $2 a_0 b_1 +
(a_0^2+1) (b_0 + 2) < (1 - a_0^2) b_2$. Therefore, if $\bar P(z)$ has
two real zeros and $2 a_0 b_1 + (a_0^2+1) (b_0 + 2) < (1 - a_0^2)
b_2$ then the two zeros of $\bar P(z)$ are separated by $-a_0$, which
is the zero of $\bar A(z)$.
\end{proof}

\begin{proof}[Proof of Proposition~\ref{prop:C2trig}]
On the one hand, if $q<0$ then the function $P$ has no zeros in $(0,t_{B_1})$ and $(t_A,t_{ B_2})$. By continuity, it also holds for $|a_0|,|b_0|$ small enough. 

On the other hand, since the limit of $2 a_0/(1 - a_0^2) b_1 + (a_0^2+1)(b_0 + 2)/(1-a_0^2)$ as $(a_0,b_0)$ tends to $(0,0)$ is $2$, if $b_2 > 2$, then $q > 0$ and there exist $a_0,b_0$ small enough such that $2 a_0 b_1 + (a_0^2+1) (b_0 + 2) < (1 - a_0^2) b_2$. In this case, by Lemma \ref{lema4}, $P$ has at most one zero in each of the intervals $(0,t_{B_1})$ and $(t_A,t_{ B_2})$.

In both cases, by Proposition \ref{prop:upper}, we conclude that $(C_2)$ is fulfilled. 
\end{proof}

\medskip 
Finally, let us show that Proposition \ref{PropC3} does not apply in the case of Abel equation \eqref{eq:Abellinear2} with $b_0 + b_2 > 0$.

\begin{prop}\label{prop:noC3}
For $a_0=b_0=0$ and every $b_1, b_2$, the set $v^{-1}(0) \cap \dot v^{-1}(0)$ is not empty.
\end{prop}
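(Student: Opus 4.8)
The plan is to convert the algebraic identity underlying Corollary~\ref{CorC3} into an existence statement. First I would set $a_0=b_0=0$ in \eqref{eq:Abellinear2}—so that the standing hypothesis $b_0+b_2>0$ becomes $b_2>0$—obtaining $A(t)=-\sin t$ and $B(t)=b_1\sin t+b_2\cos t$, which satisfy $A''=-A$ and $B''=-B$. A short trigonometric computation then yields the two key simplifications $A B''-B A''\equiv 0$ and $B A'-A B'\equiv -b_2$. Substituting these into \eqref{eq:elim} collapses the defining cubic to
\[
Q(t)=-3b_2\,B'(t),
\]
and, since $P=4(BA'-AB')-B^3=-4b_2-B^3$, the same relations reduce the function $v$ to
\[
v(t,x)=4\bigl(A(t)^2B(t)\,x^2+A(t)B(t)^2\,x-b_2\bigr).
\]

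Next I would read the identity of Corollary~\ref{CorC3} along the zero set $v^{-1}(0)$. Observe that $v^{-1}(0)$ cannot meet $\{B=0\}$, because wherever $B(t)=0$ one has $v(t,x)=P(t)=-4b_2\neq 0$. Consequently, on $v^{-1}(0)$ the identity reduces to $Q(t)=-4B(t)\,\dot v(t,x)$, that is, $\dot v(t,x)=3b_2\,B'(t)/(4B(t))$, which vanishes exactly when $B'(t)=0$. Thus a point of $v^{-1}(0)$ belongs to $\dot v^{-1}(0)$ precisely when it lies over a critical point of $B$, and it suffices to exhibit a single pair $(\bar t,\bar x)$ with $\bar x>0$, $v(\bar t,\bar x)=0$ and $B'(\bar t)=0$.

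To produce it I would take $\bar t\in(0,2\pi)$ to be the point where $B$ attains its maximum $M=\sqrt{b_1^2+b_2^2}>0$, so that $B'(\bar t)=0$ and $B(\bar t)=M$. Setting $A_0=A(\bar t)$, the equation $v(\bar t,x)=0$ becomes the quadratic $A_0^2M\,x^2+A_0M^2\,x-b_2=0$, whose constant term $-b_2$ is negative (as $b_2>0$) and whose leading coefficient $A_0^2M$ is positive; by the intermediate value theorem it then has a unique positive root $\bar x$. The resulting point $(\bar t,\bar x)$ lies in $v^{-1}(0)\cap\dot v^{-1}(0)$, so this set is nonempty.

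The delicate step—and the one I expect to be the main obstacle—is guaranteeing that the leading coefficient $A_0^2M$ is nonzero, i.e.\ that the maximum of $B$ is not attained at a zero of $A$. Since $\sin\bar t=b_1/M$ at that maximum, this fails exactly on the line $b_1=0$, where both extrema of $B$ fall on the zeros $t=0,\pi$ of $A$ and the quadratic degenerates to the nonzero constant $-b_2$. I would therefore carry out the construction above for $b_1\neq 0$ and treat the symmetric case $b_1=0$ separately, analysing $v^{-1}(0)$ directly there.
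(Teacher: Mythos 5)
Your argument is in essence the paper's: since $Q=-3b_2B'$ here, the zeros of $Q$ in $(0,2\pi)$ are exactly the critical points of $B$, so your $\bar t$ (the argmax of $B$) is one of the two points $t_1,t_2$ that the paper works with, and both proofs come down to exhibiting a positive root of $v(\bar t,\cdot)=0$ over one of them. Your positivity argument (leading coefficient $A_0^2M>0$, constant term $-b_2<0$, hence exactly one positive root) is cleaner than the paper's explicit formula $x_i^{\pm}=(b_2\pm\sqrt{b_2^2+4\cos^3 t_i})/\sin(2t_i)$, and your reduction ``on $v^{-1}(0)$, $\dot v=0$ iff $B'=0$'' packages the converse direction correctly. (The constant in your formula $\dot v=3b_2B'/(4B)$ is inherited from a factor error in the displayed identity of Corollary~\ref{CorC3}; the correct relation is $B\dot v=(2ABx^2+B^2x+3B')v-4Q$, which gives $\dot v=12b_2B'/B$ on $v^{-1}(0)$ --- immaterial, since only the vanishing matters.)

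The one genuine problem is the case $b_1=0$ that you defer: it cannot be ``treated separately,'' because for $b_1=0$ the proposition is simply false. Indeed, any point of $v^{-1}(0)\cap\dot v^{-1}(0)$ must lie over a zero of $Q=3b_2^2\sin t$ in $(0,2\pi)$, i.e.\ over $t=\pi$; but $A(\pi)=0$, so $v(\pi,x)\equiv-4b_2\neq0$ and the intersection is empty. Thus your construction closes exactly on the set where the statement is true, namely $b_1\neq0$ (with $b_2>0$ from the standing hypothesis). The paper's own proof shares this blind spot: for $b_1=0$ its roots are $t_1=0$, $t_2=\pi$, the denominators $\sin(2t_i)$ vanish, and $t_1=0$ falls outside its case distinction $t_1\in(0,\pi/2)\cup(\pi/2,\pi)$. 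The statement should carry the hypothesis $b_1\neq0$; with that proviso your proof is correct.
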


\begin{proof}
In this case, the function $Q(t)$ in Corollary \ref{CorC3} is equal to 
\[
3 b_2 (b_2 \sin(t) - b_1 \cos(t)).
\]
Thus $Q(t)$ has exactly two roots $t_1 < t_2 = t_1+\pi$ in $[0,2\pi)$. In particular, $t_1 \in [0,\pi)$ and $t_2 \in [\pi, 2\pi).$
Now, if we replace $b_1$ by $b_2 \tan(t_i)$ in $v(t_i,x),\ i = 1,2$, and solve the resulting quadratic equations in $x$, we obtain the following solutions: \[x_i^\pm = \frac{b_2 \pm \sqrt{b_2^2+4 \cos(t_i)^3}}{\sin(2 t_i)},\ i = 1,2.\]
Let us show that at least one of them is positive. For this, we distinguish two cases:
\begin{itemize}
\item If $t_1 \in (0,\pi/2)$ then $x_1^+ > 0$.
\item If $t_1 \in (\pi/2,\pi)$ then $t_2 \in (3\pi/2, 2 \pi)$ and $x_2^->0$. 
\end{itemize}
Hence, we conclude that $t_1$ or $t_2$ determines a real positive solution of $v(t_1, x) = 0$ or of $v(t_2,x) = 0$, respectively.

Now, substituting $b_1 =b_2 \tan(t_i),\ i = 1,2$, in both $v(t,x)$ and $\dot v(t,x)$, we obtain that \[\dot v(t_i,x) = x \left(\frac{b_2-\sin(2 t_i) x}{\cos(t_i)} \right) v(t_i,x),\ i = 1,2.\] Thus, by the previous argument, $v^{-1}(0) \cap \dot v^{-1}(0) \neq \emptyset$.
\end{proof}

 \appendix \label{apen}

\section{Stability at Infinity in the Trigonometric Case}

In this appendix, we study the stability at infinity following \cite[Subsection 3.1]{BFG2} adapted to our case. A first observation is that by the change of variables $y=1/x$, we have that \eqref{eq:Abellinear2} for $x>0$ is equivalent to 
\begin{equation}\label{eq:Abelinfinito}
y'=-B-A y^{-1}.
\end{equation}
Therefore, for $y>0$ the phase portrait of the integral curves of \eqref{eq:Abelinfinito} is the same as  the 
phase plane of the planar system %(this approach is also used in \cite{OR,ORS})
\begin{equation}\label{eq:sistemainfinito}
\begin{cases}
t'(s)&=y\\
y'(s)&=-B(t)y-A(t).
\end{cases}
\end{equation}
The equilibrium points in $(t,y)\in[0,2\pi)\times\mathbb{R}$, are $(0,0)$ and $(t_A,0)$. The linearization matrix at $(0,0)$ is 
$$
\left(\begin{array}{cc}
0 & 1\\1 & -(b_0+b_2)
\end{array}
\right)
$$
with eigenvalues $\lambda_-<0<\lambda_+$. 
Since $(0,0)$ is a saddle point, there exists a unique analytic invariant unstable manifold tangent to the line $\langle (1,\lambda_+)\rangle$ at $(0,0)$. The branch of the manifold in $\{(t,y): t\in[0,2\pi],\ y>0 \}$ is defined by the solution of \eqref{eq:sistemainfinito} that satisfies
\[
\lim_{s\to-\infty} (t(s),y(s))=(0,0),\quad \lim_{s\to-\infty}
\frac{y'(s)}{t'(s)}=\lambda_+.\quad
\]
Hence, there exists a unique analytic solution of \eqref{eq:Abelinfinito}, $v_{\infty}(t)$, defined in an interval $(0,\alpha)$ such that
 
\[
\lim_{t\to 0^+}v_\infty(t)=0,\quad \lim_{t\to 0^+} \frac{\partial v_\infty}{\partial t}(t)=\lambda_+.
\]
From the parametric unstable manifold theorem (see, e.g., ),  this function is continuous with respect to  $a_0,b_0$.

 The linearization matrix at $(t_A,0)$ is 
 $$
 J(t_A,0)=\left(\begin{array}{cc}
 0 & 1\\-A'(t_A) & -B(t_A)
 \end{array}
 \right).
 $$
Taking into account that the trace and the determinant of $J(t_A,0)$ are $-B(t_A)>0$ and $A'(t_A)>0$ respectively, we have that $(t_A,0)$ is an unstable node or focus.

Since $(2\pi,0)$ is also a saddle point, there exists a unique analytic stable manifold tangent to the line $\langle (1,\lambda_-)\rangle$ at $(2\pi,0)$. The branch of the manifold in $\{(t,y): t\in[0,2\pi],\ y>0 \}$ is defined by the solution of \eqref{eq:sistemainfinito} that satisfies
\[
\lim_{s\to \infty} (t(s),y(s))=(2\pi,0),\quad \lim_{s\to \infty}
\frac{y'(s)}{t'(s)}=\lambda_-.\quad
\]
Thus, there exists a unique analytic solution of \eqref{eq:Abelinfinito}, $w_{\infty}(t)$, defined in an interval $(\beta,2\pi)$ such that
 
$$
\lim_{t\to 2\pi^-}w_\infty(t)=0,\quad \lim_{t\to 2\pi^-} \frac{\partial w_\infty}{\partial t}(t)=\lambda_-.
$$
Since the stable and unstable invariant manifolds are unique, $v_\infty(t)$ or $w_\infty(t)$ are defined in $(0,2\pi)$. Moreover, both are defined in $(0,2\pi)$ if and only if $v_\infty(t)=w_\infty(t)$. Note that the set of bounded solutions of \eqref{eq:Abellinear2} is limited by either $v_\infty(t)$ or $w_\infty(t)$.

We say $(a_0,b_0,b_1,b_2)$ is a bifurcation value at infinity if $v_\infty(t)=w_\infty(t)$. In this case a Poincar\'e map of \eqref{eq:Abelinfinito} is defined for $y>0$ close to zero as
$$
P(y)=v(2\pi,y),
$$
where $v(0,y)$ is the solution of \eqref{eq:Abelinfinito} such that $v(0,y)=y.$
The stability of the solution $v_\infty(t)=w_\infty(t)$ is established in the following result.
\begin{prop}\label{prop:estabilidadinfinito}
Assume $(a_0,b_0,b_1,b_2)$ is a bifurcation value at infinity. Then, for these values of the parameters, 
\[\sgn(P(y)-y)=-\sgn(b_0+b_2),\]
for every $y> 0$ small enough. 
\end{prop}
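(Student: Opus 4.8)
The plan is to interpret the bifurcation solution $v_\infty=w_\infty$ as a homoclinic loop of the planar system \eqref{eq:sistemainfinito} and to read off the stability of its return map from the saddle quantity at the origin. Regarding $t$ modulo $2\pi$, so that the saddles at $(0,0)$ and $(2\pi,0)$ become a single point on the cylinder $(\mathbb{R}/2\pi\mathbb{Z})\times\mathbb{R}$, the equality $v_\infty(t)=w_\infty(t)$ says exactly that the branch of the unstable manifold leaving the saddle into $\{y>0\}$ returns to it as the branch of the stable manifold, producing a homoclinic loop $\Gamma$ which, apart from the saddle, lies entirely in $\{y>0\}$ and encircles the unstable node/focus $(t_A,0)$. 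The map $P(y)=v(2\pi,y)$ is then precisely the first-return map of $\Gamma$ along the transversal $\{t=0\}$, parametrized by $y>0$ near $0$; since $\Gamma$ meets $\{t=0\}$ at the saddle $y=0$, the sign of $P(y)-y$ records whether nearby orbits approach or recede from the loop.

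First I would record the local data at the saddle. The linearization at $(0,0)$ computed above has trace $-(b_0+b_2)$ and determinant $-1$, so its eigenvalues satisfy $\lambda_+\lambda_-=-1$ and $\lambda_++\lambda_-=-(b_0+b_2)$, with $\lambda_-<0<\lambda_+$. Thus the saddle quantity is $\sigma_0=\lambda_++\lambda_-=-(b_0+b_2)$ and the hyperbolicity ratio is $r:=-\lambda_-/\lambda_+=1/\lambda_+^2$; consequently $r>1\iff\lambda_+^2<1\iff\sigma_0<0\iff b_0+b_2>0$.

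Next I would factor $P$ through the saddle as a composition of a passage (Dulac) map $D$ in a small neighbourhood of the saddle and a regular transition map $R$ along the compact arc of $\Gamma$ lying in $\{y>0\}$ away from all equilibria. The transition map $R$ is an orientation-preserving local diffeomorphism with strictly positive derivative at the loop, so it cannot affect the sign determination. The Dulac map near the hyperbolic saddle has, to leading order, the form $D(\eta)=c\,\eta^{\,r}(1+o(1))$ with $c>0$, whence $P(y)=K\,y^{\,r}(1+o(1))$ with $K>0$. Because $K>0$ and $r\neq1$, for $y>0$ small the comparison of $K y^{\,r}$ with $y$ is governed entirely by the exponent: $P(y)<y$ when $r>1$ and $P(y)>y$ when $r<1$. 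Therefore $\sgn(P(y)-y)=\sgn(1-r)=\sgn(\sigma_0)=-\sgn(b_0+b_2)$, which is the assertion (and under the standing hypothesis $b_0+b_2>0$ it reduces to $P(y)<y$, i.e. a stable loop). This is the classical Andronov--Leontovich stability criterion for a homoclinic loop, which I would invoke to make the passage-map asymptotics rigorous and to exclude higher-order degeneracies (note $r=1$ would require $b_0+b_2=0$, which is ruled out here).

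The main obstacle is precisely the control of this passage through the saddle. One cannot extract the sign from a single variational multiplier, because linearizing \eqref{eq:Abelinfinito} about $v_\infty$ produces the integrand $A(t)/v_\infty(t)^2$, which near $t=0$ behaves like $-1/(\lambda_+^2\,t)$ (using $v_\infty(t)\sim\lambda_+t$ and $A(t)\sim -t$) and so integrates to a logarithmic divergence, with the same phenomenon at $t=2\pi$. Thus the essential work is to establish the $c\,\eta^{\,r}$ asymptotics of the Dulac map from a $C^1$-linearization near the saddle (or directly from the manifold slopes $\lambda_\pm$), and to pin down the orientation and side conventions — in particular to verify that the transversal $\{t=0\}$ with $y>0$ small meets the side of $\Gamma$ on which $P$ is defined — so that the stability of the loop translates into the stated sign of $P(y)-y$.
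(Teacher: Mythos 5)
Your proposal is correct and follows essentially the same route as the paper: both pass to the cylinder, identify the bifurcation solution as a homoclinic (saddle) loop, and read off the stability of the return map from the trace $-(b_0+b_2)$ of the linearization at the saddle, the paper by citing the classical criterion (Chow--Hale, Section 10.3) and you by sketching its proof via the Dulac-map asymptotics $P(y)\sim K y^{r}$ with $r=-\lambda_-/\lambda_+$. The extra detail you supply is a correct unpacking of the cited result rather than a different argument.
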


\begin{proof}

As the functions $A,B$ are $2\pi$-periodic, the system can be considered inside the cylinder. 
Then the solution $v_\infty$ is a homoclinic loop, and its stability is given by the sign of the trace of the linearization matrix at $(0,0)$ (see for instance  Section 10.3 of \cite{CH}).

\end{proof}

\begin{prop}
There exists a neighbourhood $U$  of $(a_0,b_0)=(0,0)$ such that, for each $(a_0,b_0)\in U$, $u_{\infty}(t,a_0,b_0)$ is defined for all $t>0$ and is continuous. Moreover,
$$
\lim_{t\to\infty}u_{\infty}(t,0,0)=0.
$$	
\end{prop}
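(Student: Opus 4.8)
The plan is to pass to the coordinate $y=1/x$, where \eqref{eq:Abellinear2} becomes \eqref{eq:Abelinfinito} and its integral curves are those of the planar system \eqref{eq:sistemainfinito}, and to identify $u_\infty$ with the reciprocal of the unstable branch $v_\infty$ constructed above. Indeed, a solution of \eqref{eq:Abellinear2} with finite initial value $x_0$ lies below $1/v_\infty$ at $t=0^+$ (since $v_\infty(0^+)=0$) and, as two solutions of \eqref{eq:Abelinfinito} cannot cross, it stays below it; letting $x_0\to\infty$ yields $u_\infty(t,a_0,b_0)=1/v_\infty(t,a_0,b_0)$ on every interval where $v_\infty>0$. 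By the parametric unstable manifold theorem quoted above, $v_\infty$ is continuous in $(a_0,b_0)$ and smooth in $t$, so $u_\infty$ is jointly continuous wherever $v_\infty>0$. The statement thus reduces to showing that $v_\infty(t)>0$ for all $t>0$ when $(a_0,b_0)$ is near $(0,0)$, and that $v_\infty(t)\to+\infty$ at $(0,0)$.

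First I would check that $(0,0)$ is not a bifurcation value at infinity, so that $v_\infty$ does not run back into the saddle. If it were, then $v_\infty=w_\infty$ and Proposition~\ref{prop:estabilidadinfinito} would give $\sgn(P(y)-y)=-\sgn(b_0+b_2)<0$ for small $y>0$; but the solutions realising this Poincar\'e map are defined on $[0,2\pi]$, so in the $x$ variable this reads $u(2\pi,x)>x$ for large admissible $x$, contradicting $u(2\pi,x)<x$ of Proposition~\ref{prop:nosolutions}. Hence $v_\infty\neq w_\infty$; and since $u(2\pi,x)<x$ forces the period map in $y$ to satisfy $P(y)>y$ for small $y>0$—so infinity is repelling and trajectories leave the saddle into the interior—it is the outgoing branch $v_\infty$ that survives across the period, with $v_\infty>0$ on $(0,2\pi]$. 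Equivalently, at $(0,0)$ no positive solution of \eqref{eq:Abellinear2} blows up on $[0,2\pi]$.

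By periodicity the same no-blow-up holds on every interval $[2\pi k,2\pi(k+1)]$, so $v_\infty(t)>0$—equivalently $u_\infty(t)<\infty$—for all $t>0$. Since $u_\infty=1/v_\infty$ is then a bounded solution, Proposition~\ref{prop:nosolutions} gives $u_\infty(2\pi(k+1))=u(2\pi,u_\infty(2\pi k))<u_\infty(2\pi k)$; the decreasing sequence $u_\infty(2\pi k)$ converges to some $\ell\ge0$, which by continuity is a fixed point of $x\mapsto u(2\pi,x)$, and as \eqref{eq:Abellinear2} has no positive limit cycles at $(0,0)$ we get $\ell=0$. Controlling the flow on each period then upgrades this to $\lim_{t\to\infty}u_\infty(t,0,0)=0$.

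The hard part is to propagate definedness for all $t>0$ to a full neighbourhood $U$ of $(0,0)$. The obstruction is that the strict contraction $u(2\pi,x)<x$ holds only at $(0,0)$—limit cycles bifurcate arbitrarily near it—so the iterates $u_\infty(2\pi k)$, which decrease to $0$ at $(0,0)$, need no longer decrease nearby and could climb into the range where the solution blows up on the half-period where $A>0$. The robust ingredient I would exploit is that for $|a_0|$ small $A(t)$ stays strictly negative on a fixed subinterval of $(0,\pi)$, so $A(t)x^3$ drives large $x$ down super-linearly and produces, uniformly in $U$, a ceiling $u_\infty(t)\le M(t)$ over the first half of each period; by continuity of the flow this ceiling persists over the second half for $(a_0,b_0)$ close enough to $(0,0)$, ruling out finite-time blow-up on a single period. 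To conclude for all $t>0$ one must then show the iterates $u_\infty(2\pi k)$ stay below this ceiling for every $k$: as the Poincar\'e map is $\mathcal{C}^1$-close to the strictly contracting map at $(0,0)$, shrinking $U$ keeps the iterates bounded, so $v_\infty(t)>0$ on all of $(0,\infty)\times U$. Quantifying this last uniformity—equivalently, bounding the homoclinic connection at infinity as suggested after Theorem~\ref{theo:doslineal}—is the crux.
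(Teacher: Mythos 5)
Your first three paragraphs are essentially the paper's own argument and are sound: the identification $u_\infty=1/v_\infty$, the exclusion of a homoclinic connection at infinity for $a_0=b_0=0$ by playing Proposition~\ref{prop:estabilidadinfinito} against Proposition~\ref{prop:nosolutions} (your direct contradiction $u(2\pi,x)>x$ for large admissible $x$ is in fact cleaner than the paper's ``two attractors force an interior cycle'' phrasing), and the limit $u_\infty(t,0,0)\to 0$ via the decreasing iterates of the period map, whose only fixed point is $0$. (One small soft spot: ``$u(2\pi,x)<x$ forces $P(y)>y$ for small $y$'' presupposes that $P$ is defined for small $y$, which is part of what is being proved; the clean version is that if the surviving separatrix were $w_\infty$, with $w_\infty(0^+)=y_0>0$, then $P(y)\to w_\infty(2\pi)=0$ as $y\to y_0^+$ while $P(y)>y>y_0$, a contradiction.)

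The genuine gap is the one you flag yourself in the last paragraph (``the crux''), and it stems from a misdiagnosis. Once $v_\infty(\cdot,a_0,b_0)>0$ on the single period $(0,2\pi]$, every solution of \eqref{eq:Abelinfinito} with $y(0)>0$ starts above $v_\infty$ (since $v_\infty(0^+)=0$) and cannot cross it, hence stays positive up to $t=2\pi$; equivalently, \emph{no} positive solution of \eqref{eq:Abellinear2} with finite initial datum blows up on $[0,2\pi]$, however large the datum. By $2\pi$-periodicity this repeats on every subsequent period, so there is no ``range where the solution blows up'' for the iterates $u_\infty(2\pi k)$ to climb into, and definedness of $u_\infty$ for all $t>0$ follows at once for every parameter value at which $v_\infty>0$ on one period. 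Positivity of $v_\infty$ on $(0,2\pi]$ is an open condition in $(a_0,b_0)$: near $t=0$ one has $v_\infty(t,a_0,b_0)\sim\lambda_+t$ uniformly by the parametric unstable manifold theorem, and on $[\delta,2\pi]$ one has $v_\infty(\cdot,0,0)\ge m>0$ together with continuous dependence on parameters. This gives the neighbourhood $U$ directly, with no need for $\mathcal{C}^1$-closeness of Poincar\'e maps, ceilings produced by the sign of $A$, or bounds on the homoclinic connection at infinity; it is the (tersely stated) content of the paper's ``$v_\infty$ is continuous with respect to $a_0,b_0$ \dots\ we conclude''. As written, your route for this step is both incomplete and aimed at an obstruction that is not there.
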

	
\begin{proof}
As a direct consequence of Proposition~\ref{prop:estabilidadinfinito}
 for $a_0=b_0=0$ there is no periodic solution at infinity, 
otherwise the stability would be opposite to 
the stability of the origin, and consequently
there would be a limit cycle, in contradiction with Proposition~\ref{prop:nosolutions}.
In particular, $v_\infty$ is defined in $(0,\infty)$,  
$v_\infty(t)\to+\infty$ as $t\to \infty$, and is continuous with respect to $a_0,b_0$. Taking into account that $u_{\infty}=1/v_{\infty}$, we conclude.
\end{proof}

\section*{Acknowledgments}

%\thanks{
The authors are partially supported by Junta de Extremadura/FEDER grant number IB18023. the first two authors are also partially supported by Junta de Extremadura/FEDER grant number GR21056 and by grant number PID2020-118726GB-I00 funded by MCIN/
AEI/10.13039/501100011033 and by FEDER Funds "A way of making Europe". The last author is also partially supported by Grant GR21055 funded by Junta de Extremadura (Spain)/FEDER funds and by Grant PGC2018-096446-B-C21 funded by by MCIN/AEI/10.13039/501100011033 and by "ERDF A way of making Europe".
%}

\end{document}